\newcommand{\Cph}{C_{\textrm{ph}} }
\newcommand{\Heis}{\mathcal{H}}
 \def\RR{{\mathbb R}}  \def\TT{{\mathbb T}}
 \def\ZZ{{\mathbb Z}}
  \def\cH{\mathcal{H}}
\def\cE{\mathcal{E}}    \def\cW{\mathcal{W}}
\def\cF{\mathcal{F}}  \def\cL{\mathcal{L}} \def\cR{\mathcal{R}}
\newcommand{\bbR}{\mathbb{R}}
\newcommand{\bbZ}{\mathbb{Z}}
\newcommand{\bbT}{\mathbb{T}}
\newcommand{\Td}{\mathbb{T}^d}
\newcommand{\Rd}{\mathbb{R}^d}
\newcommand{\Zd}{\mathbb{Z}^d}
\newcommand{\FF}{\mathcal{F}}
\newcommand{\tFF}{\tilde\FF}
\newcommand{\FFcub}{\FF^{cu}_{bran}}
\newcommand{\FFcsb}{\FF^{cs}_{bran}}
\newcommand{\tFFcub}{\tFF^{cu}_{bran}}
\newcommand{\tFFcsb}{\tFF^{cs}_{bran}}
\newcommand{\tM}{\tilde M}
\newcommand{\Es}{E^s}
\newcommand{\Ec}{E^c}
\newcommand{\Eu}{E^u}
\newcommand{\Ecu}{E^{cu}}
\newcommand{\Ecs}{E^{cs}}
\newcommand{\Ws}{\cW^s}
\newcommand{\Wu}{\cW^u}
\newcommand{\Wcs}{\cW^{cs}}
\newcommand{\inv}{^{-1}}
\newtheorem{thm}{Theorem}[section]
\newtheorem{cor}[thm]{Corollary}
\newtheorem{lemma}[thm]{Lemma}
\newtheorem{prop}[thm]{Proposition}
\newtheorem{conjecture}[thm] {\bf Conjecture}
\newtheorem*{teo*}{Theorem}
\newtheorem*{prop*}{Proposition}
\newtheorem*{cor*}{Corollary}
\newtheorem*{goal*}{Goal}
\newtheorem*{af}{Claim}
\newcommand{\bi}{\begin{itemize}}
\newcommand{\ei}{\end{itemize}}
\theoremstyle{definition}
\theoremstyle{remark}
\newtheorem*{remark} {\bf Remark}
\newtheorem*{assumption} {\bf Assumption}
\newtheorem*{notation} {\bf Notation}
\newcommand{\dem}{\vspace{.05in}{\sc\noindent Proof.  }}
\newcommand{\lqqd}{\par\hfill {$\Box$} \vspace*{.05in}}
\newcommand{\eps}{\varepsilon}
\newcommand{\en}{\subset}
\author[A. Hammerlindl]{Andy Hammerlindl}
\address{The University of Sydney, School of Mathematics and Statistics, University of Sydney, NSW 2006, Australia.} \urladdr{www.maths.usyd.edu.au/u/andy/} \email{andy@maths.usyd.edu.au}
\author[R. Potrie]{Rafael Potrie}
\address{CMAT, Facultad de Ciencias, Universidad de la Rep\'ublica, Uruguay}
\urladdr{www.cmat.edu.uy/$\sim$rpotrie}
\email{rpotrie@cmat.edu.uy}
\title[Partial hyperbolicity in 3-nilmanifolds]{Pointwise partial hyperbolicity in 3-dimensional nilmanifolds}
\thanks{A.H.~was partially supported by CNPq (Brazil). R.P.~was partially supported by CSIC group 618, Fondo Clemente Estable FCE-3-2011-1-6749 and the Balzan Research Project of J.Palis.}
\begin{document}
\begin{abstract}
We show the existence of a family of manifolds on which all (pointwise or
absolutely) partially hyperbolic systems are dynamically coherent.
This family is the set of 3-manifolds with nilpotent, non-abelian fundamental
group.
We further classify the partially hyperbolic systems on these manifolds up to
leaf conjugacy.
We also classify those systems on the 3-torus which do not have an attracting
or repelling periodic 2-torus.
These classification results allow us to prove some dynamical consequences,
including existence and uniqueness results for measures of maximal entropy and
quasi-attractors.
%We reprove the results of \cite{Hammerlindl,HNil} for pointwise
%strong partially hyperbolic diffeomorphisms. This must take into
%account recent examples of \cite{HHU} which are not dynamically
%coherent. This allows us to obtain a classification of strong
%partially hyperbolic systems in $3$-dimensional manifolds with (virtually) nilpotent fundamental group as
%well as obtaining some dynamical consequences such as uniqueness of attractors for the non-toral nilmanifold case.
\bigskip

\noindent {\bf Keywords:} Partial hyperbolicity (pointwise),
Dynamical Coherence, Global Product Structure, Leaf conjugacy.

%\medskip

\noindent {\bf MSC 2000:} 37C05, 37C20, 37C25, 37C29, 37D30,
57R30.
\end{abstract}

\maketitle

%{ \small \tableofcontents}

\section{Introduction}\label{SectionIntroduction}

Since the beginning of its study, there have been two competing candidates
for the definition of partial hyperbolicity.  In both definitions, there is a
continuous splitting of the tangent bundle
\[
    TM = \Es \oplus \Ec \oplus \Eu
\]
invariant under the derivative $Df$ of the diffeomorphism $f:M \to M$
and where the strong expansion of the unstable bundle $\Eu$ and the strong
contraction of the stable bundle $\Es$ dominate any expansion or contraction on
the center $\Ec$.
The distinction between the two definitions is in the exact nature of the
domination.
If (for some Riemannian metric on $M$) the inequalities
\[
    \|Df v^s\| < \|Df v^c\| < \|Df v^u\| \quad \text{and} \quad
    \|Df v^s\| < 1 < \|Df v^u\|
\]
are satisfied for each point $x \in M$ and
%If at each point $x \in M$, there are values $\sigma(x) < 1 < \mu(x)$
%depending on $x$ such that
%
%    |Tf v^s| < \sigma{x} < |Tf v^c| < \mu{x} < |Tf v^u|
%
for unit vectors
$v^s \in \Es(x)$,
$v^c \in \Ec(x)$, and
$v^u \in \Eu(x)$,
then $f$ is \emph{pointwise} (or \emph{relatively}) partially hyperbolic.
%If the functions $\sigma,\mu:M \to \bbR$ can be taken as constant on $M$, then $f$
%is \emph{absolutely} partially hyperbolic.
If $f$ also satisfies the condition that
there are global constants $\lambda,\hat \gamma,\gamma,\mu$
independent of $x$ such that
\[
    \|Df v^s\| < \lambda < \hat \gamma < \|Df v^c\| < \gamma < \mu < \|Df v^u\|
\]
then $f$ is \emph{absolutely} partially hyperbolic. See \cite{HPS} where both notions are discussed.

Only recently have important differences between the two notions come to
light.

In every (pointwise or absolutely) partially hyperbolic system, there are
unique foliations $\Wu$ and $\Ws$ tangent to $\Eu$ and $\Es$ (\cite{HPS}), but there is not
always a foliation tangent to $\Ec$ (see \cite{BuW2}).
A partially hyperbolic diffeomorphism $f$ is \emph{dynamically coherent} if
there are $f$-invariant foliations tangent to $E^{cs}=\Es \oplus \Ec$, $E^{cu}=\Ec \oplus \Eu$,
(and consequently also to $\Ec$).

Brin, Burago, and Ivanov proved that every absolutely partially hyperbolic
system on the 3-torus is dynamically coherent \cite{BBI2}.  Inspired by this
result, Hertz, Hertz, and Ures attempted to extend it to all pointwise
partially hyperbolic systems, but this line of research led them to discover a
counterexample.  There are pointwise partially hyperbolic diffeomorphisms on
$\bbT^3$ for which there is no foliation tangent to the center direction
\cite{HHU}.  Hertz, Hertz, and Ures further asked if there are any
manifolds on which all pointwise partially hyperbolic systems are
dynamically coherent.  In this paper, we answer this question in the
affirmative.

\begin{thm} \label{alldyn}
    Suppose $M$ is a 3-manifold with (virtually) nilpotent fundamental group.
    If $M$ is not finitely covered by $\TT^3$, then every pointwise partially hyperbolic
    diffeomorphism on $M$ is dynamically coherent.
\end{thm}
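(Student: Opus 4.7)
The plan is to lift $f$ to the universal cover, produce branching foliations via Burago--Ivanov, and rule out branching using the non-abelian geometry of the Heisenberg group.

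First, I reduce to a concrete model. A closed 3-manifold with virtually nilpotent fundamental group, not finitely covered by $\TT^3$, is finitely covered by $N/\Gamma$ where $N$ is the 3-dimensional Heisenberg Lie group and $\Gamma$ a cocompact lattice. Since any invariant foliation tangent to $\Ecs$ or $\Ecu$ on the finite cover is automatically $\Gamma$-equivariant (by uniqueness of the bundles $\Ecs$, $\Ecu$ as continuous distributions), dynamical coherence on the cover implies it on $M$. So I assume $M=N/\Gamma$ and lift $f$ to $\tilde f\colon N\to N$; the induced action on the abelianization $N/Z(N)\cong\RR^2$ is a linear map $A\in\mathrm{GL}(2,\ZZ)$, where $Z(N)=[N,N]$ is the one-dimensional center. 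Next, apply the Burago--Ivanov branching foliation theorem to obtain $f$-invariant branching foliations $\FFcsb$ and $\FFcub$ tangent to $\Ecs$ and $\Ecu$, with complete 2-dimensional leaves. Lifting yields $\Gamma$-equivariant, $\tilde f$-invariant branching foliations $\tFFcsb$, $\tFFcub$ on $N$.

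The heart of the proof is to show these do not branch. I would first show, by the usual Burago--Ivanov type arguments, that each leaf of $\tFFcub$ is a properly embedded topological plane separating $N\cong\RR^3$ into two half-spaces. Then, using partial hyperbolicity of $\tilde f$ together with $\Gamma$-equivariance, I would show that the projection of each leaf to the abelianization $\RR^2$ stays in a bounded neighborhood of a single line, which must be the unstable eigendirection of $A$. Consequently each leaf of $\tFFcub$ lies at finite Hausdorff distance from some left coset of the 2-dimensional subgroup $H^{cu}\subset N$ spanned by $Z(N)$ and a lift of the unstable eigendirection. Cosets of $H^{cu}$ are totally ordered by the quotient $N/H^{cu}\cong\RR$, so two leaves asymptotic to distinct cosets cannot meet, while two asymptotic to the same coset must coincide by monotonicity of the branching foliation along the transverse direction. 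The argument for $\tFFcsb$ is symmetric; once both are honest foliations $\Wcs$, $\Wcu$, their transverse intersection yields the center foliation tangent to $\Ec$.

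The main obstacle is precisely this non-branching step. Pointwise partial hyperbolicity alone does not suffice on $\TT^3$, as the Hertz--Hertz--Ures example shows, so the essential new input must be the Heisenberg geometry, namely that $H^{cu}$ provides a canonical codimension-one subgroup with a well-defined transverse coordinate that the abelian torus lacks. Establishing rigorously that leaves of $\tFFcub$ asymptotically align with cosets of $H^{cu}$, using only a pointwise cone condition together with equivariance under $\Gamma$, is where the technical work will concentrate.
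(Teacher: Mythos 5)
The central gap is in your non-branching step. You assert that two leaves of $\tFFcub$ lying within bounded Hausdorff distance of the \emph{same} coset of $H^{cu}$ ``must coincide by monotonicity of the branching foliation along the transverse direction.'' That is exactly the point that can fail: distinct leaves of a branching foliation are tangent to the same distribution, never topologically cross, and can perfectly well stay within bounded distance of one and the same coset while remaining distinct --- this is precisely the configuration in the Hertz--Hertz--Ures example on $\TT^3$, where the branching center-stable leaves lie at bounded distance from translates of a single linear plane and yet the foliation genuinely branches. So ``leaves near cosets'' plus the total order on $N/H^{cu}\cong\RR$ cannot by itself rule out branching; a dynamical expansion argument transverse to the would-be $cs$-leaves must enter. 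In the paper, after the classification of Reebless foliations (Proposition \ref{nilfolns}, via Theorem \ref{nearfol}) gives that leaves of $\tFFcsb$ have uniformly bounded $\pi^u$ (Lemma \ref{csbdd}, whose proof via Lemma \ref{cubdd} is where the Heisenberg structure is really used, forcing $b=0$ in $\pi=a\pi^s+b\pi^u$), one still needs global product structure of $\tFFcsb$ with $\tilde\cW^u$ (Lemma \ref{halfgps}) and the fact that long unstable arcs have large $\pi^u$-displacement (Lemma \ref{endsunbdd}): two branching leaves through a point are joined by an unstable arc whose forward iterates grow unboundedly in $\pi^u$ while remaining $\pi^u$-bounded between the two leaves, a contradiction. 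Your outline contains no analogue of these last two ingredients, and without them the argument does not close.

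Two further problems sit in your reduction. First, the claim that a closed $3$-manifold with virtually nilpotent fundamental group not finitely covered by $\TT^3$ is finitely covered by $\Heis/\Gamma$ is false as a purely topological statement (lens spaces, $S^2\times S^1$, connected sums); one needs the hypothesis that $M$ carries a partially hyperbolic diffeomorphism together with Parwani's theorem, and then a Malcev--Waldhausen--Moise argument to identify the finite cover smoothly with a nilmanifold and to lift $f$, as the paper does. Second, your descent from the finite cover to $M$ is gapped: deck-invariance of the bundles $\Ecs,\Ecu$ does not imply deck-invariance of an invariant foliation tangent to them --- a deck transformation $\alpha$ could a priori send the foliation $\cW$ to a different invariant foliation tangent to the same bundle. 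The paper resolves this by proving \emph{uniqueness} of the $\hat f^k$-invariant foliation tangent to $\Ecs$ (part of Theorem \ref{nildyn}) and noting that some power $\hat f^k$ commutes with each deck transformation of the finite cover, whence $\alpha(\cW)=\cW$ and $\cW$ descends; in your scheme you would need to extract this uniqueness from the non-branching argument as well.
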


This builds on results, obtained independently in \cite{HNil} and \cite{Pw},
in the absolute partially hyperbolic setting.
% TODO: Handle torus case in following sentence.
%It implies in particular, by a
%celebrated result of Gromov, that if the fundamental group of $M$ is of
%polynomial growth, then the partially hyperbolic diffeomorphism is dynamically
%coherent. 

Only certain 3-manifolds can support partially hyperbolic diffeomorphisms.
In particular, if $\pi_1(M)$ is (virtually) nilpotent and $M$ supports partially hyperbolic
diffeomorphisms, then $M$ is (finitely covered by) a circle bundle over a
2-torus \cite{Pw}.
%In fact, the result of \cite{Pw} can be improved to get
%that after one lifts to a finite cover which makes the bundles of the partial
%hyperbolicity orientable, the resulting manifold is already such a bundle (see
%the discussion at the end of section \ref{SectionNilm}).
Therefore, Theorem
\ref{alldyn} will follow as a consequence of:

\begin{thm} \label{nildyn}
    If $M$ is a non-trivial circle bundle over $\bbT^2$ and $f:M \to M$ is
    pointwise partially hyperbolic, then $f$ is dynamically coherent and
    there is a unique $f$-invariant foliation tangent to each of the
    bundles.  Further, the bundle projection $\pi:M \to \bbT^2$ may be chosen
    so that every
    leaf of the center foliation is a fiber of the bundle. 
\end{thm}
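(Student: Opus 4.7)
The plan is to work in the universal cover $\tM \cong \bbR^3$, which carries the left-invariant metric of a non-abelian nilpotent (Heisenberg-type) Lie group since $M$ is a non-trivial circle bundle over $\bbT^2$. The center $Z$ of $\pi_1(M)$ is infinite cyclic and acts by translations along the fiber direction, with quotient $\pi_1(M)/Z \cong \bbZ^2$ the fundamental group of the base. Lifting $f$ to $\tilde f:\tM \to \tM$, the induced map $f_*$ on $\pi_1(M)$ preserves the characteristic subgroup $Z$ and descends to an automorphism of $\bbZ^2$ which, by prior results on partially hyperbolic systems on 3-dimensional nilmanifolds \cite{HNil,Pw}, is hyperbolic, with stable and unstable eigenlines matching $\Es$ and $\Eu$ at the homological level. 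This gives large-scale rigidity for $\tilde f$.

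The principal technical step, and the main obstacle, is to establish that the lifted strong foliations $\widetilde{\Ws}$ and $\widetilde{\Wu}$ are quasi-isometrically embedded in $\tM$: each leaf is a properly embedded plane whose intrinsic distance is comparable to the ambient distance on large scales. In the absolute setting this follows from the cone-field arguments of Brin--Burago--Ivanov, but in the pointwise setting the local control is too weak, so one must argue globally. The idea is that any large excursion of a strong leaf into the fiber direction $Z$ is incompatible with the sub-linear growth of $Z$-cosets in the Heisenberg geometry combined with the uniform contraction/expansion produced by iterating $\tilde f$. This is precisely where the non-abelian structure of $\pi_1(M)$ rules out the pathologies exhibited on $\bbT^3$ in \cite{HHU}. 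Once quasi-isometry is in hand, Global Product Structure --- every leaf of $\widetilde{\Ws}$ meets every leaf of $\widetilde{\Wu}$ in exactly one point --- follows by standard arguments.

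One next applies the Burago--Ivanov branching-foliation machinery to produce $f$-invariant branching foliations $\FFcub$ and $\FFcsb$ tangent to $\Ecu$ and $\Ecs$. Their lifts intersect well thanks to Global Product Structure, and the rigidity of $\tilde f$ combined with the nilpotent geometry forces the absence of branching: two leaves of $\tFFcsb$ through a common point carry the same strong-stable leaf and, after being projected along the fiber direction, are seen to coincide. Hence $\FFcsb$ and $\FFcub$ are honest foliations $\Wcs, \Wcu$, giving dynamical coherence with $\Wc = \Wcs \cap \Wcu$; uniqueness of these foliations tangent to $\Ecs, \Ecu, \Ec$ follows from the same Global Product Structure.

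Finally, each center leaf is tangent to the one-dimensional bundle $\Ec$, and lifting to $\tM$ one argues that it has bounded projection to the $\bbZ^2$-quotient: the action of $f_*$ on this quotient is hyperbolic with its stable and unstable directions already exhausted by $\Es$ and $\Eu$, so the $f$-invariant center direction can only project to a bounded subset. Consequently every center leaf stays within bounded distance of a single $Z$-coset and descends in $M$ to a compact circle homotopic to a fiber of the original bundle. The family of these circles then defines a new continuous circle fibration $\pi: M \to \bbT^2$ whose fibers coincide exactly with the leaves of the center foliation.
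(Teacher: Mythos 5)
There is a genuine gap, and it sits exactly at what you yourself call ``the principal technical step.'' Your plan is to prove quasi-isometry of $\widetilde{\Ws}$ and $\widetilde{\Wu}$ in the Heisenberg cover by a global argument about ``sub-linear growth of $Z$-cosets,'' and then to get Global Product Structure and absence of branching ``by standard arguments.'' Neither half works as stated. The quasi-isometry claim is only asserted: no mechanism is given that converts pointwise contraction/expansion estimates (which vary from point to point and cannot be iterated uniformly along a leaf) into a global bound on excursions in the fiber direction. More importantly, even granting quasi-isometry, the ``standard arguments'' you would invoke to pass from quasi-isometric strong foliations to coherence/product structure are Brin's, and these genuinely require \emph{absolute} partial hyperbolicity; this is precisely the step the paper flags as unavailable in the pointwise case, and the $\TT^3$ counterexample of \cite{HHU} shows that some additional input specific to the manifold is indispensable. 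Your proposal never isolates what that input is. (As a side issue, your formulation of Global Product Structure --- every leaf of $\widetilde{\Ws}$ meets every leaf of $\widetilde{\Wu}$ in one point --- is dimensionally impossible for two one-dimensional foliations of a $3$-manifold; the relevant statement is between leaves of the branching center-stable foliation and unstable leaves.)

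The missing idea, which is the actual content of the paper's proof, is a classification of Reebless $C^{1,0+}$ (and branching) foliations on $\Heis/\Gamma$: every such foliation is almost aligned with an algebraic foliation $\FF_\pi$ given by level sets of a Lie group homomorphism $\pi:\Heis\to\bbR$ (Proposition \ref{nilfolns}, a $C^{1,0+}$ extension of Plante's theorem, proved in the appendices). Applying this, via Burago--Ivanov, to the branching foliations $\FFcsb$ and $\FFcub$ yields that leaves of $\tFFcub$ have bounded image under the stable projection $\pi^s$ and leaves of $\tFFcsb$ have bounded image under $\pi^u$ (Lemmas \ref{cubdd}, \ref{csbdd}), while long unstable arcs must have unboundedly growing $\pi^u$-displacement (Lemma \ref{endsunbdd}); branching of $\tFFcsb$ then contradicts these two facts after iterating, with no quasi-isometry of the strong foliations or Brin-type argument needed. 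This foliation classification is also what explains why the nilmanifold differs from $\TT^3$: there is no foliation of type $\FF_\pi$ that can play the role of the attracting/repelling torus tangent to $\Ec\oplus\Eu$, whereas on $\TT^3$ there is. Your sketch of the no-branching step (``projected along the fiber direction, the two leaves are seen to coincide'') and of the compactness of center leaves would both need to be replaced by arguments of this kind; as written they do not constitute proofs.
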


Beyond dynamical coherence, we also consider the classification problem for
pointwise partially hyperbolic systems.  In studying Anosov flows, the
natural notion of equivalence is topological equivalence.  Two flows
are topologically equivalent if there is a homeomorphism which maps
orbits of one flow to orbits of the other and which preserves the orientations
of the orbits.

The natural notion of equivalence for partially hyperbolic systems is leaf
conjugacy, as introduced in \cite{HPS}.  Two dynamically coherent partially
hyperbolic diffeomorphisms $f$ and $g$ are \emph{leaf conjugate} if there is a
homeomorphism $h$ which maps each center leaf $L$ of $f$ to a center
leaf of $g$ and $h f (L) = g h (L)$.

For every diffeomorphism $f$ of the 3-torus, there is a unique linear map
$A_f:\bbR^3 \to \bbR^3$ such that $A_f(\bbZ^3) = \bbZ^3$ and, viewing $\bbT^3$ as
the quotient $\bbR^3/\bbZ^3$, the quotiented map $A_f:\bbT^3 \to \bbT^3$ is
homotopic to $f$.  We call $A_f$ the \emph{linear part} of $f$.
If $f$ is absolutely partially hyperbolic, then $A_f$ has a partially
hyperbolic splitting and the diffeomorphisms $f$ and $A_f$ are leaf conjugate
\cite{Hammerlindl}.

If $f$ is pointwise partially hyperbolic, it may not have a center foliation (\cite{HHU}),
in which case, it is not possible to define a leaf conjugacy. However, in \cite{Pot} it was shown that 
there is only one obstruction to dynamical coherence in $\TT^3$ and here we show that
this is also the only obstruction to extending the classification.

\begin{thm} \label{torusconj}
    Let $f:\bbT^3 \to \bbT^3$ be pointwise partially hyperbolic. Then, either
    \begin{itemize}
        \item $f$ is not dynamically coherent and there is a periodic 2-torus
        $T = f^k(T)$ tangent either to $\Ec \oplus \Eu$ or $\Ec \oplus \Es$, or
        \item $f$ is dynamically coherent and leaf conjugate to its linear
        part.
    \end{itemize}  \end{thm}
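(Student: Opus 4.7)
The plan is to follow and adapt the strategy used in the absolute case by Hammerlindl \cite{Hammerlindl}. If $f$ is not dynamically coherent, then by the main theorem of \cite{Pot} there must be an $f^k$-invariant 2-torus tangent to either $\Ecu$ or $\Ecs$, giving the first alternative of the statement. Hence assume throughout that $f$ \emph{is} dynamically coherent; the goal reduces to producing a leaf conjugacy between $f$ and its linear part $A_f$.

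The argument then splits into three steps. First, I would show that $A_f$ itself is partially hyperbolic. Lifting the one-dimensional invariant foliations $\Ws$ and $\Wu$ to $\RR^3$ and examining their translation behavior under the deck action of $\ZZ^3 = \pi_1(\TT^3)$ pins down two distinct one-dimensional $A_f$-invariant subspaces on which $A_f$ acts as contraction and expansion respectively; since $A_f \in \mathrm{SL}(3,\ZZ)$ this forces three distinct real eigenvalues of moduli $<1$, intermediate, and $>1$, so $A_f$ is partially hyperbolic.

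Second, I would establish a \emph{global product structure} for the lifted foliations on $\RR^3$: any leaf of $\widetilde{\Wcs}$ meets any leaf of $\widetilde{\Wu}$ in exactly one point, and symmetrically for $\widetilde{\Wcu}$ and $\widetilde{\Ws}$. Along the way I would show that each lifted strong leaf stays within bounded Hausdorff distance of the corresponding eigenline of $A_f$, i.e.\ the strong foliations are quasi-isometric on the universal cover. This is essentially the content already required by \cite{Pot} to get dynamical coherence under the no-invariant-torus hypothesis, and the same hypothesis rules out the \cite{HHU} obstruction.

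Third, with the product structure in hand, I would build the leaf conjugacy $h:\TT^3 \to \TT^3$ following \cite{Hammerlindl}: pick a linear transversal to the center leaves of $A_f$ along the strong unstable direction of $A_f$, use the product structure to identify each center leaf of $\widetilde{f}$ with a unique center leaf of $\widetilde{A_f}$, verify that this identification is continuous and $\ZZ^3$-equivariant so that it descends to a homeomorphism of $\TT^3$, and finally check the leaf conjugacy identity $h f(L) = A_f h(L)$ on center leaves.

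The main obstacle is the second step. In the absolute case the quasi-isometry of the strong foliations comes from the Brin-Burago-Ivanov argument using uniform constants; in the pointwise setting those constants are unavailable and the conclusion can fail, as shown in \cite{HHU}. The replacement must be a topological argument: if a lifted strong leaf were to deviate unboundedly from its linear direction, one should extract an invariant set projecting to a periodic 2-torus tangent to a mixed bundle, contradicting our standing assumption. Carrying out this step cleanly, and adapting the leaf-conjugacy construction of \cite{Hammerlindl} to the weaker quasi-isometry bounds available in the pointwise setting, is where the real technical work lies.
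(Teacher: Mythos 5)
Your high-level plan --- quote \cite{Pot} for the non-coherent alternative and rerun the leaf-conjugacy machinery of \cite{Hammerlindl} in the remaining case --- is the same as the paper's, but the way you set up the dichotomy opens a gap exactly at the step you yourself single out as the hard one. You condition on dynamical coherence, whereas every quantitative input you need in step 2 (quasi-isometry of the lifted strong foliations, their asymptotic direction, almost alignment of $\tilde{\cW}^{cs}_f$ with the linear model, global product structure) is proved in \cite{Pot} under the hypothesis that there is \emph{no} periodic $2$-torus tangent to $\Es\oplus\Ec$ (resp.\ $\Ec\oplus\Eu$), not under coherence. Your proposed fix --- if a lifted strong leaf deviates unboundedly from its linear direction, extract a periodic torus tangent to a mixed bundle, ``contradicting our standing assumption'' --- does not close: your standing assumption is coherence, and the existence of such a torus does not contradict coherence within anything you invoke (neither this paper nor \cite{Pot} proves ``torus $\Rightarrow$ not coherent''; the theorem only asserts the converse pairing). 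The paper instead splits on the existence of the torus: if no such torus exists, Theorem \ref{Teo-Potrie} (compiling Theorem B and Propositions 6.9, 8.7/A.1 of \cite{Pot}, together with Lemma \ref{conelemma}) delivers coherence, quasi-isometry, asymptotic alignment and global product structure, which feed into Theorem \ref{pwtd}; if such a torus exists, the first bullet is exactly the result of \cite{Pot}. So either change your case division to torus/no-torus, or you owe an independent proof that coherence excludes such tori; and in either case the heart of step 2 is not something to re-derive by a vague topological extraction argument but to import from those specific results of \cite{Pot}.

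Step 1 is also under-argued, and for the same reason. Knowing that $\Ws$ and $\Wu$ exist does not by itself hand you $A_f$-invariant contracting and expanding lines: attaching a well-defined asymptotic direction to the lifted unstable leaves is precisely the pointwise difficulty of step 2 (Property I, Lemma \ref{conelemma}, Proposition 6.9 of \cite{Pot}), so your sketch is circular; without it you cannot exclude a complex pair of expanding (or contracting) eigenvalues of $A_f$, nor equal moduli. The paper obtains $|\lambda_1|<1<|\lambda_3|$ from \cite{BI} (Theorem 1.2) and rules out the complex/equal-moduli possibility when $|\lambda_2|\neq 1$ via Theorem A of \cite{Pot} (Proposition \ref{Proposition-PHinT3isPHthelinearpart}); note also that $|\lambda_2|=1$ is allowed (skew-product linear parts), so ``eigenvalues of moduli $<1$, intermediate, $>1$'' is all one can claim. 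Finally, the one genuinely new ingredient the paper adds to \cite{Hammerlindl} --- a pointwise proof that $\tilde{\cW}^{cs}_f(x)\cap\tilde{\cW}^{u}_f(x)=\{x\}$, deduced from quasi-isometry together with properties I and II --- does not appear in your outline, and it is one of the three places where the absolute-case argument genuinely breaks.
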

In the first case above, such a torus $T$ is transverse either to $\Eu$ or $\Es$
and is therefore either an attractor or a repeller.  Such phenomena are
impossible when the chain-recurrence set $\cR(f)$ is all of $\bbT^3$ (see \cite[Chapter 10]{BDV}).

\begin{cor}
    If $f:\bbT^3 \to \bbT^3$ is pointwise partially hyperbolic and
    $\cR(f)=\bbT^3$, then $f$ is dynamically coherent and leaf conjugate to
    its linear part.
\end{cor}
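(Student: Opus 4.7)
My plan is to derive the corollary directly from Theorem \ref{torusconj} by ruling out the first alternative under the hypothesis $\cR(f) = \bbT^3$. By that theorem, either $f$ is dynamically coherent and leaf conjugate to its linear part, in which case we are done, or there is an integer $k \ge 1$ and an embedded 2-torus $T \subset \bbT^3$ with $f^k(T) = T$ which is tangent to one of $\Ec \oplus \Es$ or $\Ec \oplus \Eu$. The heart of the argument is to show that the existence of such a $T$ forces $\cR(f) \ne \bbT^3$.

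Assume first that $T$ is tangent to $\Ec \oplus \Eu$. Then $T$ is transverse to $\Es$, so $T$ is a normally hyperbolic invariant submanifold of the diffeomorphism $g := f^k$ with normal bundle contracted by $Dg$. Because $T$ has codimension one and the normal bundle $\Es|_T$ is orientable (it is a continuous line field on $T$, and a tubular neighbourhood is a trivialised interval bundle over $T$), I can pick a small tubular neighbourhood $U$ of $T$ of the form $T \times (-\eps, \eps)$. The pointwise contraction on $\Es$ together with the compactness of $T$ lets me choose $\eps$ so small that $g(\overline{U}) \subset U$; thus $U$ is a trapping region for $g$ and $\Lambda := \bigcap_{n \ge 0} g^n(\overline U)$ is a topological attractor for $g$ containing $T$. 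In the other case, where $T$ is tangent to $\Ec \oplus \Es$, the same argument applied to $f^{-k}$ produces a topological attractor for $f^{-k}$ containing $T$.

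Once a proper topological attractor exists for either $g = f^k$ or $g = f^{-k}$, the standard characterisation of chain recurrence (see \cite[Chapter 10]{BDV}) yields a point $x \in \overline{U} \setminus \Lambda$ which is not chain recurrent for $g$: indeed, any $\delta$-pseudo-orbit starting at such an $x$ with $\delta$ less than $\dist(g(\overline{U}), \bbT^3 \setminus U)$ stays trapped in $U$ and hence in some $g^n(\overline U)$, so it cannot return to $x$ for sufficiently small $\delta$. Therefore $\cR(g) \ne \bbT^3$. Since the chain-recurrent set is invariant under taking iterates and inverses, $\cR(f) = \cR(f^{\pm k}) = \cR(g)$, contradicting our hypothesis. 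Consequently the first alternative of Theorem \ref{torusconj} cannot occur, and $f$ is dynamically coherent and leaf conjugate to its linear part.

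The main point where care is required is producing the trapping region $U$: one must invoke normal hyperbolicity of $T$ (or more elementarily, the transversality of $T$ to the contracted bundle $\Es$) to guarantee $g(\overline U) \subset U$. Everything else is a formal application of Theorem \ref{torusconj} and a well-known property of chain recurrence.
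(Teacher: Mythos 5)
Your proposal is correct and follows the same route as the paper: apply Theorem \ref{torusconj} and rule out the periodic torus tangent to $\Ec\oplus\Eu$ or $\Ec\oplus\Es$ because, being transverse to $\Es$ or $\Eu$, it yields a proper attractor or repeller for an iterate of $f$, which is incompatible with $\cR(f)=\bbT^3$. The paper simply cites this last incompatibility (via \cite[Chapter 10]{BDV}) rather than spelling out the trapping-region and pseudo-orbit argument as you do, so your write-up just fills in the standard details of the same argument.
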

Note that if the non-wandering set $\Omega(f)$ is all of $\TT^3$ (for example
when $f$ is transitive or when $f$ is volume-preserving) the previous
corollary applies.  

One can also show that a torus $T$ as in Theorem
\ref{torusconj} cannot exist when the linear part is hyperbolic (see for example Proposition A.1 of \cite{Pot}).

\begin{cor} \label{isoAnosov}
    If $f:\bbT^3 \to \bbT^3$ is pointwise partially hyperbolic and
    its linear part $A_f$ has no eigenvalues of modulus one,
    then $f$ is dynamically coherent
    and leaf conjugate to $A_f$.
\end{cor}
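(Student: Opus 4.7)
The plan is to invoke Theorem \ref{torusconj} to reduce to two cases and then eliminate the first. Since $A_f$ has no eigenvalues of modulus one and $A_f \in GL_3(\bbZ)$ (up to sign), $A_f$ is hyperbolic, i.e.\ Anosov. According to the remark immediately preceding the corollary (made precise in \cite[Proposition A.1]{Pot}), under hyperbolicity of the linear part no periodic $2$-torus $T = f^k(T)$ tangent to $\Ec \oplus \Eu$ or $\Ec \oplus \Es$ can exist. The first alternative of Theorem \ref{torusconj} is therefore excluded, and the second alternative gives exactly the conclusion: $f$ is dynamically coherent and leaf conjugate to $A_f$.

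Were I to unpack the obstruction step rather than quote \cite[Proposition A.1]{Pot}, the key observation is that such an invariant torus $T$ is essentially embedded in $\bbT^3$, so $[T] \neq 0$ in $H_2(\bbT^3;\bbZ)\cong\bbZ^3$, and $f^k$-invariance forces $f^k_\ast[T] = \pm[T]$. Identifying the action of $f_\ast$ on $H_2(\bbT^3)$ with $\bigwedge^2 A_f$ and writing $\lambda_1,\lambda_2,\lambda_3$ for the eigenvalues of $A_f$ (with $\lambda_1\lambda_2\lambda_3 = \pm 1$), the eigenvalues of $\bigwedge^2 A_f$ are $\lambda_i\lambda_j = \pm 1/\lambda_k$. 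The assumption $|\lambda_k|\neq 1$ for all $k$ then forbids $\bigwedge^2 A_f^k$ from having $\pm 1$ as an eigenvalue, contradicting the existence of the invariant class $[T]$.

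The only genuinely non-trivial ingredient in this outline is the essentiality of the invariant torus $T$, i.e.\ that it is $\pi_1$-injective and hence homologically nontrivial; this I would import directly from \cite[Proposition A.1]{Pot}. Everything else is immediate from Theorem \ref{torusconj} together with elementary linear algebra on $H_\ast(\bbT^3)$.
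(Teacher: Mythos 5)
Your proposal is correct and coincides with the paper's own argument: the corollary is obtained by applying Theorem \ref{torusconj} and ruling out the first alternative via the fact (Proposition A.1 of \cite{Pot}) that no periodic $2$-torus tangent to $\Ec\oplus\Eu$ or $\Ec\oplus\Es$ can exist when $A_f$ is hyperbolic. Your optional homological unpacking of that obstruction is a reasonable sketch of why the cited proposition holds, but it is not needed and the paper likewise simply quotes it.
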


The proof of Theorem \ref{torusconj} combines results
from \cite{Hammerlindl}, where leaf conjugacy was obtained for absolutely
partially hyperbolic systems, and \cite{Pot}, where dynamical coherence was
studied in the
pointwise case. This proof is given in section \ref{SectionTorusConj} after some
preliminaries are introduced in section \ref{SectionBranching}.

For the manifolds considered in Theorem \ref{nildyn}, all of the systems are
dynamically coherent and we can classify all of them.
If $M$ is a circle bundle over $\bbT^2$, it is a \emph{nilmanifold}, i.e. a compact
quotient $G/\Gamma$ of a nilpotent Lie group $G$.  For a diffeomorphism $f:M \to
M$, there is a unique Lie group automorphism on $G$ which descends to a map
$\Phi_f:M \to M$ homotopic to $f$.  Call $\Phi_f$ the \emph{algebraic part} of
$f$.
%(In the special case that $M = \bbT^3$, the algebraic part is the same as the
%linear part.)

\begin{thm}\label{nilconj}
    Suppose $M$ is a 3-dimensional nilmanifold, $M  \ne 
    \bbT^3$,
    and $f:M \to M$ is pointwise partially hyperbolic.  Then, $f$ is leaf
    conjugate to its algebraic part $\Phi_f$.
\end{thm}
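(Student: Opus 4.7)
By Theorem~\ref{nildyn}, $f$ is dynamically coherent, and (after an appropriate choice of bundle projection $\pi : M \to \mathbb{T}^2$) the center foliation $\mathcal{W}^c$ coincides with the foliation by fibers of $\pi$. Since $f$ permutes center leaves, it descends to a diffeomorphism $\bar{f}$ of $\mathbb{T}^2$. The algebraic part $\Phi_f$, being a Lie group automorphism of $\mathcal{H}/\Gamma$, preserves the center $Z(\mathcal{H})$ of the Heisenberg Lie algebra and hence descends to an integer linear map $\bar{\Phi}_f : \mathbb{T}^2 \to \mathbb{T}^2$. A standard argument (the linearization determines the splitting) shows $\Phi_f$ is itself partially hyperbolic with center equal to the Heisenberg vertical fibration, which in turn forces $\bar{\Phi}_f$ to be hyperbolic, i.e.\ Anosov. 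The maps $\bar{f}$ and $\bar{\Phi}_f$ are homotopic.

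The plan is first to produce a topological conjugacy $\bar{h} : \mathbb{T}^2 \to \mathbb{T}^2$ between $\bar{f}$ and $\bar{\Phi}_f$, and then to lift it fiberwise to a leaf conjugacy $h : M \to M$. For the first step, I would exploit that $E^s$ and $E^u$ are uniformly hyperbolic (pointwise partial hyperbolicity on a compact manifold gives uniform contraction and expansion constants for these two bundles separately) together with boundedness of $D\pi$, so that their pushforwards give a continuous dominated splitting for $\bar{f}$ on $\mathbb{T}^2$; on a surface this forces $\bar{f}$ to be Anosov, and Franks' theorem then provides $\bar{h}$ homotopic to the identity. For the second step, the algebraic center foliation is isotopic to $\pi$ as a smooth circle fibration, so after composing with an ambient diffeomorphism we may assume the two fibrations coincide; then define $h$ by sending the fiber over $p$ homeomorphically to the fiber over $\bar{h}(p)$ through a continuous family of fiberwise maps. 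The relation $\bar{h}\bar{f} = \bar{\Phi}_f \bar{h}$ gives $\pi \circ h \circ f = \pi \circ \Phi_f \circ h$, whence $hf(L)$ and $\Phi_f h(L)$ sit on the same fiber for every center leaf $L$, i.e.\ they agree as center leaves.

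The main difficulty is the first step: since $E^s(x), E^u(x)$ depend on the point $x \in M$ and not only on $\pi(x) \in \mathbb{T}^2$, the projected directions $D\pi(E^s(x))$ and $D\pi(E^u(x))$ do not a priori define line bundles on the base, so the argument sketched above for $\bar{f}$ being Anosov is not immediate. To rigorously justify the conjugacy, the natural route is to work in the universal cover $\widetilde{M} = \mathcal{H}$: lift $f$ and $\Phi_f$ along with all invariant foliations, and establish a global product structure between the lifted center-stable foliation $\widetilde{\mathcal{W}}^{cs}$ and the lifted unstable foliation $\widetilde{\mathcal{W}}^u$, and symmetrically between $\widetilde{\mathcal{W}}^{cu}$ and $\widetilde{\mathcal{W}}^s$. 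Combined with a bounded-distance estimate between $\widetilde{f}^n$ and $\widetilde{\Phi}_f^n$, this global product structure lets one match center leaves of $\widetilde{f}$ with center leaves of $\widetilde{\Phi}_f$ equivariantly under the deck action, producing $h$ on $M$. This mirrors the strategy used in the absolute case in \cite{HNil,Pw}; the new content in the pointwise setting is to check that the quasi-isometric behavior of the lifted $cs$ and $cu$ foliations in $\mathcal{H}$ — the technical core underlying global product structure — survives the weakening from absolute to pointwise domination, something that has been made accessible precisely because Theorem~\ref{nildyn} has already secured the existence of these foliations.
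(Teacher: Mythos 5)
There is a genuine gap: the heart of the theorem---comparing the dynamics of $f$ and $\Phi_f$ transversally to the center---is never actually established. Your primary route reduces everything to showing that the induced map $\bar f$ on the space of center leaves is topologically conjugate to the hyperbolic automorphism $\bar\Phi_f$ of $\bbT^2$, but $\bar f$ is only a homeomorphism (the fibration of Theorem \ref{nildyn} is merely topological, the center foliation being $C^0$), so ``Anosov'' does not even make sense for it, and, as you yourself concede, $E^s(x)$ and $E^u(x)$ do not project to line bundles on the base, so the dominated-splitting-downstairs argument collapses. Franks' theorem only gives a semiconjugacy in the homotopy class; promoting it to the conjugacy your fiberwise lift requires is exactly the hard part, and nothing in the proposal supplies it.

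The fallback (``global product structure plus a bounded-distance estimate between $\tilde f^n$ and $\tilde\Phi_f^n$, mirroring \cite{HNil}'') does not close this gap. A distance bound uniform in $n$ is false even on $\bbT^3$, and on $\Heis$ the direct metric comparison fails already for $n=1$: writing $u(x)=\Phi_f(x)^{-1}\tilde f(x)$ one has $u(x\gamma)=\Phi_f(\gamma)^{-1}u(x)\Phi_f(\gamma)$, and conjugation by deck transformations distorts the central coordinate without bound, so $d(\tilde f(x),\Phi_f(x))$ is bounded only modulo the center---which is precisely why \cite{HNil} and this paper work with the homomorphisms $\pi^s,\pi^u:\Heis\to\bbR$ instead of a metric comparison. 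Likewise, global product structure in the pointwise setting is not a known fact whose ``quasi-isometric behavior survives''; it is one of the things that must be proved, and in the paper it comes from genuinely new ingredients: the classification of Reebless $C^{1,0+}$ foliations on $\Heis/\Gamma$ (Proposition \ref{nilfolns}), applied through the Burago--Ivanov branching foliations, yields Lemmas \ref{cubdd} and \ref{csbdd} (boundedness of $\pi^s$ on $cu$-leaves and of $\pi^u$ on $cs$-leaves), from which Lemmas \ref{halfgps} and \ref{endsunbdd} and dynamical coherence follow. With these in hand, the paper's proof of leaf conjugacy is the observation that only (4.9) and (4.10) of \cite{HNil} used absolute partial hyperbolicity, the first being dynamical coherence and the second being equivalent to Lemma \ref{csbdd}, after which the construction of the leaf conjugacy in \cite{HNil} applies verbatim. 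Your proposal neither proves nor identifies these substitutes, so the needed conjugacy---on the base or upstairs---remains unproven.
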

In 2001, Pujals made a conjecture on transitive partially hyperbolic
diffeomorphisms which we paraphrase here (see \cite{BW}).

\begin{conjecture}
    [Pujals  (2001)]
    Up to a finite cover, every transitive partially hyperbolic diffeomorphism
    on a 3-manifold is leaf conjugate to
    \begin{itemize}
        \item an Anosov diffeomorphism on $\bbT^3$,
        \item a time-one map of an Anosov flow, or
        \item a topological skew product over an Anosov map on $\bbT^2$.
    \end{itemize}  \end{conjecture}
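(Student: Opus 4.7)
The plan would be to attack the conjecture by a case analysis driven by the fundamental group of the ambient $3$-manifold, using the partially hyperbolic structure to force strong topological restrictions on $M$ and then producing in each case an algebraic model to which $f$ is leaf conjugate. Before anything else, the first step is to establish (up to finite cover) a trichotomy for the manifolds that can carry a transitive pointwise partially hyperbolic diffeomorphism: either $\pi_1(M)$ is (virtually) nilpotent, or it is (virtually) solvable but not nilpotent, or it has exponential growth. Correspondingly one expects $M$ to be (finitely covered by) $\bbT^3$ or a Heisenberg nilmanifold in the first case, a mapping torus of a hyperbolic toral automorphism in the second, and a Seifert-fibered or hyperbolic $3$-manifold carrying an Anosov flow in the third.

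The nilpotent case then follows from the present paper. If $M$ is finitely covered by $\bbT^3$, transitivity gives $\cR(f)=\bbT^3$, so the corollary after Theorem \ref{torusconj} applies and $f$ is leaf conjugate to its linear part $A_f$. When $A_f$ is hyperbolic, $f$ is leaf conjugate to an Anosov diffeomorphism on $\bbT^3$ (first bullet); when $A_f$ has an eigenvalue of modulus one, $A_f$ is a linear skew product over a hyperbolic $\bbT^2$ automorphism (third bullet). If $M$ is finitely covered by a non-trivial circle bundle over $\bbT^2$, Theorem \ref{nilconj} directly yields leaf conjugacy to the algebraic part $\Phi_f$, which is itself a skew product over an Anosov map of $\bbT^2$ (third bullet).

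The solvable case would proceed by adapting the strategy of Theorem \ref{nildyn}: lift $f$ to the universal cover of the solvmanifold, show that branching center leaves collapse and thus that $\Ec$ integrates to a foliation, establish a global product structure for $\Ws,\Wc,\Wu$ using the quasi-isometric geometry of the solvable Lie group, and from that product structure build a leaf conjugacy to the time-one map of the suspension Anosov flow (which is the algebraic model). The main technical difficulty here is that center leaves are no longer parallel to the fibers of a bundle projection, so one has to follow them transverse to a $2$-dimensional Anosov ``base'' direction; Brin--Burago--Ivanov-style Global Product Structure arguments together with the exponential spreading in the solvable group should still suffice.

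The principal obstacle is the third, exponential-growth case, where there is no algebraic shortcut: one has to produce an Anosov flow from the partially hyperbolic data. The plan would be, first, to establish dynamical coherence, and then to argue in the spirit of Barbot--Fenley that the lifted center foliation in the universal cover has the same large-scale behaviour as the orbit foliation of a topological Anosov flow, so that $\Wc$ can be reparametrised to an actual flow $\phi_t$ whose time-one map is leaf conjugate to $f$. Ruling out exotic configurations in the style of the non-transitive examples of Bonatti--Parwani--Potrie, as well as obtaining coherence under mere transitivity with no algebraic structure on $M$, is where genuinely new foliation-theoretic or flow-theoretic techniques appear to be required, and is the reason the statement remains a conjecture rather than a theorem.
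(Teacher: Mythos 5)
This statement is a \emph{conjecture} in the paper, not a theorem: the authors do not prove it, and they explicitly claim only that it (and the Hertz--Hertz--Ures conjecture) holds when $\pi_1(M)$ is virtually nilpotent, as a consequence of Theorems \ref{alldyn}--\ref{nilconj}; the solvable case is deferred to the forthcoming work \cite{HP}, and the general case is open. Your proposal is therefore not, and cannot be graded as, a proof of the statement. The part of your argument that can be checked against the paper is the nilpotent case, and there you argue essentially as the authors do: on $\bbT^3$ transitivity gives $\cR(f)=\bbT^3$, so the corollary to Theorem \ref{torusconj} yields leaf conjugacy to $A_f$ (Anosov model if $A_f$ is hyperbolic, skew product if $A_f$ has a modulus-one eigenvalue), and on a non-toral nilmanifold Theorem \ref{nilconj} gives leaf conjugacy to $\Phi_f$, a skew product over an Anosov map of $\bbT^2$. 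One small point to tighten even there: the conjecture is ``up to finite cover,'' and a lift of a transitive map to a finite cover need not be transitive, so you should verify that chain recurrence of the lift (or the non-existence of a periodic torus tangent to $\Ec\oplus\Eu$ or $\Ec\oplus\Es$) still holds for the lifted map before invoking the corollary.

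The genuine gap is everything beyond the nilpotent case, and it is not a matter of missing details but of missing theorems. Your opening trichotomy already assumes the hard part: that a $3$-manifold with exponential-growth fundamental group carrying a transitive partially hyperbolic diffeomorphism must carry an Anosov flow, and that the (branching) center foliation can be promoted to the orbit foliation of such a flow. No argument is offered for dynamical coherence under mere transitivity --- that is precisely the other open conjecture quoted in the paper --- nor for the passage from a center foliation to an actual flow and a leaf conjugacy; ``Barbot--Fenley style'' is an aspiration, not a proof, and the paper's own techniques (Burago--Ivanov branching foliations plus the classification of Reebless foliations on nilmanifolds and torus bundles) rely essentially on the algebraic structure of the fundamental group and do not extend to this setting. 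The solvable case is likewise only asserted; adapting Theorem \ref{nildyn} there requires the foliation classification for mapping tori of Anosov automorphisms (Appendix \ref{Section-ClassificationofFoliations}) together with substantial additional work, which is exactly the content of \cite{HP}, not something that follows from what you wrote. As it stands, your text reproduces the paper's verified special case and otherwise restates the conjecture as a program.
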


More recently (in 2009), Hertz, Hertz, and Ures conjectured that transitivity could be replaced by dynamical coherence in the previous conjecture. Moreover, they posed the following conjecture which, if proven to be true implies in particular that transitive partially hyperbolic diffeomorphisms are dynamically coherent. 

\begin{conjecture}
    [Hertz, Hertz, Ures (2009)]
    If a partially hyperbolic diffeomorphism $f$ on a 3-manifold is not
    dynamically coherent, there is a periodic torus
    $T = f^k(T)$ tangent either to $\Ec \oplus \Eu$ or $\Ec \oplus \Es$.
\end{conjecture}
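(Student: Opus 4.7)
The plan is to pursue the natural generalization of the argument behind Theorem~\ref{torusconj}, using branching foliations as the central tool. On any closed orientable 3-manifold, the Burago--Ivanov construction produces $f$-invariant branching foliations $\FFcsb$ and $\FFcub$ tangent to $\Ecs$ and $\Ecu$ respectively; the key Anosov-style dichotomy is that $f$ is dynamically coherent if and only if neither of these branches. So one may assume, after possibly replacing $f$ by $f^{-1}$, that $\FFcsb$ genuinely branches, and the goal becomes to produce from this branching a compact $f$-periodic torus tangent to $\Ecs$.

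First I would pass to a finite cover so that $M$ and the sub-bundles $\Es,\Ec,\Eu$ are all orientable, and lift everything to the universal cover $\widetilde M$. Following the method of \cite{Pot}, I would select two distinct leaves $L_1,L_2\in\tFFcsb$ that intersect and saturate the family of all such branching pairs to cut out a connected open ``bubble'' $U\subset\widetilde M$ whose topological boundary consists of two complete $\Ecs$-leaves $S^-,S^+$. The deck action together with $f$-invariance should then force $S^{\pm}$ to descend to a common immersed surface $T\subset M$ tangent to $\Ecs$.

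Next I would use the dynamics to compactify and periodize $T$. Since $\Eu$ is uniformly expanded, iteration of $f$ collapses the transverse size of $U$ measured along $\Eu$, which should force any $\Eu$-arc crossing $U$ to have uniformly bounded length, and hence $T$ itself to be compact in $M$. Once compactness is established, transversality of $T$ to the expanding direction $\Eu$ yields only finitely many such surfaces, so $f$ permutes them and we obtain periodicity. Finally, to identify $T$ as a torus I would restrict the strong stable foliation $\Ws$ to $T$, use the absence of compact $\Ws$-leaves (by hyperbolicity along $\Es$) together with a Reeb/Novikov-type obstruction to rule out higher genus, and conclude $\chi(T)=0$; orientability then gives a torus after a further double cover if necessary.

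The main obstacle, and the reason the conjecture remains open, is the compactness step. In $\TT^3$ one has global product structure on $\widetilde M=\RR^3$ (cf.\ \cite{BBI2,Pot}), which directly bounds the geometry of the bubble $U$; on a general 3-manifold $\widetilde M$ may be topologically complicated and leaves of $\tFFcsb$ need not be properly embedded planes, so the bubble could a priori escape to infinity along an unbounded leaf, producing only a non-compact limit rather than a closed torus. Overcoming this without any restriction on $\pi_1(M)$ seems to require either new topological tools for branching foliations on arbitrary 3-manifolds, or a reduction via the Geometrization Theorem that analyzes each geometric piece separately; Theorems \ref{nildyn} and \ref{torusconj} in this paper already handle the (virtually) nilpotent case and are the first steps of such a program.
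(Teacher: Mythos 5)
You have not proved the statement, and neither does the paper: this item is stated as a \emph{conjecture} (Hertz--Hertz--Ures, 2009), and the authors only claim it for 3-manifolds whose fundamental group is (virtually) nilpotent. There it follows from their actual theorems: on $\TT^3$, Theorem \ref{torusconj} (building on \cite{Pot} and \cite{Hammerlindl}) shows that failure of dynamical coherence forces a periodic 2-torus tangent to $\Ec\oplus\Eu$ or $\Ec\oplus\Es$; on non-toral nilmanifolds, Theorems \ref{nildyn} and \ref{alldyn} show every pointwise partially hyperbolic diffeomorphism is dynamically coherent, so the conjecture holds vacuously. Your proposal is a program sketch whose decisive step --- extracting from a branching pair of leaves of $\FFcsb$ a \emph{compact}, $f$-periodic surface tangent to $\Ecs$ --- is precisely the content of the conjecture, and you concede yourself that you cannot carry it out on a general 3-manifold. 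That is the genuine gap: the ``bubble'' $U$ bounded by two merging leaves has no a priori control on its transverse size or its behavior at infinity without something like the global product structure and the asymptotic alignment with a model foliation that are available only after the manifold (here, one with nilpotent $\pi_1$) and its foliations have been classified; this is exactly where the paper invokes Proposition \ref{nilfolns}, Theorem \ref{Teo-Potrie}, and the comparison with the algebraic/linear part, none of which exist in general.

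Two further points in your outline are asserted but not justified. First, the dichotomy ``$f$ is dynamically coherent if and only if neither Burago--Ivanov branching foliation branches'' is used as a pivot, but only one implication is known and used in the paper (a branch-free $\FFcsb$ is a true foliation, hence coherence); the BI foliations are not unique, and coherence of $f$ does not formally force a chosen $\FFcsb$ to be non-branching, so your reduction ``assume $\FFcsb$ branches'' does not by itself exhaust the non-coherent case without an additional argument. Second, the expansion of $\Eu$ does not obviously ``collapse the transverse size of $U$'': forward iteration expands $\Eu$-arcs, and the known mechanism in \cite{Pot} and in Section \ref{SectionNilm} here is the opposite one --- unstable arcs trapped between two merging center-stable leaves must grow unboundedly in a direction in which the pair of leaves stays at bounded distance, yielding a contradiction (hence coherence) rather than a compact torus. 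Producing the torus in the genuinely incoherent case requires a different construction, and outside the settings covered by Theorems \ref{torusconj}, \ref{nildyn} and \ref{alldyn} the question remains open, as the paper itself emphasizes.
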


Both conjectures were posed in the pointwise case, and the results of this
paper show that both conjectures are true when the manifold in question has
(virtually) nilpotent fundamental group. As we mentioned, this last conjecture
in the case of $\TT^3$ was already established in \cite{Pot}. We mention that
in a forthcoming paper (\cite{HP}) we plan to extend our results to all 3-manifolds with \emph{solvable} fundamental group.

One obvious reason to study pointwise partially hyperbolic systems over
absolutely partially hyperbolic systems is that it is more general.  One
family of diffeomorphisms properly includes the other.
Another important
motivation is the study of robust transitivity and stable ergodicity (see \cite{BDV,WilkinsonSurvey}):
D\'iaz, Pujals, and Ures
proved that every $C^1$ robustly transitive diffeomorphism of a 3-manifold is
partially hyperbolic in the weak sense, that is, it satisfies the definition
given at the start of this paper, with the possible caveat that one of the
bundles $\Es$, $\Ec$, or $\Eu$ may be zero \cite{DPU}.
Their theorem is definitely a pointwise theorem, as there are robustly
transitive diffeomorphisms which are not absolutely partially hyperbolic.
The result shows that pointwise partial hyperbolicity is a notion which arises
naturally when studying the space of $C^1$ diffeomorphisms.

We remark that once the topological classification is obtained, 
further dynamical consequences follow.
Section
\ref{Section-Consequences} explores some more-or-less direct consequences of
our result. First, we study the existence and finiteness of maximal entropy
measures which are a direct application of our main results and previous ones
\cite{HHTU,Ures}. Then, we obtain another dynamical consequence which holds
for partially hyperbolic diffeomorphisms of non-toral nilmanifolds: 

\begin{prop}\label{dynamicalCons1} Let $f: M \to M$ a partially hyperbolic diffeomorphism of a nilmanifold $M \neq \TT^3$. Then, the foliations $\cW^u$ and $\cW^s$ have a unique minimal set (in particular, a unique quasi-attractor\footnote{See section \ref{Section-Consequences} for a definition.}).
\end{prop}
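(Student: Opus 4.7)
By Theorem~\ref{nildyn}, $f$ is dynamically coherent and the centre foliation coincides with the fibres of the bundle projection $\pi:M\to\TT^2$. Since $f$ preserves this fibration it descends to a diffeomorphism $\bar f:\TT^2\to\TT^2$, and the partial hyperbolic splitting of $f$ descends (modulo $E^c$) to a uniformly hyperbolic splitting of $T\TT^2$; hence $\bar f$ is Anosov. It is homotopic to the hyperbolic linear automorphism that $\Phi_f$ induces on the base, whose eigendirections have irrational slope, so by Franks' theorem $\bar f$ is topologically conjugate to this linear Anosov, and both its stable and unstable foliations are minimal on $\TT^2$.

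Because $d\pi(\Eu)$ is the unstable direction of $\bar f$ and $\cW^u$-leaves are complete, $\pi$ sends each leaf of $\cW^u$ onto the corresponding leaf of $\cW^u_{\bar f}$. A $cu$-leaf of $f$ is saturated by its centre leaves, i.e.\ by the fibres of $\pi$, so $\cW^{cu}(x)=\pi\inv(\cW^u_{\bar f}(\pi(x)))$, which by the minimality of $\cW^u_{\bar f}$ is dense in $M$. Symmetrically every $cs$-leaf of $f$ is dense. In particular, any closed $\cW^u$-saturated subset of $M$ projects onto $\TT^2$, so every minimal set $K$ of $\cW^u$ meets every fibre.

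To obtain uniqueness, fix $x\in K$ and let $L=\cW^{cu}(x)$. Since $\pi(L)\cong\RR$ is simply connected, $L$ is the trivial $S^1$-bundle over $\pi(L)$; each leaf of $\cW^u|_L$ projects via a local diffeomorphism onto all of $\pi(L)$, so it is a complete section of this bundle and meets each fibre of $L$ in exactly one point. It follows that the $\cW^u$-saturation in $L$ of any single complete fibre is the whole of $L$. Hence if $K$ contains even one entire fibre, then $K\supseteq L$, and density of $L$ in $M$ forces $K=M$, which is the unique minimal set of $\cW^u$.

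It remains to rule out the alternative that $K\cap F_y\subsetneq F_y$ for every $y\in\pi(L)$. Taking a periodic point $p$ of $\bar f$ (which exist in abundance since $\bar f$ is Anosov) the fibre $F_p$ is $f^N$-invariant for some $N$, and $K\cap F_p$ would be a proper $f^N|_{F_p}$-invariant closed subset of the circle $F_p$. The $\cW^u|_L$-holonomy between two fibres of $L$ is a homeomorphism carrying $K\cap F_y$ to $K\cap F_{y'}$; combined with the density of $\pi(L)\subset\TT^2$, the dual density of $cs$-leaves, and the exponential contraction of $f$ along $\cW^s$, one propagates this information in a way that is incompatible with both the Denjoy (irrational rotation number) and the periodic-orbit (rational rotation number) possibilities for the circle diffeomorphism $f^N|_{F_p}$, producing a contradiction; hence $K=M$. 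The dual argument applied to $f\inv$ gives uniqueness for $\cW^s$, and any quasi-attractor, being $\cW^u$-saturated and forced to contain the unique $\cW^u$-minimal set, is then unique as well (see Chapter~10 of \cite{BDV}). The technical heart of the argument is precisely this last step: since $K$ is only $\cW^u$-saturated, $\cW^s$-holonomies do not a priori preserve it, and the main obstacle is to combine those holonomies with the $f^N|_{F_p}$ circle dynamics and the exponential stable contraction to rule out a proper $f^N|_{F_p}$-invariant subset of $F_p$.
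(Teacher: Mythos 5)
Your argument goes wrong at its final step, and the problem is not just that the step is left as a sketch: the statement you are trying to establish there is false. You aim to show that every minimal set $K$ of $\cW^u$ equals $M$, i.e.\ that the unstable foliation is minimal. But the proposition only asserts \emph{uniqueness} of the minimal set, and minimality genuinely fails in general: as the paper points out, Y.~Shi has constructed Axiom~A partially hyperbolic diffeomorphisms on $\Heis/\Gamma$ (in the spirit of \cite{BG}) with a proper attractor and a proper repeller. The attractor is a proper, closed, $\cW^u$-saturated set, so the unique minimal set of $\cW^u$ is a proper subset of $M$; and by your own (correct) observation that a minimal set containing one entire centre fibre would have to be all of $M$, in such examples the minimal set must meet every fibre in a \emph{proper} compact subset of the circle. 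So the alternative you propose to "rule out" --- $K\cap F_y\subsetneq F_y$ for every $y$ --- is exactly what occurs, and no propagation of holonomies, stable contraction and rotation-number arguments can produce the contradiction you hope for. Since you yourself identify that paragraph as the technical heart and do not carry it out, the proof collapses there. (Lesser issues: the projection $\pi$ along the centre foliation is only continuous, so the induced map $\bar f$ of $\TT^2$ is merely a homeomorphism; it is not "Anosov" and Franks' theorem does not apply to it as stated. What you actually need --- conjugation of the centre-leaf-space dynamics to the hyperbolic automorphism induced by $\Phi_f$, and density of $cu$- and $cs$-leaves --- comes from the leaf conjugacy of Theorem~\ref{nilconj}.)

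The paper's route avoids claiming minimality altogether. Using the leaf conjugacy, it takes a torus $T$ made up of centre leaves and transverse to both strong foliations, and considers the return map of the unstable holonomy on $T$; after conjugation this is a fibered homeomorphism $F(x,y)=(x+\alpha,\varphi_x(y))$ over an irrational rotation, homotopic to a Dehn twist. Two claims finish the proof: $F$ is transitive (iterated horizontal arcs acquire unbounded vertical length because of the Dehn-twist class, combined with minimality of the base rotation), and a transitive fibered homeomorphism of this type has a unique minimal set (the argument of Proposition~4.2 of \cite{BCLJ}: two disjoint minimal sets yield two disjoint invariant families of "ordered gaps", each with non-empty interior by semicontinuity of the fibre sections, contradicting transitivity). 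Uniqueness of the minimal set of $F$ then gives uniqueness of the minimal set of $\cW^u$ (hence of the quasi-attractor), and transitivity of $F$ gives a dense unstable leaf. If you want to keep your fibre-wise picture, the question you must answer is which proper compact subsets of each fibre a minimal set can trace out --- which is precisely the gap/ordering argument of \cite{BCLJ} --- rather than a dichotomy forcing $K=M$.
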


\begin{notation}
    Throughout this paper ``partially hyperbolic'' without further qualifiers
    is taken to mean pointwise partially hyperbolic and
    where all three bundles $\Eu$, $\Ec$, and $\Es$ are non-zero.
\end{notation}

\section{Branching foliations}\label{SectionBranching}

In this section, we introduce the notions of ``almost aligned''
%, ``almost parallel''
and ``asymptotic'' for branching and non-branching foliations, and
review the results of Burago and Ivanov.

A (\emph{complete}) \emph{surface} in a $3$-manifold $M$
%is a $C^1$-injective immersion $\imath : U \to M$ of a
is a $C^1$ immersion $\imath : U \to M$ of a
connected smooth $2$-dimensional manifold without boundary $U$
which is complete with the metric induced by the metric on $M$
pulled back by $\imath$.

A \emph{branching foliation} on $M$ is a collection of complete
surfaces tangent to a given continuous $2$-dimensional
distribution on $M$ such that:

\begin{itemize}
\item Every point is in the image of at least one surface.
\item There are no topological crossings between any two surfaces of the
collection.  That is, no curve lying on one leaf can cross through another
leaf.
\item It is complete in the following sense:
if $x_k \to x$ and $\imath_k$ are surfaces of the
partition having $x_k$ in its image we have that $\imath_k$
converges in the $C^1$-topology to a surface of the collection
with $x$ in its image (see \cite{BI} Lemma 7.1).
\end{itemize}

The image $\imath(U)$ of each surface in a branching foliation is called a
\emph{leaf}.

\begin{thm}
    [Burago-Ivanov \cite{BI}, Theorem 4.1] \label{branfol}
    If $f$ is a partially hyperbolic diffeomorphism on a 3-manifold $M$, such
    that the bundles $E^s$, $E^c$ and $E^u$ are orientable and the orientation
    is preserved by $Df$, then, there is a (not necessarily unique)
    $f$-invariant branching foliation $\FFcsb$ tangent to $\Ecs$.  Further,
    any curve tangent to $\Es$ lies in a single leaf of $\FFcsb$.
\end{thm}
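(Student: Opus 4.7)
The plan is to construct the branching foliation as a $C^0$ limit of complete surfaces tangent to a smooth approximation of $E^{cs}$, following the strategy that Burago and Ivanov introduced for the $3$-torus. First I would fix a small $\epsilon>0$ and choose a smooth $2$-plane field $D$ on $M$ with $\angl(D(x),E^{cs}(x))<\epsilon$ pointwise, together with a smooth line field $\ell^s\subset D$ that $C^0$-approximates $E^s$. The distribution $D$ is not integrable, so for each $x$ I would produce a \emph{quasi-leaf} through $x$ by integrating $\ell^s$ through $x$ to obtain a short curve, then thickening this curve transversally inside $D$ to build a smooth disk nearly tangent to $D$; propagating via a cover by exponential charts yields a complete immersed surface $\Sigma^\epsilon_x$ whose tangent planes lie uniformly within $\epsilon$ of $E^{cs}$.

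Next I would exploit the dominated splitting to improve these surfaces under iteration. Because $E^{cs}$ is $Df$-invariant and $\Eu$ is uniformly expanded, any vector in $D$ transverse to $E^{cs}$ lies in the $\Eu$-cone and is exponentially amplified by $Df$, while vectors along $E^{cs}$ are only mildly stretched; therefore $Df^n(D(x))$ converges to $E^{cs}(f^n x)$. Consequently the push-forward $f^n(\Sigma^\epsilon_x)$ is a complete surface whose tangent planes lie within some $\epsilon_n\to 0$ of $E^{cs}$. Extracting a subsequence via Arzel\`a-Ascoli applied to these surfaces viewed as uniformly Lipschitz graphs in local charts transverse to $\Eu$, I obtain a limit surface $L_x\ni x$ genuinely tangent to $E^{cs}$. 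The collection $\FFcsb$ of all such limits is $f$-invariant by construction, complete in the required sense by a diagonal argument, and contains a leaf through every point.

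It then remains to forbid topological crossings and to verify that every $E^s$-curve lies in a single leaf. To rule out crossings I would use the orientability hypothesis to coherently label one side of each quasi-leaf as positive using the fixed orientations of $E^s$, $E^c$, $E^u$; because $Df$ preserves these orientations, the labeling is invariant, and in the limit two leaves may merge or branch but cannot cross each other transversely, which is precisely the branching phenomenon. For the last assertion, if $\gamma$ is a curve tangent to $E^s$, then $\length(f^n\gamma)\to 0$ uniformly, so for large $n$ the image $f^n\gamma$ lies in a single local quasi-leaf; pulling back by $f^{-n}$ and using $f$-invariance forces $\gamma$ itself to lie in a single leaf of the limit branching foliation.

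The hard part will be the no-crossing property. Tangent-plane convergence and the production of the limit surfaces are relatively standard consequences of domination plus compactness, but ruling out transverse crossings among the limit leaves requires a genuinely global argument using the $3$-dimensional topology together with the orientability hypothesis to propagate coherent side-labels through the limit, and this is where most of the work is concentrated. A secondary subtlety is maintaining metric completeness of the limit leaves, which demands uniform geometric control on the quasi-leaves along arbitrarily long $f$-orbits.
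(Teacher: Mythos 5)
You are proving a theorem that this paper only quotes: Theorem \ref{branfol} is cited from Burago--Ivanov, so the comparison must be with their actual argument, not with anything in this paper. Your limiting scheme runs the dynamics the wrong way. Since $E^u$ dominates $E^{cs}$, forward iteration exponentially amplifies the $E^u$-component of a tangent plane relative to its $E^{cs}$-component, so $Df^n(D(x))$ drifts \emph{away} from $E^{cs}$ (toward planes containing $E^u$) unless $D=E^{cs}$ exactly; your own justification (``any vector transverse to $E^{cs}$ is exponentially amplified'') proves precisely this. What is true is that $Df^{-n}$ contracts a cone around $E^{cs}$, so one must take surfaces through $f^{n}(x)$ and pull them back by $f^{-n}$. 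With that reversal, the Arzel\`a--Ascoli step does yield, through every point, \emph{some} complete surface tangent to $E^{cs}$ --- and, as you say yourself, that is the routine part.

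The genuine gap is everything after that. Declaring the family to be ``the collection of all such limits'' gives neither the no-crossing property nor a usable invariance. Two surfaces tangent to a merely continuous plane field through the same point can cross topologically, just as integral curves of a continuous vector field can when uniqueness fails; coorienting all approximating surfaces by $E^u$ does not exclude this, because at a topological crossing the two limit surfaces are tangent and carry the same coorientation, so your ``side-label'' argument proves nothing (and ``cannot cross transversely'' is not even the relevant condition). If, instead, you select one limit surface per point to avoid crossings, then $f$-invariance and the closure-under-$C^1$-limits axiom of a branching foliation are no longer ``by construction.'' Resolving exactly this tension --- producing a single family that simultaneously covers $M$, is pairwise non-crossing, is $f$-invariant, and is closed under limits --- is the real content of Burago--Ivanov's theorem; it is where the orientability of $E^s$, $E^c$, $E^u$ \emph{separately} and the preservation of all these orientations by $Df$ enter through a careful ordering/selection argument, whereas your sketch never uses the orientations of $E^s$ and $E^c$ at all. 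Finally, your last step is not a proof: a short stable curve near $f^n(x)$ need not ``lie in a single local quasi-leaf.'' The correct route is unique integrability of $E^s$: inside any leaf tangent to $E^{cs}$ the restricted line field $E^s$ integrates, uniqueness of strong stable curves forces these integral curves to be the stable curves, so every leaf contains local stable plaques, and completeness of the leaves then carries this along the entire curve.
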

A similar foliation $\FFcub$ is defined tangent to $\Ecu$ under the same
hypotheses.

\begin{remark}
In proving results of dynamical coherence, we will need to show in some cases
that the branching foliation given by this theorem is a true foliation.
A helpful observation is that if $\cF$ is a branching foliation
such that every point in $M$ belongs to a unique leaf, then $\cF$ is indeed a
true foliation (see Proposition 1.6 and Remark 1.10 in \cite{BW}). A
\emph{foliation} for us will mean a $C^{1,0+}$ foliation in the notation of
\cite{CandelConlon}, that is, a $C^0$-foliation with $C^1$-leaves tangent to a
continuous distribution. 
\end{remark}

A surface $\imath: S \to M$ can be lifted to a surface
$\tilde\imath:\tilde S \to \tilde M$ where $\tilde S$ and $\tilde M$ are the
universal covers.
This lift is not unique in general.
For a branching foliation $\FF$ on $M$, consider the collection of all
possible lifts of all surfaces.  This collection defines a unique branching
foliation $\tFF$ called the lift of $\FF$ to $\tM$.
A branching foliation $\FF_1$ is \emph{almost aligned} with a branching
foliation $\FF_2$ if there is $R>0$ such that
each leaf of $\tFF_1$ lies in the $R$-neighborhood of a leaf of $\tFF_2$.
Note that this defines a relation on the set of foliations which is
transitive, but not necessarily symmetric.

Burago and Ivanov further proved that from any branching foliation, a
non-branching foliation can be constructed by deforming each leaf by an
arbitrarily small amount \cite{BI} (Section 7).  In the partially hyperbolic
setting, if the
branching foliation is transverse to the unstable foliation, the new foliation
is also transverse, and by this virtue, it does not contain a Reeb component
\cite{BI} (Section 2).
These results imply the following.
\begin{thm}
    [Burago-Ivanov \cite{BI}] \label{nearfol}
    Under the hypotheses of Theorem \ref{branfol}, there
    is a (non-branching) $C^{1,0+}$ Reebless foliation $\cW$ such that
    $\cW$ is almost aligned with $\FFcsb$ and
    $\FFcsb$ is almost aligned with $\cW$.
    A similar foliation exists for $\FFcub$.
\end{thm}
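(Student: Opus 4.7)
The plan is to follow the Burago--Ivanov scheme: construct $\cW$ by a small deformation of $\FFcsb$ that resolves all branching, and then use transversality to the unstable direction to rule out Reeb components.

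First, I would build the non-branching foliation in the universal cover and descend. Fix a small $\eps > 0$, and at each point of $\tM$ choose local coordinates with one coordinate along $\Eu$ (which is transverse to $\Ecs$). Every leaf of $\tFFcsb$ is locally a graph over the $\Ecs$-plane in these coordinates, and the defining property that leaves do not topologically cross implies that, locally, the set of graphs can be linearly ordered along the $\Eu$-coordinate. Through each point I would then select a new surface by an averaging (convolution-type) procedure in the transverse direction over the leaves passing through a small neighborhood; the no-crossing condition guarantees that the average is a well-defined $C^1$ graph tangent to a distribution $C^0$-close to $\Ecs$. By uniform choice of $\eps$ and equivariance under deck transformations, this produces a $C^{1,0+}$ foliation $\cW$ on $M$. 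The almost-aligned property in both directions is immediate: each leaf of $\tilde\cW$ lies in the $\eps$-neighborhood of some leaf of $\tFFcsb$ by the averaging formula, and conversely every leaf of $\tFFcsb$ lies in the $\eps$-neighborhood of a leaf of $\tilde\cW$ passing near it.

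Next, to rule out Reeb components I would exploit transversality of $\cW$ to $\Wu$, which persists because the tangent distribution of $\cW$ is $C^0$-close to $\Ecs$ and $\Ecs$ is transverse to $\Eu$. Suppose for contradiction that $\cW$ contains a Reeb component with boundary torus $T$. Then $\Wu$ restricted to $T$ is a nonsingular one-dimensional foliation; by Novikov's theorem there is a closed transversal loop, and an unstable leaf entering the solid torus interior is trapped by transversality to $T$ in a compact region, contradicting the uniform expansion and unbounded length of unstable curves. The construction for $\FFcub$ is symmetric, using $\Es$ as the transverse direction.

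The main obstacle, as in \cite{BI}, is step one: assembling a continuous, equivariant selection rule for leaves in neighborhoods of branching loci so that the output is truly a $C^{1,0+}$ foliation rather than merely a continuous family of surfaces. This requires care in the choice of local transversal coordinates, in controlling the averaging procedure uniformly across the manifold, and in verifying that the resulting tangent distribution remains continuous at points where the original branching foliation has multiple leaves.
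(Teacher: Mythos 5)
The paper does not reprove this statement: it is quoted directly from Burago--Ivanov, with the construction of the approximating foliation cited to Section 7 of \cite{BI} and the Reeblessness cited to Section 2 of \cite{BI}. Your proposal instead tries to reconstruct that argument, and as written it has two genuine gaps. First, the central step --- resolving the branching --- is not achieved by the convolution/averaging device you describe. Branching means two leaves coincide on an open set and separate elsewhere; leaves of $\tFFcsb$ passing through nearby points then have averages that still coincide on that open set, so the averaged surfaces can branch exactly as before, and nothing in the no-crossing order structure alone forces the output to be a genuine foliation. The actual construction in \cite{BI} is of a different nature: one selects a suitable countable family of leaves, opens up (``blows up'') the branching loci by inserting product regions of definite transverse thickness, and takes a controlled limit, all done equivariantly; this is precisely the content you flag as ``the main obstacle'' but do not supply, so the heart of the theorem is missing.

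Second, the Reeblessness argument is incorrect as stated. Since the boundary torus $T$ of a putative Reeb component is a \emph{leaf} of $\cW$ and $\Wu$ is transverse to $\cW$, the unstable foliation is transverse to $T$, not tangent to it, so ``$\Wu$ restricted to $T$'' is not a one-dimensional foliation of $T$ and Novikov's theorem cannot be invoked that way. Moreover, an unstable ray trapped in a compact solid torus is not by itself contradictory: unstable leaves have infinite length inside the compact manifold $M$ in any case, so ``uniform expansion and unbounded length'' does not finish the argument. The correct route (and the one behind the citation to Section 2 of \cite{BI}) is that a transversal ray trapped in a Reeb component must cross the same leaf repeatedly, producing a null-homotopic closed curve transverse to $\Ecs$ (the meridian of the solid torus compresses), and such closed transversals are excluded by the Burago--Ivanov volume-growth estimate for neighborhoods of iterated unstable arcs in the universal cover --- an input of a different kind from mere length growth, and one your sketch never uses.
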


Note that the new foliation $\cW$ is neither $f$-invariant nor tangent to
$\Ecs$.

% Where is this used?  Also, does it only hold for the 3-torus?
%\begin{remark} One can also prove that if $f$ has no periodic torus tangent to $E^{cs}$ then, the foliation $\cW$ has no torus leaves either (see Proposition 5.9 and Lemma 8.6 of \cite{Pot}).
%\end{remark}

In the specific case of the torus $\bbT^d = \bbR^d / \bbZ^d$, we define a
notion of the ``asymptotic'' behavior of a foliation.
A linear subspace $V \subset \bbR^d$ defines a linear foliation $\tFF_V$ where
the leaves are the fibers of the orthogonal projection $\pi:\bbR^d \to V^\perp$.
A branching foliation $\tFF$ is \emph{asymptotic} to $\tFF_V$ if for $\eps>0$
there is $K>0$ such that
\[
    \|x - y\| > K  \quad \Rightarrow \quad  \|\pi x - \pi y\| < \eps \|x - y\|
\]
for all $x,y$ on the same leaf of $\tFF$.  Note that if $\tFF$ is almost aligned
with $\tFF_V$ then $\tFF$ is asymptotic to $\tFF_V$.

As a final definition,
a foliation $\tilde \cW$ on a manifold $\tM$ is \emph{quasi-isometric}
if there is a global constant $Q > 1$ such that
\[
    d_{\tilde \cW}(x,y) < Q\ d_{\tM}(x,y) + Q
\]
for any points $x$ and $y$ on the same leaf of $\tilde \cW$.  Here, $d_{\tilde
\cW}$ denotes the distance inside a leaf of the foliation.

\begin{figure}[t]
\begin{center}
\includegraphics{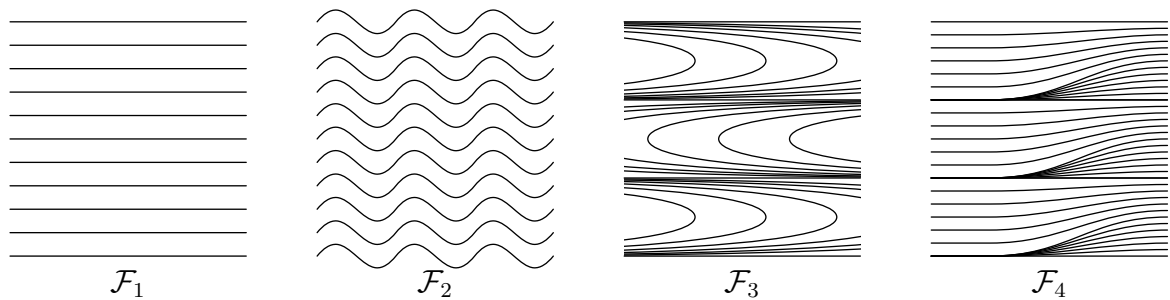}
\end{center}
\caption{A graphical depiction of four examples of branching foliations on the
plane.
Only $\FF_4$ exhibits branching; the others are true foliations.
For any $1\le i,j\le 4$,
one can verify that $\FF_i$ is almost aligned with $\FF_j$.
The foliation $\FF_1$ is linear and the other foliations are asymptotic to
$\FF_1$.
Finally, $\FF_3$ is the only one which is not quasi-isometric.
}
\label{foln-figure}
\end{figure}

Figure \ref{foln-figure} gives a graphical illustration of these definitions.

% Added to put the figure between section 2 and 3.  This should be removed if
% the page layout changes.
\pagebreak

% Torus {{{1
\section{Leaf conjugacy on the 3-torus}\label{SectionTorusConj}

The main result of \cite{Hammerlindl} is the following.

\begin{thm} \label{absconj}
    Let $f:\Td \to \Td$ be an absolutely partially hyperbolic diffeomorphism.
    Suppose
    \begin{itemize}
        %\item $\Wu_f$ and $\Ws_f$ are quasi-isometric, and
        \item $\tilde {\cW}^\alpha_f$ is a quasi-isometric foliation ($\alpha=u,s$),
        and
        \item $\Ec_f$ is one-dimensional.
    \end{itemize}
    Then, $f$ is leaf conjugate to its linear part.
\end{thm}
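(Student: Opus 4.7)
The plan is to pass to the universal cover and identify the center leaves of $f$ with the linear center leaves of its linear part $A = A_f$ via a natural projection. Let $\tilde f : \RR^3 \to \RR^3$ be a lift of $f$. Since $f$ is homotopic to $A$, one has $\sup_x \|\tilde f(x) - Ax\| < \infty$; this will let us transfer geometric information between $\tilde f$ and $A$.

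The first step is to exploit the quasi-isometric hypothesis on $\tilde \cW^s_f$ and $\tilde \cW^u_f$ to read off the structure of $A$. Quasi-isometry implies that each lifted strong stable (resp.\ unstable) leaf lies within bounded Hausdorff distance of a coset of a fixed linear subspace of $\RR^3$; comparing $\tilde f^n$-iterates along these leaves with $A^n$-iterates, whose difference is controlled by the bounded distortion, forces $A$ to be partially hyperbolic with stable, center, and unstable eigenspaces $E^s_A, E^c_A, E^u_A$ of dimensions matching those of $f$, and with expansion/contraction rates compatible with the absolute partially hyperbolic constants of $f$.

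The second step is dynamical coherence together with a global product structure. Theorem \ref{branfol} provides $f$-invariant branching foliations $\FFcsb$ and $\FFcub$, and Theorem \ref{nearfol} yields nearby Reebless true foliations. Using that $\tilde \cW^s_f$ and $\tilde \cW^u_f$ are quasi-isometric in $\RR^3$ and almost aligned with the linear foliations by $E^s_A$ and $E^u_A$, one shows that each leaf of $\tFFcsb$ meets each leaf of $\tilde \cW^u_f$ in exactly one point and symmetrically for $\tFFcub$ and $\tilde \cW^s_f$. This forces both branching foliations to be genuine foliations (by the criterion in the remark after Theorem \ref{branfol}), so $f$ is dynamically coherent, and $\tilde \cW^c_f := \tilde \cW^{cs}_f \cap \tilde \cW^{cu}_f$ is a one-dimensional foliation whose leaves are properly embedded lines with well-controlled ends.

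The third step is to construct the conjugacy. Let $P : \RR^3 \to E^s_A \oplus E^u_A$ denote linear projection along $E^c_A$. Using the product structure and quasi-isometry, one verifies that $P$ sends each leaf of $\tilde \cW^c_f$ to a bounded set, that distinct center leaves have $P$-images in disjoint bounded pieces, and that the induced map on the leaf space of $\tilde \cW^c_f$ is a homeomorphism onto $E^s_A \oplus E^u_A$. Composing with the identification of $E^s_A \oplus E^u_A$ with the leaf space of the linear center foliation of $A$ gives a map $\tilde h : \RR^3 \to \RR^3$ that sends center leaves of $\tilde f$ to center leaves of $A$. Equivariance under the $\ZZ^3$-action (so that $\tilde h$ descends to $h : \Td \to \Td$) and the leaf-conjugacy identity $h f(L) = A h(L)$ both follow from the bounded distance $\|\tilde f - A\| \le C$ together with the canonical nature of $P$, since deck transformations commute with $A$ and permute center leaves of $\tilde f$.

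The main obstacle I expect is the second step: promoting the branching foliations to genuine foliations with global product structure in the presence of only pointwise (not absolute) bounds on the center direction. The combination of absolute partial hyperbolicity of $A$ (derived in step one), the quasi-isometric hypothesis, and Reeblessness of the Burago--Ivanov approximating foliations are precisely what is needed to rule out branching and to force the transverse intersections to be single points; each of these ingredients is used in an essential way, and the argument is genuinely global on $\RR^3$.
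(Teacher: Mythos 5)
The first thing to note is that this paper does not prove Theorem \ref{absconj} at all: it is quoted verbatim as the main result of \cite{Hammerlindl}, and the paper's contribution here is only to identify which steps of that external proof use absoluteness (the propositions restated as Properties I--III) so as to obtain the pointwise version, Theorem \ref{pwtd}. Measured against the actual proof in \cite{Hammerlindl} -- dynamical coherence via Brin's theorem (absolute partial hyperbolicity plus quasi-isometric strong foliations, \cite{Brin}), then Properties I--III comparing the foliations of $f$ and $g=A_f$, then a careful construction of the leaf conjugacy -- your outline has the right overall architecture but leaves the genuinely hard steps unproved.

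Concretely: (a) you replace Brin's coherence theorem by the Burago--Ivanov branching foliations, but Theorem \ref{branfol} is a three-dimensional result (and requires orientability hypotheses you never address), whereas Theorem \ref{absconj} is stated on $\Td$ for arbitrary $d$ with one-dimensional center and possibly higher-dimensional $\Es,\Eu$; moreover you yourself flag the promotion of $\tFFcsb,\tFFcub$ to true foliations with global product structure as ``the main obstacle'' and then do not carry it out -- in the cited proof this is exactly where absoluteness is used. (b) Your first step asserts that quasi-isometry forces each strong leaf to lie within bounded Hausdorff distance of a coset of a fixed linear subspace; that is stronger than what is true or needed. What holds (Property I) is that $\tilde{\cW}^{u}_f,\tilde{\cW}^{s}_f$ are \emph{asymptotic} to the eigenspaces of $A_f$; bounded distance is available only for the center-stable/center-unstable foliations (Property II). (c) In your third step, the claim that distinct center leaves of $\tilde f$ have disjoint $P$-images -- i.e.\ that projection along $E^c_{A_f}$ induces a bijection between the center leaf space of $f$ and that of $A_f$ -- is precisely one of the delicate points of \cite{Hammerlindl}: a priori two distinct center leaves could shadow the same translate of $E^c_{A_f}$, and excluding this requires the global product structure and Properties I--III, not merely $\sup_x\|\tilde f(x)-A_fx\|<\infty$. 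Finally, a leaf conjugacy is a homeomorphism $h$ of $\Td$ with $hf(L)=A_fh(L)$, so one must also choose points within center leaves continuously and $\ZZ^d$-equivariantly, which your sketch treats as automatic. As it stands the proposal is a plausible roadmap for $d=3$, but not a proof of the stated theorem.
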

While the theorem is stated under the stronger hypothesis of absolute partial
hyperbolicity, many of the arguments of the proof apply equally well in the
pointwise case.
In fact, absolute partial hyperbolicity
is used only in the beginning of one of the sections (Chapter 2, titled ``Nice
Properties of the Invariant Manifolds'').

In this paper's introduction, we used $A_f$ to denote the linear part of $f$,
whereas in \cite{Hammerlindl} it is called the ``linearization'' and denoted
by $g$.
In this section, we use the symbols $A_f$ and $g$ interchangeably depending on
the context.

At the start of the proof in \cite{Hammerlindl}, the constants
$0 < \lambda < \hat \gamma < \gamma < \mu$
associated to the absolutely partially hyperbolic splitting of $f$ are used to
define a splitting for $g$.
Next,
the quasi-isometry assumption is used to show dynamical coherence.  This
follows from a result of Brin which holds only in the absolutely partially
hyperbolic case \cite{Brin}.
Then, \cite{Hammerlindl} states and proves a number of propositions comparing
the foliations of $f$ and $g$.
The proofs of exactly three of these propositions (numbered (2.3), (2.5), and
(2.7)) rely on absolute instead of pointwise partial hyperbolicity.
Using the definitions given in this paper, these three propositions may be
stated as follows.
\begin{enumerate}
    \item[I.]
    $\tilde {\cW}^\alpha_f$ is asymptotic to $E^\alpha_g$ ($\alpha=u,s$).
    \item[II.]
    $\tilde {\cW}^\alpha_f$ is almost aligned with $\tilde {\cW}^\alpha_g$
    ($\alpha=cu,cs$).
    \item[III.]
    For all $x \in \Rd$,
    $
        \tilde {\cW}^{cs}_f(x) \cap \tilde {\cW}^{u}_f(x) = \{x\}  $
    and
    $
        \tilde {\cW}^{cu}_f(x) \cap \tilde {\cW}^{s}_f(x) = \{x\}.
    $  \end{enumerate}
The version of II in \cite{Hammerlindl} also states that $\tilde {\cW}^c_f$ is
almost aligned with $\tilde {\cW}^c_g$, but this easily follows as a consequence
of II as given above.
While the proof of III in \cite{Hammerlindl} relies on absolute partial
hyperbolicity, it can be replaced by a short proof that works in the pointwise
case.

\begin{proof}
    [Proof of III assuming I and II]
    Suppose
    $x  \ne  y \in \tilde {\cW}^{cs}_f(x) \cap \tilde {\cW}^{u}_f(x)$
    and define $v_n = f^n(x) - f^n(y)$.
    Quasi-isometry of the unstable foliation implies that $\|v_n\| \to \infty$
    as $n \to \infty$.
    Then, property I implies that the angle of $v_n$ with $\Eu_g$ goes to zero
    and property II implies that the angle with $\Ecs_g$ goes to zero.  These
    cannot both be true.
\end{proof}
As the rest of the proof of Theorem \ref{absconj} follows assuming only
pointwise partial hyperbolicity, we may reformulate the statement thus.

\begin{thm} \label{pwtd}
    Let $f:\Td \to \Td$ be a pointwise partially hyperbolic diffeomorphism with
    linear part $g:\Td \to \Td$.  Suppose $g$ has a (linear) partially
    hyperbolic splitting and that
    \begin{itemize}
        \item $f$ is dynamically coherent,
        %\item $\tilde {\cW}^s_f$ and $\tilde {\cW}^u_f$ are quasi-isometric,
        \item $\tilde {\cW}^\alpha_f$ is a quasi-isometric foliation ($\alpha=u,s$),
        \item $E^c_f$ is one-dimensional, and
        \item $f$ and $g$ satisfy properties I and II above.
    \end{itemize}
    Then, $f$ is leaf conjugate to $g$.
\end{thm}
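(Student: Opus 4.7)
The plan is to revisit the proof of Theorem~\ref{absconj} in \cite{Hammerlindl} and verify that all of its steps remain valid in the pointwise partially hyperbolic setting under the assumptions of Theorem~\ref{pwtd}. As the text preceding the statement explains, the original proof uses absolute partial hyperbolicity in only three places: to supply dynamical coherence (via Brin's theorem), to prove properties~I and~II, and to prove property~III. Dynamical coherence together with properties~I and~II are now hypotheses, and property~III has just been derived from~I, II, and the quasi-isometry of $\tilde{\cW}^u_f$ and $\tilde{\cW}^s_f$. So the proof reduces to checking that the remainder of \cite{Hammerlindl} goes through unchanged.

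To make this concrete, the next thing I would do is use the global product structure encoded in~III, together with the asymptotic and alignment estimates of~I and~II, to construct a candidate leaf conjugacy $h: \Rd \to \Rd$. Following \cite{Hammerlindl}, the map $h$ is built by pairing each $f$-center leaf $\tilde{\cW}^c_f(x)$ with a unique linear $g$-center leaf $\tilde{\cW}^c_g(h(x))$ determined by the large-scale behaviour of $x$: property~III guarantees the pairing is unambiguous, property~II ensures that $\tilde{\cW}^{cs}_f$ and $\tilde{\cW}^{cu}_f$ lie in a bounded neighbourhood of their linear counterparts, and property~I controls the asymptotic directions of the extremal foliations. These ingredients are exactly what is needed for $h$ to be well defined, continuous, and $\Zd$-equivariant, and hence to descend to a continuous map $h: \Td \to \Td$ sending center leaves of $f$ to center leaves of $g$.

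It then remains to verify the leaf-conjugacy identity $h f(L) = g h(L)$ for every center leaf $L$ of $f$. This is a dynamical argument which uses the $f$-invariance of the branching foliations together with the hyperbolic toral dynamics induced by $g$ on the leaf space of $\tilde{\cW}^c_g$, and is carried out in \cite{Hammerlindl} using only pointwise estimates once~I, II, III are available. The main obstacle is really bookkeeping: one must inspect each argument in \cite{Hammerlindl} downstream of Chapter~2 and confirm that the constants $\lambda, \hat\gamma, \gamma, \mu$ of absolute partial hyperbolicity never appear except as inputs to~I, II, III, which are now supplied independently by the hypotheses of Theorem~\ref{pwtd}. Since the author has indicated that this inspection is successful, the proof is completed by quoting the remaining chapters of \cite{Hammerlindl} verbatim.
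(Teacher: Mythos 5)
Your proposal is correct and follows essentially the same route as the paper: both reduce Theorem \ref{pwtd} to re-running the argument of \cite{Hammerlindl} with dynamical coherence, quasi-isometry, and properties I--III supplied as hypotheses (III having been derived from I and II in the pointwise case), the construction of the leaf conjugacy itself being quoted from that reference. The only nuance is that the paper does not claim the absolute constants appear solely as inputs to I--III; rather, its concluding remark observes that they recur throughout \cite{Hammerlindl} but that everywhere else only uniform rates $\Cph>1$, $0<\lambda<1<\mu$ on the strong bundles are needed, and such constants exist for any pointwise partially hyperbolic system.
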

\begin{remark}
    The constants $\lambda$ and $\mu$ from the absolutely partially hyperbolic
    splitting are used throughout the exposition in \cite{Hammerlindl}.
    However, apart from the special cases mentioned above, all we need are
    constants $\Cph > 1$ and $0 < \lambda < 1 < \mu$ satisfying
    \begin{align*}
        \frac{1}{\Cph} \mu^n \|v\| &< \|Df^n v\|&&
        \text{for}\ v \in \Eu_f(x) \setminus \{0\}, \\
        \|Df^n v\| &< \Cph \lambda^n \|v\|&&
        \text{for}\ v \in \Es_f(x) \setminus \{0\}.
    \end{align*}
    Such constants exist for any pointwise partially hyperbolic system.
\end{remark}
\medskip

This finishes the discussion for diffeomorphisms on a torus $\bbT^d$ of
general dimension $d  \ge  3$.
In the specific case of dimension three,
we combine Theorem \ref{pwtd} with the results in \cite{Pot} to prove Theorem \ref{torusconj}.
First, we note that the linear part $g = A_f$ is partially hyperbolic.

\begin{prop}
\label{Proposition-PHinT3isPHthelinearpart}
If $f: \TT^3 \to \TT^3$ is partially hyperbolic, the linear part $A_f$
has real eigenvalues
$\lambda_1,\lambda_2$ and $\lambda_3$ where
\[    |\lambda_1|< |\lambda_2| < |\lambda_3|
    \quad \text{and} \quad
    |\lambda_1|< 1 < |\lambda_3|.  \]
The associated eigenspaces define a partially hyperbolic splitting for
$A_f$.
\end{prop}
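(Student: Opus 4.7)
The plan is to use the branching foliations $\FFcsb$, $\FFcub$ provided by Theorem~\ref{branfol} to detect $A_f$-invariant subspaces of $\RR^3$, and then read off the eigenvalue ordering from the pointwise domination estimates. After possibly passing to a finite cover and an iterate we may assume the three bundles are orientable and $Df$ preserves orientation, so that Theorem~\ref{branfol} applies.

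The geometric heart of the argument is to show that the universal-cover lifts $\tFFcsb$ and $\tFFcub$ are asymptotic, in the sense of Section~\ref{SectionBranching}, to linear foliations by translates of two planes $V^{cs}, V^{cu} \subset \RR^3$. For genuine Reebless foliations on $\TT^3$ this is classical, and for branching foliations it is an input from \cite{Pot} (applying Theorem~\ref{nearfol} to pass to an honest nearby foliation whose asymptotic plane transfers back to the branching object). Writing $\tilde f = A_f + H$ with $H$ bounded (being $\ZZ^3$-periodic), $\tilde f$-invariance of $\tFFcsb$ then forces $A_f(V^{cs})=V^{cs}$: otherwise the image of a single leaf would have to lie simultaneously in bounded tubes around $V^{cs}$ and around the distinct plane $A_f V^{cs}$, hence in a bounded set, contradicting that leaves are complete $2$-surfaces. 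The same argument gives $A_f(V^{cu})=V^{cu}$.

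With both $V^{cs}$ and $V^{cu}$ now $A_f$-invariant, their intersection $V^c := V^{cs}\cap V^{cu}$ is one-dimensional (since $\Ecs$ and $\Ecu$ are distinct pointwise), so $A_f$ acts on $V^c$ by a real scalar $\lambda_2$. On each of the $A_f$-invariant $2$-planes $V^{cs}$ and $V^{cu}$ the presence of a real eigendirection forces the complementary eigenvalue to be real as well; call them $\lambda_1$ (on $V^s\subset V^{cs}$) and $\lambda_3$ (on $V^u\subset V^{cu}$). Thus $\RR^3 = V^s\oplus V^c\oplus V^u$ is a real eigenspace decomposition of $A_f$. To place the moduli, I would use that the strong unstable foliation $\Wu$ lifts to a quasi-isometric foliation in $\RR^3$ (the $\TT^3$ version due to Brin--Burago--Ivanov, and in the pointwise case to \cite{Pot}), whose leaves are contained in leaves of $\tFFcub$ and hence lie within bounded distance of translates of $V^{cu}$. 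Uniform expansion at rate at least $\mu>1$ along unstable leaves, combined with $\tilde f = A_f + O(1)$, forces $A_f$ to stretch some direction in $V^{cu}$ at asymptotic rate $\ge \mu$; pointwise domination of $\Eu$ over $\Ec$ rules out this direction being $V^c$, so it must be $V^u$, giving $|\lambda_3|>1$. The symmetric argument with $\Ws$ yields $|\lambda_1|<1$, and a final appeal to pointwise domination between the three bundles places $|\lambda_2|$ strictly between $|\lambda_1|$ and $|\lambda_3|$, producing the partially hyperbolic splitting for $A_f$.

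The main obstacle is the first geometric step: establishing that branching foliations on $\TT^3$ admit a linear asymptote and that this asymptote is preserved by $A_f$. This requires the delicate analysis of Reebless foliations on $\TT^3$ carried out in \cite{Pot}; once it is in hand the remainder of the argument is elementary linear algebra together with standard expansion/contraction bookkeeping.
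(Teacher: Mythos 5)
There is a genuine gap, and it sits exactly at the points your sketch treats as routine. First, the claim that $V^{cs}\cap V^{cu}$ is one-dimensional ``since $\Ecs$ and $\Ecu$ are distinct pointwise'' does not follow: the asymptotic planes of the two branching foliations are large-scale objects, and pointwise transversality of the bundles gives no a priori reason why the two planes cannot coincide. Second, and more seriously, the last step --- ``a final appeal to pointwise domination \ldots places $|\lambda_2|$ strictly between $|\lambda_1|$ and $|\lambda_3|$'' --- is precisely the non-trivial content of the proposition. Pointwise domination of $Df$ does not transfer to the eigenvalues of $A_f$ in any direct way (that is why the statement needs a proof at all), and in particular it does not by itself exclude a complex pair with $|\lambda_2|=|\lambda_3|>1$ or two real eigenvalues of equal modulus; ruling this out is where the paper invokes a substantive result of \cite{Pot} (Theorem A). Third, your route to $|\lambda_1|<1<|\lambda_3|$ leans on quasi-isometry of $\tilde\cW^u$ ``in the pointwise case'', but in this paper quasi-isometry is only obtained (Theorem \ref{Teo-Potrie}, from \cite{Pot}, Proposition 6.9) under the hypothesis that there is no periodic torus tangent to $\Es\oplus\Ec$, whereas Proposition \ref{Proposition-PHinT3isPHthelinearpart} must hold for every pointwise partially hyperbolic $f$ on $\TT^3$, including the non-coherent examples; so you cannot cite it unconditionally. (Also, the $A_f$-invariance argument for the asymptotic planes is stated for ``asymptotic'', which is weaker than lying in a bounded tube, so the ``bounded set'' contradiction as written does not quite go through.)

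For comparison, the paper's proof is a short reduction to known results: Burago--Ivanov (\cite{BI}, Theorem 1.2) already gives $|\lambda_1|<1<|\lambda_3|$ for any (pointwise) partially hyperbolic diffeomorphism of $\TT^3$; then one splits on $|\lambda_2|$. If $|\lambda_2|=1$ the eigenvalue must be real (a non-real $\lambda_2$ would force a conjugate partner of the same modulus, impossible since the other two moduli are $\ne 1$) and the strict ordering is automatic; if $|\lambda_2|>1$, Theorem A of \cite{Pot} gives that $\lambda_2,\lambda_3$ are real and distinct, hence $\lambda_1$ is real too; if $|\lambda_2|<1$ one applies the same argument to $f^{-1}$. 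If you want to keep your geometric approach, you would essentially have to reprove these two inputs (Burago--Ivanov's eigenvalue estimate and Potrie's analysis excluding equal moduli above $1$), which is considerably more work than the proposition itself.
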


\begin{proof}
    Let $\lambda_1,\lambda_2$ and $\lambda_3$ be the (possibly complex)
    eigenvalues of $A_f$, ordered so that
    $
        |\lambda_1|  \le  |\lambda_2|  \le  |\lambda_3|.
    $
    Burago and Ivanov proved that $|\lambda_1| < 1 < |\lambda_3|$; see \cite{BI}
    (Theorem 1.2).   If $|\lambda_2|=1$, we are done, since non-real eigenvalues
    must come in conjugate pairs.  If $|\lambda_2| > 1$, then \cite{Pot}
    (Theorem A) shows that $\lambda_2$ and $\lambda_3$ are real and distinct,
    and $\lambda_1$, as the only eigenvalue with modulus smaller than $1$, must be
    real as well.  Finally, if $|\lambda_2| < 1$, consider $f \inv$ instead.
\end{proof}

To show that the unstable foliation of $f$ is asymptotic to the
unstable foliation of $A_f$, we need a technical lemma.

\begin{lemma} \label{conelemma}
    Suppose
    \begin{itemize}
        \item
        $\FF$ is a one-dimensional foliation on $\bbT^3$,
        \item
        $\FF$ is invariant under a diffeomorphism $f:\bbT^3 \to \bbT^3$,
        \item
        the linear part $A_f$ of $f$ is partially hyperbolic and therefore
        defines a plane $P$ through the origin tangent to $\Ecs_{A_f}$, and
        \item
        there is $K>0$ and a cone $\cE$ transverse to $P$ such that
        \[
            \|y-x\| > K  \quad \Rightarrow \quad  y-x \in \cE
        \]
        for all $x \in \bbR^3$ and $y \in \tilde \FF(x)$.  \end{itemize}
    Then, $\tilde \FF$ is asymptotic to the unstable foliation of $A_f$.
\end{lemma}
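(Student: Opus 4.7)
The plan is to lift $f$ to $\tilde f : \RR^3 \to \RR^3$, write $\tilde f = A + \phi$ with $A = A_f$ linear and $\phi$ bounded (because $f$ is homotopic to $A$), and exploit the $A$-invariant eigenspace splitting of $\RR^3$. By Proposition \ref{Proposition-PHinT3isPHthelinearpart}, $A$ has real eigenvalues $|\lambda_1| < |\lambda_2| < |\lambda_3|$ with $|\lambda_1|<1<|\lambda_3|$, so $\RR^3 = E^u \oplus E^c \oplus E^s$ with $V = E^u$ and $P = E^{cs} = E^c\oplus E^s$. Since $V$ and $E^{cs}$ are transverse, the orthogonal projection $\pi:\RR^3\to V^\perp$ restricts to an isomorphism on $E^{cs}$; hence $\|\pi w\|$ is comparable to the norm of the $E^{cs}$-component $w^{cs}$ in this splitting, and asymptoticity to the unstable foliation of $A$ reduces to showing $\|w^{cs}\|/\|w\| \to 0$ as $\|w\|\to\infty$, where $w = y-x$ ranges over displacements between points on the same leaf of $\tilde\FF$. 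The cone hypothesis then reads $\|w^u\| \geq c\|w\|$ and $\|w^{cs}\| \leq C_0\|w^u\|$ whenever $\|w\| > K$.

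By $f$-invariance of $\FF$, the backward iterate $u_n := \tilde f^{-n}(y) - \tilde f^{-n}(x)$ also lies on a single leaf of $\tilde\FF$; writing $\tilde f^{-1} = A^{-1} + \psi$ with $\psi$ bounded and iterating yields
\[
    u_n = A^{-n} w + \sum_{k=0}^{n-1} A^{-(n-1-k)} \chi_k
\]
with $\|\chi_k\| \leq 2\|\psi\|_\infty$. Projecting onto $E^u$, the error sum is a geometric series of ratio $|\lambda_3|^{-1}<1$, so $\|u_n^u\| \leq |\lambda_3|^{-n}\|w^u\| + C_u$ is uniformly bounded for large $n$; combined with the cone hypothesis applied to $u_n$, this yields the key estimate $\|u_n^{cs}\| \leq M(|\lambda_3|^{-n}\|w^u\| + C_u) + K$. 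The $E^s$-projection of the iteration formula gives $u_n^s = \lambda_1^{-n}w^s + F_n^s$ with $|F_n^s| = O(|\lambda_1|^{-n})$; dividing the key estimate by $|\lambda_1|^{-n}$ and sending $n\to\infty$ then yields $|w^s| = O(1)$, uniformly in $w$.

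The same argument projected onto $E^c$ immediately gives $|w^c| = O(1)$ when $|\lambda_2| < 1$. The main obstacle is the case $|\lambda_2| \geq 1$: here the principal term $\lambda_2^{-n} w^c$ shrinks (or stays constant) under backward iteration, and the identity $u_n^c = \lambda_2^{-n}w^c + F_n^c$ together with the key estimate only yields
\[
    |w^c| \leq M(|\lambda_2|/|\lambda_3|)^n \|w^u\| + C|\lambda_2|^n
\]
(with an extra factor linear in $n$ when $|\lambda_2|=1$). The correct sublinear bound is extracted by choosing the iteration number $n$ so that $|\lambda_3|^n \approx \|w^u\|$, which balances the two competing terms and gives $|w^c| = O(\|w^u\|^\beta)$ with $\beta = \log|\lambda_2|/\log|\lambda_3| < 1$ (and $|w^c| = O(\log\|w\|)$ in the borderline case). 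Combined with $|w^s| = O(1)$ and $\|w^u\| \asymp \|w\|$, this proves $\|w^{cs}\|/\|w\| \to 0$, completing the proof. The delicate point is precisely that $n$ must be allowed to grow logarithmically with $\|w\|$: fixing $n$ in advance gives only the trivial cone bound, and only by tuning $n$ to the scale of $\|w\|$ can one convert the $f$-invariance of the cone condition into the desired sublinear decay of the $E^c$-component.
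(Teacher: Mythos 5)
Your proof is correct, but it runs along a more explicit, computational route than the paper's. The paper only sketches the lemma via two soft properties: first, that $A_f^k(\cE)$ converges uniformly to the direction $E^u_{A_f}$ as $k \to \infty$, and second, that for each \emph{fixed} $k$ the angle between $\tilde f^k(y)-\tilde f^k(x)$ and $A_f^k(y)-A_f^k(x)$ tends to zero as $\|y-x\| \to \infty$ (since $\tilde f^k - A_f^k$ is bounded). One then fixes $k=k(\eps)$, pulls a far-apart pair $x,y$ on a leaf back by $\tilde f^{-k}$ so that the displacement lands in $\cE$, and pushes forward: $y-x$ is then within a small angle of $E^u_{A_f}$, which is exactly asymptoticity. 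You instead decompose displacements in the eigenbasis of $A_f$, write $\tilde f^{-1}=A_f^{-1}+\psi$ with $\psi$ bounded, and run a variation-of-constants estimate with the number of backward iterates tuned to $\log\|y-x\|$. This buys a strictly stronger, quantitative conclusion -- the $E^s$-displacement along a leaf is uniformly bounded and the $E^c$-displacement is $O(\|y-x\|^\beta)$ with $\beta=\log|\lambda_2|/\log|\lambda_3|<1$ (or $O(\log\|y-x\|)$ when $|\lambda_2|=1$) -- at the cost of using the real eigenvalue splitting explicitly; note that this splitting already follows from the hypothesis that $A_f$ is partially hyperbolic, so the appeal to Proposition \ref{Proposition-PHinT3isPHthelinearpart} is unnecessary (and strictly speaking unavailable, since the lemma does not assume $f$ itself is partially hyperbolic). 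One small caveat: your closing claim that tuning $n$ to the scale of $\|w\|$ is \emph{necessary} is overstated. For plain asymptoticity it suffices to fix $n=n(\eps)$ with $(|\lambda_2|/|\lambda_3|)^n<\eps$ and then let $\|w\|\to\infty$, which is precisely the paper's fixed-$k$ scheme; the tuning is only needed for your sharper power and logarithmic bounds, and this does not affect the validity of your argument.
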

Here, we take a cone transverse to $P$ to mean a set of the form
\[
    \cE = \{ v \in \bbR^3 : \|\pi v\| < c \|v\| \}
\]
where $0 < c < 1$ and where $\pi: \bbR^3 \to P$ is any projection.
The proof of Lemma \ref{conelemma} relies on two properties.
First, that as $k \to \infty$, the angle between a vector $v \in A_f^k(\cE)$ and
the one-dimensional subspace $E^u_{A_f}$ tends uniformly to zero.
Second, that for fixed $k$, as $\|y-x\|$ goes to infinity, the angle between
$f^k(y)-f^k(x)$ and $A_f^k(y)-A_f^k(x)$ tends uniformly to zero.
We leave the details to the reader.

Several of the results of \cite{Pot} are used to establish the leaf
conjugacy.
We compile them all into the following statement.

\begin{thm}\label{Teo-Potrie} Let $f: \TT^3 \to \TT^3$ be a
partially hyperbolic diffeomorphism such that there is no
periodic 2-torus tangent to $E^s \oplus E^c$.  Then{:}
\begin{enumerate}
\item There is a unique $f$-invariant foliation $\cW_f^{cs}$ tangent to $\Ecs_f$.
\item ${\tilde \cW}_f^{cs}$ is almost aligned with ${\tilde \cW}_{A_f}^{cs}$.
\item Each leaf of ${\tilde \cW}_f^{cs}$ intersects each leaf of
${\tilde \cW}_f^u$ exactly once.\\(That is, the two foliations have global
product structure.)
\item \label{qiitem} ${\tilde \cW}_f^u$ is a quasi-isometric foliation.
\item ${\tilde \cW}_f^u$ is asymptotic to ${\tilde \cW}_{A_f}^u$.
\end{enumerate}
\end{thm}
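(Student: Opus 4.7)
The plan is to upgrade the Burago--Ivanov branching foliation to a true foliation by establishing a global product structure on the universal cover $\tM = \RR^3$, using the no-periodic-torus hypothesis in a single contradiction argument, and then reading off all five items from the geometric picture that emerges. After passing to a finite cover so that $\Es, \Ec, \Eu$ are orientable and $Df$-orientation-preserving, Theorem \ref{branfol} produces an $f$-invariant branching foliation $\FFcsb$ tangent to $\Ecs$; by Proposition \ref{Proposition-PHinT3isPHthelinearpart}, the linear part $A_f$ is partially hyperbolic with a linear splitting $\Es_{A_f} \oplus \Ec_{A_f} \oplus \Eu_{A_f}$ on $\RR^3$.

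The first block of work is geometric and does not yet use the hypothesis: show that each leaf of $\tFFcsb$ lies in a bounded neighborhood of an affine translate of $\Ecs_{A_f}$, yielding item (2). This is a Burago--Ivanov-type estimate: $f$ is at bounded distance from $A_f$ on $\tM$, the branching leaves are complete and $f$-invariant, and the dominated splitting forces any branching leaf that tried to drift transverse to $\Ecs_{A_f}$ to be expelled under forward or backward iteration; the only way to be both complete and $f$-invariant is to remain in a uniform tube around a plane. Transversality of $\Eu_f$ to $\FFcsb$ then places unstable leaves in a complementary cone to the leaves of $\tFFcsb$, and Lemma \ref{conelemma} applied to $\FF = \cW^u_f$ produces item (5).

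The technical heart is item (4), quasi-isometry of $\tilde\cW^u_f$, which is where the hypothesis is used. I would argue by contradiction: if $\tilde\cW^u_f$ fails quasi-isometry, there exist unstable arcs of arbitrarily large intrinsic length whose endpoints in $\tM$ remain at bounded Euclidean distance. Translating these arcs by elements of $\ZZ^3$ and passing to a subsequence (possibly after iterating by some $f^n$) produces a limit unstable leaf that accumulates on itself in $\TT^3$. The branching center-stable foliation traps this accumulation inside a single center-stable leaf, and the resulting compact surface is $f^k$-invariant and tangent to $\Ecs$; by the topological classification of closed surfaces on $\TT^3$ admitting such a dynamics, it must be a 2-torus, contradicting the hypothesis. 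The main obstacle here is controlling the limiting object while staying inside the branching foliation and verifying that the extracted surface is genuinely compact and tangent to $\Ecs$ rather than merely almost tangent; this step is the technical core of \cite{Pot} and uses in an essential way that $\dim M = 3$.

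Once quasi-isometry is in hand, item (3) (global product structure) follows from a Brin-type argument: $\tilde\cW^u_f$-leaves are quasi-geodesics asymptotic to $\Eu_{A_f}$, while $\tFFcsb$-leaves are near-planes transverse to $\Eu_{A_f}$, so each pair intersects in exactly one point. Item (1) is then immediate: if two distinct branching leaves passed through a common point $x$, then along $\tilde\cW^u_f(x)$ one would force a second intersection with one of them, contradicting (3); hence $\FFcsb$ is a true foliation, which we rename $\cW^{cs}_f$. Uniqueness among $f$-invariant foliations tangent to $\Ecs$ follows because any such foliation is itself a branching foliation satisfying the same conclusions, and two such foliations through every point, by the product structure, must coincide.
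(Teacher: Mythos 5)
Your plan amounts to reproving the results of \cite{Pot} rather than quoting them (the paper's own proof simply cites \cite{Pot}: Theorem B for (i), Proposition A.1 or 8.7 for (ii)--(iii), Proposition 6.9 for (iv) and for the cone condition, with only (v) derived here via Lemma \ref{conelemma}), and as a self-contained argument it has genuine gaps. The most serious one is your ``first block'': you claim that item (ii) -- the alignment of the center-stable branching foliation with $\Ecs_{A_f}$ -- uses no hypothesis and follows from completeness, invariance and domination (``leaves drifting transverse to $\Ecs_{A_f}$ get expelled''). This is precisely the step where the no-torus hypothesis is needed when $A_f$ is not Anosov, and the heuristic cannot be turned into a proof: in the non-dynamically coherent examples of \cite{HHU} (replace $f$ by $f^{-1}$ if needed) there is a periodic, complete, invariant surface tangent to $\Es\oplus\Ec$ -- the periodic torus -- whose lift stays within bounded distance of the plane spanned by the \emph{stable and unstable} eigendirections of $A_f$ (the only rational $A_f$-invariant plane there), not of $\Ecs_{A_f}$. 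So invariance, completeness and domination alone do not confine center-stable leaves to a tube around $\Ecs_{A_f}$; identifying the correct invariant plane is exactly what \cite{Pot} (Prop. A.1/8.7) does under the hypothesis. Relatedly, your derivation of (v) is too quick: Lemma \ref{conelemma} requires the large-scale cone condition on displacement vectors $y-x$ for far-apart points on an unstable leaf, which does not follow from pointwise transversality of $\Eu$ to the branching leaves (an unstable curve could a priori travel far in a direction nearly parallel to the plane while slowly crossing leaves). In the paper that cone condition is extracted from \cite{Pot} (Prop. 6.9) only after (ii)--(iii) and together with quasi-isometry, so obtaining (v) before (iii) and (iv), as you propose, is unjustified.

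Item (iv), which you rightly call the technical heart and the place where the hypothesis enters, is only sketched, and you explicitly defer its core (``controlling the limiting object \dots is the technical core of \cite{Pot}''); note also that failure of quasi-isometry only produces arcs whose intrinsic length dwarfs the distance between endpoints, not arcs with uniformly bounded endpoints, so even the setup of your limiting argument needs repair before one can hope to extract a compact $f^k$-invariant surface tangent to $\Ecs$. Finally, the one-line deduction of (i) from (iii) is not correct as stated: two distinct leaves through $x$ each meet $\tilde\cW^u_f(x)$ exactly once, namely at $x$, which is no contradiction. The actual argument (used in Section 5 of this paper and in \cite{Pot}) takes $q_1\in L_1\setminus L_2$, uses the product structure to find $q_2\in L_2\cap\tilde\cW^u_f(q_1)$, and then plays the unbounded growth of the unstable segment (via (iv)--(v)) against the bounded width of the leaves (via (ii)). In short, the route could in principle be made to work, but in its present form steps (ii), (v), (iv) and the passage to (i) are all gapped, and the claimed independence of (ii) from the hypothesis is false.
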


\begin{proof}
    The first item is a restatement of \cite{Pot} (Theorem B).  The second and
    third items follow from \cite{Pot} (Proposition A.1) in the case where $A_f$
    is Anosov, or from \cite{Pot} (Proposition 8.7) in the case where it is
    not.
    With these established, \cite{Pot} (Proposition 6.9) implies that ${\tilde
    \cW}_f^u$ is quasi-isometric (iv) and that there is a cone $\cE$ as in
    Lemma \ref{conelemma} above.  The last item then follows from this lemma.
\end{proof}
%Theorem A follows easily as a combination of Proposition
%\ref{Proposition-PHinT3isPHthelinearpart}, Theorem
%\ref{Teo-Potrie}, and Theorem \ref{pwtd}.

Note that this result also holds with the roles of the stable and unstable
bundles exchanged.
Theorem \ref{torusconj} now follows easily as a combination of
Theorem \ref{pwtd},
Proposition
\ref{Proposition-PHinT3isPHthelinearpart}, and
Theorem \ref{Teo-Potrie}.

\section{Numbering} %{{{1
The next section extensively references results in section 4 of \cite{HNil}.
To avoid any possible confusion between that paper and this one, we do not
state any results in this paper's section 4.

% Nilmanifold {{{1
\section{Non-toral nilmanifolds}\label{SectionNilm}
There are several ways to view nilmanifolds in dimension three.
One is as quotients of nilpotent Lie groups.
Another is as bundles of the 2-torus over the circle.
While in this section, we only consider the former, we will use the latter
view in Appendix
\ref{Section-ClassificationofFoliations}.

Let $\cH$ denote the Heisenberg group, the group of all real-valued matrices
of the form
\[ \begin{pmatrix}
    1 & x & z \\ 0 & 1 & y \\ 0 & 0 & 1
    \end{pmatrix}.\]
Fix a co-compact subgroup $\Gamma$.  Then $\cH/\Gamma$ is a compact manifold
and every non-toral three-dimensional nilmanifold is of this form.  For
a more detailed introduction, see \cite{HNil}.
Observe that if $\pi:\Heis \to \bbR$ is a Lie group homomorphism, it must be of
the form
\[
    \begin{pmatrix}
    1 & x & z \\ 0 & 1 & y \\ 0 & 0 & 1
    \end{pmatrix}
    \mapsto a x + b y
\]
for constants $a,b \in \bbR$.  If $\pi$ is non-zero, its level sets define a
codimension one foliation $\tilde \FF_\pi$ on $\Heis$ which quotients down to a
foliation $\FF_\pi$ on the nilmanifold $\Heis / \Gamma$.  Plante showed that any
Reebless $C^2$ foliation of $\Heis / \Gamma$ must be almost aligned to $\FF_\pi$ for
some $\pi$ \cite{Plante}.
We need a similar result for foliations which are $C^{1,0+}$, that is, $C^0$
with $C^1$ leaves tangent to a $C^0$ distribution.
We prove the following.

\begin{prop} \label{nilfolns}
    Every Reebless $C^{1,0+}$ foliation on $\Heis / \Gamma$ is almost aligned
    with some $\FF_\pi$.
\end{prop}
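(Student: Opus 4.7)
The plan is to adapt Plante's theorem for $C^2$ foliations \cite{Plante} to the $C^{1,0+}$ setting. Plante's argument for Reebless foliations on $\Heis/\Gamma$ decomposes into two conceptually separate components: first, the existence of a holonomy-invariant transverse measure; and second, the algebraic observation that any such measure on the Heisenberg nilmanifold must come from a Lie group homomorphism. I would verify that both components survive the weakening of regularity.

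First, I would lift $\cF$ to a foliation $\tilde \cF$ of $\Heis$. Since $\Gamma$ is nilpotent and hence amenable, and $\cF$ is Reebless, the standard F\o{}lner-type averaging construction gives that the leaves of $\cF$ have polynomial volume growth. Applying Plante's averaging over F\o{}lner exhaustions in a leaf produces a nontrivial holonomy-invariant transverse measure $\mu$ on $\cF$. The key point is that this construction only uses integration of continuous transverse data along $C^1$ curves within leaves, which is meaningful at the $C^{1,0+}$ regularity in the framework of \cite{CandelConlon}.

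Second, I would use $\mu$ to construct a continuous $\Gamma$-equivariant function $h : \Heis \to \bbR$ whose level sets are unions of leaves of $\tilde \cF$, defined by integrating the transverse measure along any path from a fixed basepoint. The holonomy invariance of $\mu$ ensures $h$ is well-defined, and equivariance under $\Gamma$ gives $h \circ \gamma = h + c(\gamma)$ for a homomorphism $c : \Gamma \to \bbR$. Since $\Gamma$ is a cocompact lattice in $\Heis$ and $[\Heis,\Heis]$ is the center, any homomorphism $\Gamma \to \bbR$ factors through the abelianization and extends uniquely to a Lie group homomorphism $\pi : \Heis \to \bbR$ with $\pi|_\Gamma = c$.

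Finally, the difference $h - \pi$ is $\Gamma$-invariant and descends to a continuous, and therefore bounded, function on the compact quotient $\Heis/\Gamma$. Boundedness of $h - \pi$ translates directly into the assertion that each level set of $h$ (that is, each leaf of $\tilde \cF$) lies in a uniform tubular neighborhood of the corresponding level set of $\pi$ (that is, a leaf of $\tilde \FF_\pi$), which is precisely the almost aligned condition. I expect the principal obstacle to be the first step: verifying that Plante's averaging produces a genuine transverse measure when the leaves are only $C^1$ and the holonomy maps are only $C^0$. Handling this requires being careful about what regularity the averaging argument really needs, and invoking the appropriate versions of Reeblessness and leaf growth available for $C^{1,0+}$ foliations.
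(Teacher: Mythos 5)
Your route is genuinely different from both proofs the paper gives: Appendix \ref{sectionLattices} argues algebraically \`a la Brin--Burago--Ivanov, via an ordering on the deck group and the triviality statement for orderings of the Heisenberg lattice (Proposition \ref{nilorder}), while Appendix \ref{Section-ClassificationofFoliations} follows Plante's original \emph{geometric} scheme, replacing Roussarie's $C^2$ general-position theorem by Gabai's result (Theorem \ref{Teo-Gabai}), cutting along an incompressible torus and invoking the $\TT^3$ classification. What you propose is instead the transverse-invariant-measure (asymptotic cycle) route, and its skeleton is viable: an invariant measure $\mu$, the homomorphism $c:\Gamma\to\bbR$ given by its class, the extension $\pi:\Heis\to\bbR$, and the observation that $h-\pi$ is $\Gamma$-invariant and bounded on a fundamental domain, so every leaf of $\tilde\cF$ (on which $h$ is constant) is trapped in a bounded slab of $\pi$, which is exactly almost alignment. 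This would bypass both the lattice-ordering combinatorics and the torus cut-and-paste. However, as written there are two gaps.

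First, the step producing $\mu$ is stated backwards: amenability of $\Gamma$ and F{\o}lner averaging do not by themselves control the growth of leaves. What is needed is Plante's estimate that for a Reebless foliation the growth of every leaf (of its holonomy pseudogroup orbit) is dominated by the growth of $\pi_1(M)$; this rests on Novikov's theorem in the $C^0$ setting (available via \cite{Solodov}) and on the fact that lifted leaves are properly embedded, separating planes in $\Heis$, so that a lifted transversal meets each lifted leaf at most once --- precisely the input the paper packages into its ``leaf system'' framework. Once some leaf has subexponential growth, Plante's averaging is purely pseudogroup-theoretic and insensitive to the $C^{1,0+}$ regularity, so this part is fixable, but it is where the real adaptation lies and you only flag it. (Two smaller points: you need transverse orientability to speak of signed transverse measure, so pass to a finite cover, which is harmless since almost alignment is checked in $\Heis$; and $h$ need not be continuous when $\mu$ has atoms --- what your last step actually uses is boundedness of $h-\pi$ on a fundamental domain together with $\Gamma$-invariance.)

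Second, and more seriously, you never address why $\pi\neq 0$. The averaged measure comes with no control on its Ruelle--Sullivan class; if $c\equiv 0$, then $\pi$ is the zero homomorphism, $\FF_\pi$ is not even a foliation, and ``each leaf lies in a bounded slab of $\pi$'' is vacuous, so the argument proves nothing. This must be ruled out, for instance as follows: if $\cF$ has no compact leaf, every leaf meets a closed transversal, and a closed transversal through a point of $\supp\mu$ crosses $\cF$ with constant sign, so $c$ is non-zero on its class; if $\cF$ has a compact leaf, by Novikov it is an incompressible torus, and in $\Heis/\Gamma$ every incompressible torus is non-separating (essential tori are vertical; equivalently the nilpotent group $\Gamma$ admits no splitting over $\bbZ^2$), so the atomic transverse measure on that leaf has non-zero class and one runs your argument with that measure instead. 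Without an argument of this kind the proof is incomplete.
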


There are two approaches to proving this.  One is to adapt the proof of Plante
to the $C^{1,0+}$ setting.  The other is to extend the techniques used by
Brin, Burago, and Ivanov on the 3-torus to other manifolds.  Both
approaches work, and since both techniques may be useful in the future if
applied to other manifolds, we give both proofs in the appendices.
Appendix \ref{sectionLattices} gives a self contained proof of this result using some algebraic arguments \`a la Brin-Burago-Ivanov.  Appendix \ref{Section-ClassificationofFoliations} provides a more geometric proof which uses the classification of such foliations in $\TT^3$ and some general position results (\cite{Roussarie,Gabai}) and gives a classification for every torus bundle over the circle (this will be used in \cite{HP}). 
In this section, we assume Proposition \ref{nilfolns} and show how it can be
used to prove Theorems \ref{nildyn} and \ref{nilconj}.

\begin{remark}
    To keep the presentation short, we only reprove those parts of
    \cite{HNil} which rely on absolute partial hyperbolicity.  This has the
    unfortunate consequence that the proof of dynamical coherence for the
    pointwise case is split between different papers and is not presented
    in one place from start to finish.
    As such, we present here a rough outline of the proof as a whole.

    If $J$ is a small unstable curve on the universal cover $\Heis$, then the
    length of $\tilde f^{k}(J)$ grows exponentially fast.  By the results
    of Brin, Burago, and Ivanov, $U_1(\tilde f^{k}(J))$ also
    grows exponentially fast in volume.  Here, $U_1(X)$ is all points at
    distance at most 1 from a point in $X$.  Comparing $\tilde f$ with its
    algebraic part $\Phi=\Phi_f:\Heis \to \Heis$,
    one can prove that such exponential expansion is only possible if $\tilde
    f^{k}(J)$ lies more-or-less in the unstable direction of $\Phi$.
    Since these unstable curves lie in leaves of the
    branching foliation $\tFFcub$, it follows from Proposition
    \ref{nilfolns} that $\tFFcub$ is almost aligned with the center-unstable
    foliation of $\Phi$ as no other foliations of the form $\tFF_\pi$
    have long curves in the same direction.

    Similarly, $\tFFcsb$ is almost aligned with the center-stable foliation
    of $\Phi$ and
    there is $R > 0$ such that
    if $p$ and $q$ lie on the same
    leaf of $\tFFcsb$ then $q$ is at distance at most $R$ from
    ${\tilde W}^{cs}_{\Phi}(p)$.
    If $\FFcsb$ is genuinely branching and not a true foliation,
    there are two distinct leaves of $\tFFcsb$ which
    intersect in a common point $p$ and one can find a small unstable arc $J$
    which lies in the space between these two leaves.
    For all $k>0$, $\tilde f^k(J)$ lies between two leaves of $\tFFcsb$ which pass
    through the point $\tilde f^k(p)$ and so all points in $f^k(J)$ are at
    distance at most $R$ from ${\tilde W}^{cs}_{\Phi}({f^k(p)})$.
    This contradicts the above-stated fact that $f^k(J)$ must grow
    exponentially fast in the unstable direction of $\Phi$.
    Thus, $\FFcsb$ is a non-branching foliation, and $f$ is dynamically
    coherent.  Again, we stress that this is a non-rigorous outline, and the
    rigorous proof follows below.
\end{remark}
\subsection{The orientable case}

Suppose $f: \Heis/\Gamma \to \Heis/\Gamma$ is partially hyperbolic.
To keep things simple, we make the unjustified assumption that the subbundles
$\Eu$, $\Ec$, $\Es$ are oriented, and that $Df$ preserves these orientations.
The next subsection shows how the general case can be deduced from this
special case.
Under these assumptions, Theorem \ref{branfol} applies and there are
$f$-invariant branching foliations $\FFcub$ and $\FFcsb$ tangent to $\Ecu$ and
$\Ecs$.

\begin{lemma} \label{nilalign}
    $\FFcub$ is almost aligned with some $\FF_\pi$.
\end{lemma}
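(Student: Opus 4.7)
My plan is to deduce the lemma by chaining Theorem \ref{nearfol} with Proposition \ref{nilfolns}, and then exploiting transitivity of the almost-alignment relation. Since the standing orientability hypotheses ensure that Theorem \ref{branfol} applies, Theorem \ref{nearfol} supplies a non-branching $C^{1,0+}$ Reebless foliation $\cW^{cu}$ on $\Heis/\Gamma$ which is mutually almost aligned with $\FFcub$; Reeblessness here comes for free from transversality to the strong stable foliation, as discussed just before Theorem \ref{nearfol}.

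Because $\cW^{cu}$ is then a Reebless $C^{1,0+}$ foliation on the non-toral nilmanifold $\Heis/\Gamma$, Proposition \ref{nilfolns} yields a nonzero Lie group homomorphism $\pi : \Heis \to \bbR$ such that $\cW^{cu}$ is almost aligned with the linear foliation $\FF_\pi$. To conclude, I would observe that almost alignment is transitive: if each leaf of $\tilde{\FF}_1$ lies within distance $R_1$ of some leaf of $\tilde{\FF}_2$, and each leaf of $\tilde{\FF}_2$ lies within distance $R_2$ of some leaf of $\tilde{\FF}_3$, then by the triangle inequality each leaf of $\tilde{\FF}_1$ lies within distance $R_1+R_2$ of some leaf of $\tilde{\FF}_3$. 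Applying this to the chain ``$\FFcub$ almost aligned with $\cW^{cu}$, and $\cW^{cu}$ almost aligned with $\FF_\pi$'' gives the desired alignment of $\FFcub$ with $\FF_\pi$.

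The main obstacle has essentially been offloaded to the two inputs: all the substantive work lives in Proposition \ref{nilfolns} (proved in the appendices) and in the Burago--Ivanov approximation Theorem \ref{nearfol}. Given those, the lemma is a one-step combination, and the only point that might deserve a second's scrutiny is verifying that the approximating foliation $\cW^{cu}$ is genuinely Reebless so that Proposition \ref{nilfolns} is available; this is guaranteed by the transversality property built into the Burago--Ivanov construction. I note that this argument gives only ``some'' $\pi$, which matches the lemma's statement; identifying $\pi$ concretely with the center-unstable direction of the algebraic part $\Phi_f$---the sharper statement alluded to in the outline remark---would require the additional dynamical input about exponential growth of unstable arcs, and is presumably a separate step.
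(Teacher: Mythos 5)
Your proof is correct and is essentially the paper's argument: the paper's proof of this lemma is exactly the one-line combination of Theorem \ref{nearfol} (mutual almost alignment of $\FFcub$ with a Reebless $C^{1,0+}$ foliation) and Proposition \ref{nilfolns}, using the transitivity of almost alignment noted in Section \ref{SectionBranching}.
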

\begin{proof}
    This follows immediately from Proposition \ref{nilfolns} and Theorem
    \ref{nearfol}.
\end{proof}
If $\pi:\Heis \to \bbR$ is a Lie group homomorphism, there is $C>0$ such that
\[
    | \pi(p) - \pi(q) | < C d(p,q)
\]
for all $p,q \in \Heis$.  This can be shown by adapting the proof of (4.2) in
\cite{HNil}.  With this, Lemma \ref{nilalign} can be restated thus.

\begin{lemma} \label{nilbounds}
    There is a Lie group homomorphism $\pi:\Heis \to \bbR$ and $R > 0$ such that
    $|\pi(p)-\pi(q)|<R$ for any two
    points $p$ and $q$ on the same leaf of $\tFFcub$.
\end{lemma}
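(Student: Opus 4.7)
The plan is to combine the two ingredients already in hand: Lemma \ref{nilalign} (almost alignment of $\FFcub$ with some $\FF_\pi$), and the Lipschitz-type estimate $|\pi(p) - \pi(q)| < C\, d(p, q)$ for Lie group homomorphisms $\pi:\Heis \to \bbR$ just noted above the statement. Almost alignment lifts to the universal cover and says that each leaf of $\tFFcub$ lies in the $R_0$-neighborhood of a single leaf of $\tFF_\pi$, for some uniform $R_0 > 0$. Since leaves of $\tFF_\pi$ are precisely level sets of $\pi$, any such witnessing leaf carries a constant value of $\pi$, and the desired bound follows from a triangle inequality.

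More concretely, given $p$ and $q$ on a common leaf $L$ of $\tFFcub$, I would fix a leaf $L'$ of $\tFF_\pi$ with $L$ contained in the $R_0$-neighborhood of $L'$, and choose $p', q' \in L'$ with $d(p, p') < R_0$ and $d(q, q') < R_0$. Since $L'$ is a level set of $\pi$, there is a constant $c$ with $\pi(p') = \pi(q') = c$. The Lipschitz bound then yields $|\pi(p) - c| < C R_0$ and $|\pi(q) - c| < C R_0$, whence
\[
|\pi(p) - \pi(q)| \leq |\pi(p) - c| + |c - \pi(q)| < 2 C R_0.
\]
Setting $R := 2 C R_0$ closes the argument.

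There is no real obstacle in this combination; the lemma is essentially a rephrasing of Lemma \ref{nilalign} in the concrete language of the projection $\pi$, and this reformulation is exactly what will be useful later when one wants to compare $\tFFcub$ with the center-unstable foliation of the algebraic part $\Phi$. The only step whose proof lies outside the immediate argument is the Lipschitz estimate $|\pi(p)-\pi(q)| < C\, d(p,q)$, obtained by adapting (4.2) of \cite{HNil}; once that input is granted, the three lines above constitute the entire proof.
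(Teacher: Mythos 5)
Your argument is exactly the paper's: the paper presents Lemma \ref{nilbounds} as a restatement of Lemma \ref{nilalign} via the Lipschitz estimate $|\pi(p)-\pi(q)| < C\,d(p,q)$ adapted from (4.2) of \cite{HNil}, which is precisely the triangle-inequality combination you spell out. Your proposal is correct and fills in the same short computation the paper leaves implicit.
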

Almost all of the work in \cite{HNil} carries over immediately to the
pointwise partially hyperbolic case.
In particular, the results in Section 4 of that paper up to and
including (4.8) also hold in our current setting.
That section defines two projections $\pi^s, \pi^u: \Heis \to \bbR$ which are Lie
group homomorphisms, and from their special properties, we can show the
following.

%By (4.3), there is a projection $\pi^s:\Heis \to \bbR$ (which is a Lie group
%homomorphism) and a constant $x_0 > 0$ such that $|\pi^s(p)| < x_0$ implies
%$|\pi^s f(p)| < x_0$ for all $p \in \Heis$.
%As a consequence, we can find a small unstable segment $J$ such that
%$|\pi^s(p)| < x_0$ for all $n>0$ and $p \in f^n(J)$.
%%By (4.8) of \cite{HNil}, $\diam f^n(J) \to \infty$.
%By (4.8) of \cite{HNil}, as $n$ increases the diameter of $f^n(J)$ goes to
%infinity.
%Each of the curves
%$f^n(J)$ is contained in a leaf of the lifted foliation $\tFFcub$.
%By these estimates, one can see that the only way Lemma \ref{nilbounds} can
%hold is if the homomorphism $\pi$ is equal to $\pi^s$, up to a constant factor.

\begin{lemma} \label{cubdd}
    There is $R>0$ such that $|\pi^s(p) - \pi^s(q)| < R$ for all $p$ and $q$ on
    the same leaf of $\tFFcub$.
\end{lemma}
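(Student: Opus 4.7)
The plan is to combine Lemma \ref{nilbounds}---which bounds \emph{some} homomorphism $\pi$ along leaves of $\tFFcub$---with the dynamical control of $\pi^s, \pi^u$ under $\tilde f$ established in Section 4 of \cite{HNil} to isolate $\pi^s$ as the source of that bound.

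I would first decompose $\pi = a\pi^s + b\pi^u$, using that the space of Lie group homomorphisms $\Heis \to \bbR$ is two-dimensional and spanned by $\pi^s, \pi^u$. The key dynamical inputs from Section 4 of \cite{HNil} are the cohomological identities stating that $\pi^\alpha \circ \tilde f - \lambda^\alpha \pi^\alpha$ is a bounded function for $\alpha \in \{s,u\}$, where $|\lambda^s| < 1 < |\lambda^u|$ are the eigenvalues of the algebraic part $\Phi_f$ on the corresponding characters. Summing the resulting telescoping series gives the uniform estimates
\[
  \pi^s(\tilde f^{-n}x) = (\lambda^s)^{-n}\pi^s(x) + O\bigl(|\lambda^s|^{-n}\bigr), \qquad
  \pi^u(\tilde f^{-n}x) = (\lambda^u)^{-n}\pi^u(x) + O(1),
\]
valid for all $x \in \Heis$ and $n \geq 0$ (with constants uniform in $x$ and $n$).

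Now fix $p, q$ on a common leaf of $\tFFcub$ and set $\Delta_\alpha = \pi^\alpha(p) - \pi^\alpha(q)$. By $\tilde f$-invariance of $\tFFcub$ combined with Lemma \ref{nilbounds},
\[
  \bigl| a\Delta_s (\lambda^s)^{-n} + b\Delta_u (\lambda^u)^{-n} + e_n \bigr| < R
\]
for all $n \geq 0$, where $|e_n| \leq C|\lambda^s|^{-n}$. Since $|(\lambda^u)^{-n}| \leq 1$ for $n \geq 0$, the middle term is bounded by $|b\Delta_u|$, and assuming $a \neq 0$ we obtain $\bigl(|a\Delta_s| - C\bigr) |\lambda^s|^{-n} \leq R + |b\Delta_u|$; letting $n \to \infty$ forces $|\Delta_s| \leq C/|a|$, a bound that is uniform in $p, q$.

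The main obstacle is ruling out $a = 0$. If $a = 0$, then $\pi$ is a nonzero multiple of $\pi^u$, so $\FFcub$ is almost aligned with $\FF_{\pi^u}$, which lifts to the algebraic center-stable foliation of $\Phi_f$. By the analog of Theorem \ref{branfol} for $\FFcub$, every unstable arc of $\tilde f$ lies in a single leaf of $\tFFcub$ and would thus be trapped in a bounded $\pi^u$-neighborhood. This contradicts the result of Section 4 of \cite{HNil} that $\pi^u$ is unbounded along unstable curves of $\tilde f$ (whose length expands at rate $|\lambda^u|^n$ under forward iteration). Thus $a \neq 0$, and the main argument goes through.
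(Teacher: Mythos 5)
Your proof is correct, and it takes a genuinely different route from the paper's. The paper also starts from Lemma \ref{nilbounds} and the decomposition $\pi = a\pi^s + b\pi^u$, but it then proves outright that $b=0$: by (4.3) of \cite{HNil} one chooses a small unstable segment $J$ with $|\pi^s|<x_0$ on all forward iterates $\tilde f^n(J)$, while by (4.7) the diameter of $\pi^u \tilde f^n(J)$ tends to infinity; since each $\tilde f^n(J)$ lies in a single leaf of $\tFFcub$, the boundedness of $\pi$ along leaves forces $b=0$, so $\pi$ is a non-zero multiple of $\pi^s$ and the lemma is immediate. You instead only rule out $a=0$ (by essentially the same contradiction with the unboundedness of $\pi^u$ along iterated unstable curves) and then, rather than identifying $\pi$ with $\pi^s$, you run a backward-iteration argument: the estimates $\pi^\alpha\circ\tilde f=\lambda^\alpha\pi^\alpha+O(1)$ make the $\pi^s$-difference of $\tilde f^{-n}p$, $\tilde f^{-n}q$ expand like $|\lambda^s|^{-n}$ while the $\pi^u$-difference and the error terms remain dominated, so the uniform bound $R$ forces $|\pi^s(p)-\pi^s(q)|$ to be uniformly small whenever $a\neq 0$. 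What your route buys is that you never need $b=0$ (though it follows a posteriori) nor the special segment furnished by (4.3); what it costs is the quantitative cohomological bounds, which are not quoted in this paper but are legitimately available: they follow from $\pi^\alpha\circ\Phi_f=\lambda^\alpha\pi^\alpha$, the Lipschitz property of homomorphisms (the adaptation of (4.2) of \cite{HNil} stated before Lemma \ref{nilbounds}), and $\sup_x d(\tilde f(x),\Phi_f(x))<\infty$. One minor caveat: your parenthetical that unstable curves expand at rate $|\lambda^u|^n$ is not literally justified in the pointwise setting; but it is inessential, since the fact you actually use is exactly the (4.7)-type statement that the paper itself invokes, so there is no gap.
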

\begin{proof}
    By (4.3) of \cite{HNil}, there is a constant $x_0 > 0$ such that
    $|\pi^s(p)| < x_0$ implies $|\pi^s \tilde f(p)| < x_0$ for all $p \in \Heis$.
    As a consequence, we can find a small unstable segment $J$ such that
    $|\pi^s(p)| < x_0$ for all $n>0$ and $p \in \tilde f^n(J)$.
    %By (4.8) of \cite{HNil}, $\diam \tilde f^n(J) \to \infty$.
    By (4.7), the diameter of $\pi^u \tilde f^n(J)$ as a subset of $\bbR$ tends to
    infinity as $n \to \infty$.
    Each of the curves
    $\tilde f^n(J)$ is contained in a leaf of the lifted foliation $\tFFcub$.
    Further, it is not hard to show that the homomorphism $\pi:\Heis \to \bbR$
    given by Lemma \ref{nilbounds} above can be written as a linear
    combination, $\pi = a \pi^s + b \pi^u$.  The only way all of these estimates
    can hold is if $b=0$.  The result then follows.
\end{proof}

%\begin{remark} The proof of (4.7) in \cite{HNil} is also similar to the one of Lemma 8.8 of \cite{Pot}. Both are based in a beautiful argument introduced in \cite{BBI}: Once you obtain that the diameter of a set grows polynomially (in this case, this is done by showing that points remain ``close'' to the center-stable direction of the algebraic part) one obtains, due to the polynomial growth of volume, that the volume of a neighborhood of such a set also grows polynomially. Using Novikov's Theorem (see \cite{CandelConlon}) and Theorem \ref{nearfol} it is possible to show that the volume of neighborhood of an unstable curve is comparable to its length, so, we know that by iteration the volume of a neighborhood of an unstable curve will grow exponentially. See the references for more details, the argument we sketched here also works for proving Lemma \ref{endsunbdd} below. 
%\end{remark}

%The paper \cite{HNil} also defines a projection $\pi^u:\Heis \to \bbR$ and, by
%repeating the above arguments for $f \inv$ in place of $f$, the following
%holds.
By repeating the arguments of Lemma \ref{cubdd} for $f \inv$ in place of $f$, the following
also holds.

\begin{lemma} \label{csbdd}
    There is $R>0$ such that $|\pi^u(p) - \pi^u(q)| < R$ for all $p$ and $q$ on the
    same leaf of $\tFFcsb$.
\end{lemma}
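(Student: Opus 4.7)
The plan is to derive Lemma \ref{csbdd} directly from Lemma \ref{cubdd} by applying the latter to $f^{-1}$ in place of $f$, exploiting the symmetry of the partially hyperbolic setup under time reversal. Concretely, $f^{-1}$ is again a pointwise partially hyperbolic diffeomorphism of $\Heis/\Gamma$, with $E^u_{f^{-1}}=E^s_f$, $E^s_{f^{-1}}=E^u_f$, and $E^c_{f^{-1}}=E^c_f$. Correspondingly, the branching foliations produced by Theorem \ref{branfol} satisfy $\FF^{cu}_{bran}(f^{-1}) = \FFcsb(f)$ and $\FF^{cs}_{bran}(f^{-1}) = \FFcub(f)$.

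Next, I would check that the construction of the homomorphisms $\pi^s,\pi^u:\Heis \to \bbR$ from Section 4 of \cite{HNil} is symmetric under the swap $f \leftrightarrow f^{-1}$. Since the algebraic part satisfies $\Phi_{f^{-1}} = (\Phi_f)^{-1}$, the stable and unstable directions of $\Phi$ simply exchange, and therefore the two projections exchange as well: $\pi^s_{f^{-1}} = \pi^u_f$ and $\pi^u_{f^{-1}} = \pi^s_f$ (up to inessential signs). In particular, the properties (4.3) and (4.7) of \cite{HNil} that were invoked in the proof of Lemma \ref{cubdd} — namely forward invariance of a strip $|\pi^s|<x_0$ under $\tilde f$ and unbounded growth of $\pi^u \tilde f^n(J)$ for an unstable segment $J$ — transform respectively into forward invariance of $|\pi^u|<x_0$ under $\tilde f^{-1}$ and unbounded growth of $\pi^s \tilde f^{-n}(J')$ for an unstable segment $J'$ of $f^{-1}$ (that is, a stable segment of $f$).

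Once these identifications are in place, Lemma \ref{cubdd} applied to $f^{-1}$ says: there is $R>0$ such that $|\pi^s_{f^{-1}}(p) - \pi^s_{f^{-1}}(q)| < R$ for all $p,q$ on the same leaf of $\tFF^{cu}_{bran}(f^{-1})$. Reading this back through the correspondence $\pi^s_{f^{-1}} = \pi^u_f$ and $\tFF^{cu}_{bran}(f^{-1}) = \tFFcsb(f)$ yields exactly the desired bound $|\pi^u(p)-\pi^u(q)|<R$ on leaves of $\tFFcsb$.

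The only real verification to be done is the symmetry claim in the second paragraph: that all ingredients used in the proof of Lemma \ref{cubdd} — in particular, Lemma \ref{nilalign} and the definitions of $\pi^s,\pi^u$ in \cite{HNil} — are genuinely invariant under replacing $f$ with $f^{-1}$ up to swapping the roles of stable and unstable. Since these constructions rest on the partially hyperbolic splitting itself (which is manifestly symmetric under inversion) and on Proposition \ref{nilfolns} (which applies to any Reebless $C^{1,0+}$ foliation, irrespective of which diffeomorphism preserves it), no new analytical work is required; this is the bulk of the would-be obstacle, but it is more a matter of bookkeeping than of substance.
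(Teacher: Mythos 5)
Your proposal is correct and is essentially the paper's own argument: the paper proves Lemma \ref{csbdd} precisely by repeating the argument of Lemma \ref{cubdd} with $f^{-1}$ in place of $f$, relying on the same symmetry that swaps $E^s$ with $E^u$, $\FFcsb$ with $\FFcub$, and $\pi^u$ with $\pi^s$. You have merely spelled out the bookkeeping that the paper leaves implicit, and that bookkeeping is sound since $\FFcsb$ is $f^{-1}$-invariant and tangent to $E^{cu}_{f^{-1}}$, so all ingredients (Lemma \ref{nilalign} and the properties of $\pi^s,\pi^u$) apply verbatim.
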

This is a non-dynamically coherent analogue of (4.10) of \cite{HNil}.
Using this version, we may repeat the proofs of (4.11), (4.12), and (4.13) of
\cite{HNil}, taking $\Wcs(p)$ to mean
\emph{any} leaf of $\tFFcsb$ which passes through the point $p$.  We state the
reformulations of (4.12) and (4.13) explicitly.

\begin{lemma} \label{halfgps}
    Every leaf of $\tFFcsb$ intersects every leaf of ${\tilde \cW}^u$ exactly
    once.
\end{lemma}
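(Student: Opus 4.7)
The plan is to prove uniqueness and existence separately, each time exploiting the complementary bounds given by Lemmas~\ref{cubdd} and~\ref{csbdd} together with the length-versus-$\pi^u$-diameter estimate (4.7) of \cite{HNil}. This last estimate survives the passage from absolute to pointwise partial hyperbolicity, since only its infinitary content will be used.

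For \emph{uniqueness}, I would suppose that $p \ne q$ both lie in ${\tilde \cW}^u(x) \cap L$ for some leaf $L$ of $\tFFcsb$ and let $J \subset {\tilde \cW}^u(x)$ be the unstable arc joining them. Since $\tFFcsb$ is $\tilde f$-invariant, the points $\tilde f^n(p)$ and $\tilde f^n(q)$ both lie on the leaf $\tilde f^n(L)$ of $\tFFcsb$ for every $n \ge 0$, while the unstable arc joining them is $\tilde f^n(J)$. By (4.7) of \cite{HNil}, $\diam \pi^u(\tilde f^n(J)) \to \infty$, so
\[
    \bigl|\pi^u(\tilde f^n(p)) - \pi^u(\tilde f^n(q))\bigr| \to \infty,
\]
contradicting the uniform bound $R$ from Lemma~\ref{csbdd} applied to $\tilde f^n(L)$.

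For \emph{existence}, I would fix a leaf $L$ of $\tFFcsb$ and consider
\[
    S = \bigl\{\, y \in \Heis : {\tilde \cW}^u(y) \cap L \ne \emptyset \,\bigr\}.
\]
The set $S$ contains $L$ and is open by local transversality of ${\tilde \cW}^u$ and $\tFFcsb$ (the bundles $\Eu$ and $\Ecs$ being transverse). To conclude $S = \Heis$ from connectedness of $\Heis$, it remains to verify that $S$ is closed. Given $y_n \to y$ with $q_n \in {\tilde \cW}^u(y_n) \cap L$, let $s_n$ denote the unstable arc length from $y_n$ to $q_n$. If $s_n \to \infty$, then by applying a version of (4.7) to the unstable arcs from $y_n$ to $q_n$ one obtains $|\pi^u(q_n) - \pi^u(y_n)| \to \infty$, contradicting boundedness of $\pi^u(y_n)$ (which converges to $\pi^u(y)$) together with the bound on $\pi^u(q_n)$ from Lemma~\ref{csbdd} applied to $L$. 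Hence $s_n$ is bounded, a subsequence of $(q_n)$ converges to some $q$, closedness of $L$ (a consequence of Burago--Ivanov completeness combined with transversality to $\Eu$) gives $q \in L$, and continuity of ${\tilde \cW}^u$ gives $q \in {\tilde \cW}^u(y)$.

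The main obstacle I anticipate is upgrading (4.7) from the statement ``$\diam \pi^u(\tilde f^n(J)) \to \infty$ for a fixed small arc $J$'' to the quantitative version ``any unstable arc of length at least $L_0(D)$ has $\pi^u$-diameter at least $D$,'' which is what is used in both parts above. This should follow from the fact that every long unstable arc is a forward iterate of a bounded one, combined with the uniform domination inequalities, but it is precisely the step where care is needed when translating the argument from the absolute to the pointwise setting.
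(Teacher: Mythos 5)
Your overall architecture (uniqueness by iterating forward and playing Lemma~\ref{csbdd} against growth of $\pi^u$ along unstables; existence by an open--closed argument on the set of points whose unstable leaf meets a fixed leaf $L$) is sensible, but both halves hinge on an estimate you do not actually have: that the \emph{endpoints} of a long unstable arc are far apart under $\pi^u$. What (4.7) of \cite{HNil} gives --- and what your proposed ``quantitative upgrade'' at the end still only asserts --- is that the $\pi^u$-\emph{diameter} of the arc is large. That is strictly weaker: an unstable arc could sweep a huge range of $\pi^u$ and fold back so that its endpoints project close together, and neither (4.7) nor ``a long arc is a forward iterate of a bounded one plus domination'' rules this folding out. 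Your uniqueness step uses the endpoint statement when passing from $\diam \pi^u(\tilde f^n(J)) \to \infty$ to $|\pi^u(\tilde f^n(p)) - \pi^u(\tilde f^n(q))| \to \infty$, and your closedness step uses it to bound the arc lengths $s_n$; with only the diameter version no contradiction with Lemma~\ref{csbdd} arises, because the points of the arc realizing the large diameter need not lie on the leaf of $\tFFcsb$.

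The statement you actually need is exactly Lemma~\ref{endsunbdd} (i.e.\ (4.13) of \cite{HNil}), so one might hope simply to invoke it; but that is where the circularity risk lies. In this family of arguments the endpoint/quasi-isometry type estimates are normally \emph{deduced from} the product structure, not used to prove it --- see in this very paper Theorem~\ref{Teo-Potrie}, where item (iv) (quasi-isometry of $\tilde\cW^u_f$) is obtained only after items (ii)--(iii) (alignment and global product structure), and see that in Section~\ref{SectionNilm} Lemma~\ref{endsunbdd} is stated after Lemma~\ref{halfgps} and imported from (4.13), which in \cite{HNil} follows (4.11)--(4.12). So quoting Lemma~\ref{endsunbdd} here would require exhibiting a proof of it independent of the crossing properties you are trying to establish, and you do not address this. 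The paper itself proceeds differently: Lemma~\ref{halfgps} is proved by repeating the proof of (4.12) of \cite{HNil} verbatim, with Lemma~\ref{csbdd} substituted for (4.10) and $\Wcs(p)$ read as any leaf of $\tFFcsb$ through $p$; the endpoint estimate is a separate consequence used only afterwards (for dynamical coherence). Hence the missing ingredient in your write-up is a genuine gap, not a routine translation from the absolute to the pointwise setting.
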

\begin{lemma} \label{endsunbdd}
    For $M>0$ there is $\ell>0$ such that for any unstable curve $J$ of
    length greater than $\ell$, the endpoints $p$ and $q$ satisfy
    $
        |\pi^u(p) - \pi^u(q)| > M.
    $  \end{lemma}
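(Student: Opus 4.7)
The plan is to argue by contradiction, following the strategy of (4.13) in \cite{HNil}. Suppose there exist $M>0$ and a sequence of unstable curves $J_n$ with $\length(J_n)\to\infty$ but endpoints $p_n,q_n$ satisfying $|\pi^u(p_n)-\pi^u(q_n)|\le M$.

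First I would iterate backward to reduce to bounded length. Since $\tilde f$ uniformly expands along $E^u$, there exist $k_n\to\infty$ with $\length(\tilde f^{-k_n} J_n)\in [L_0,\mu L_0]$ for some fixed small $L_0$. The iterative relation $\pi^u\circ\tilde f=\lambda_u\pi^u+b$ with $b$ uniformly bounded (a consequence, developed in Section 4 of \cite{HNil}, of $\pi^u\circ\Phi_f=\lambda_u\pi^u$ together with the fact that $\tilde f$ is at bounded distance from its algebraic part $\Phi_f$) telescopes to give the uniform estimate $|\pi^u(\tilde f^{-k_n}p_n)-\pi^u(\tilde f^{-k_n}q_n)|\le M+2\|b\|_\infty/(\lambda_u-1)$. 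Using the $\Gamma$-equivariance $\tilde f\gamma=f_*(\gamma)\tilde f$ of lifts and the fact that $\pi^u$ is a homomorphism (so $\pi^u$-endpoint differences are $\Gamma$-invariant), I translate $\tilde f^{-k_n} J_n$ by a suitable $\gamma_n\in\Gamma$ into a fixed compact fundamental domain; by the uniform length bound the translated endpoints remain in a compact set, and on passing to a $C^1$-convergent subsequence I obtain a limit unstable curve $J_\infty$ of length in $[L_0,\mu L_0]$.

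Next I would apply (4.7) of \cite{HNil} to the fixed curve $J_\infty$ to obtain $\diam(\pi^u\tilde f^k(J_\infty))\to\infty$ as $k\to\infty$. Combined with the recursive estimate $\diam(\pi^u\tilde f^{k+1}(X))\ge\lambda_u\diam(\pi^u\tilde f^k(X))-2\|b\|_\infty$ coming from the same iterative relation, once the diameter surpasses the threshold $2\|b\|_\infty/(\lambda_u-1)$ at some $K_0$ it grows exponentially thereafter. By $C^1$-convergence of the translated preimages $\gamma_n\tilde f^{-k_n} J_n\to J_\infty$, the iterate at time $K_0$ also surpasses the threshold for $n$ large; combined with $k_n\to\infty$ and the exponential growth, this forces $\diam(\pi^u\tilde f^{k_n}(\gamma_n\tilde f^{-k_n} J_n))\to\infty$. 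Since $\tilde f^{k_n}(\gamma_n\tilde f^{-k_n} J_n)=f_*^{k_n}(\gamma_n)\cdot J_n$ and $\pi^u$ of a left-translate differs from $\pi^u$ by an additive constant, we conclude $\diam(\pi^u(J_n))\to\infty$.

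Finally, quasi-monotonicity of $\pi^u$ along unstable leaves---the branching analog of (4.11) of \cite{HNil}, which should follow from Lemma \ref{csbdd}, the global product structure of Lemma \ref{halfgps}, and the quasi-isometry of $\tilde\cW^u$---yields a constant $C$ with $\diam(\pi^u(J))\le|\pi^u(p)-\pi^u(q)|+C$ for every unstable subarc with endpoints $p,q$. Applied to $J_n$ this gives $|\pi^u(p_n)-\pi^u(q_n)|\ge\diam(\pi^u(J_n))-C\to\infty$, contradicting the hypothesis $|\pi^u(p_n)-\pi^u(q_n)|\le M$. The main obstacle in carrying out this plan is precisely this last step: upgrading the blow-up of $\pi^u$-diameter to a blow-up of $\pi^u$-endpoint difference, i.e.\ verifying that $\pi^u$ cannot perform arbitrarily large ``excursions'' along an unstable leaf without those excursions being reflected at the endpoints. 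Once quasi-monotonicity is in place, the remaining compactness and iteration steps go through routinely.
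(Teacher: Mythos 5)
Your renormalization half of the argument (pull $J_n$ back to bounded length, translate by deck transformations, take a $C^1$ limit, apply (4.7) of [HNil] to the limit curve, and propagate the diameter growth forward using $\pi^u\circ\tilde f=\lambda_u\pi^u+O(1)$ and $\Gamma$-equivariance of $\pi^u$-diameters) is sound, and it correctly yields $\diam\pi^u(J_n)\to\infty$. (The telescoped bound on the pulled-back endpoints is superfluous, since bounded length plus the Lipschitz property of $\pi^u$ already gives it, but that is harmless.) The genuine gap is exactly the step you defer: the ``quasi-monotonicity'' inequality $\diam\pi^u(J)\le|\pi^u(p)-\pi^u(q)|+C$ is not proved, only asserted to ``follow from'' Lemmas \ref{csbdd} and \ref{halfgps} and quasi-isometry of $\tilde\cW^u$. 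But this upgrade from diameter blow-up to endpoint blow-up is the actual content of the lemma -- a priori the curve could make a huge $\pi^u$-excursion and return, or drift in the central direction -- and it is precisely the part of (4.11)--(4.13) of [HNil] that the present paper says must be re-run with leaves of the branching foliation $\tFFcsb$ in place of genuine center-stable leaves. Note also that quasi-isometry of $\tilde\cW^u$ is neither established at this point in the nilmanifold setting nor the relevant ingredient.

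The step can be closed, and with the ingredients you partly name, as follows. Let $x\in J$ be a point where $\pi^u|_J$ attains its maximum and suppose $\pi^u(x)>\max(\pi^u(p),\pi^u(q))+R$, with $R$ from Lemma \ref{csbdd}. Pick any leaf $L$ of $\tFFcsb$ through $x$; it is a properly embedded plane in $\Heis$ (it stays within bounded $\pi^u$-distance of a plane of the algebraic foliation), so it separates $\Heis$ into two components. By Lemma \ref{halfgps} the unstable leaf containing $J$ meets $L$ exactly once, namely at $x$, and since $J$ is transverse to $E^{cs}$ this intersection is a topological crossing; hence the subarcs of $J$ from $p$ to $x$ and from $x$ to $q$ lie in different components of $\Heis\setminus L$, so $p$ and $q$ are separated by $L$. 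On the other hand, by Lemma \ref{csbdd} the leaf $L$ lies in $\{\,|\pi^u-\pi^u(x)|\le R\,\}$, while $p$ and $q$ both lie in $\{\pi^u<\pi^u(x)-R\}$, which is connected (it is a union of cosets of the connected kernel of $\pi^u$ over a ray) and disjoint from $L$ -- a contradiction. Thus $\max_J\pi^u\le\max(\pi^u(p),\pi^u(q))+R$, and symmetrically for the minimum, giving $\diam\pi^u(J)\le|\pi^u(p)-\pi^u(q)|+2R$. With this in hand your contradiction argument goes through and is essentially the argument the paper invokes from Section 4 of [HNil].
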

\begin{proof}
    [Proof of dynamical coherence.]
    We now prove dynamical coherence by showing that the ``branching''
    foliation $\tFFcsb$ does not actually branch.
    Suppose instead that two distinct leaves $L_1$ and $L_2$ pass through a
    point $p \in \Heis$ and take $q_1 \in L_1 \setminus L_2$.  By Lemma
    \ref{halfgps}, there is $q_2 \in L_2 \cap {\tilde W}^u(q_1)$.
    By Lemma \ref{endsunbdd}, $|\pi^u \tilde f^n(q_1) - \pi^u \tilde f^n(q_2)| \to \infty$
    as $n \to \infty$.
    By Lemma \ref{csbdd}, $|\pi^u \tilde f^n(q_1) - \pi^u \tilde f^n(q_2)| < 2R$ for all $n$, a
    contradiction.
    
    This same argument allows us to show that there is a unique $f$-invariant
    foliation.
    If $\cW^{cs}_1$ and $\cW^{cs}_2$ are distinct $f$-invariant foliations,
    there are distinct leaves $L_1$ and $L_2$ from the lifted foliations
    which pass through a common point $p \in \Heis$.
    Applying Lemmas \ref{endsunbdd} and \ref{csbdd} as above again gives a
    contradiction.
\end{proof}
\begin{proof}
    [Proof of leaf conjugacy.]
    As noted in \cite{HNil} (see the Remark after Lemma 4.10), only the proofs of (4.9) and (4.10) rely on
    absolute partial hyperbolicity.  The first of these is dynamical
    coherence, proved above, and once dynamical coherence is established,
    (4.10) is equivalent to Lemma \ref{csbdd}.  As such, the entire proof of
    leaf conjugacy now holds in the pointwise case.
\end{proof}

\subsection{The general case} %{{{1

The last subsection assumed the subbundles $\Eu$, $\Ec$ and $\Es$ had
orientations which were preserved by the derivative $Df$ of $f$.
Now consider the general case,
where the assumption does not necessarily hold.
For convenience, we simply say that a diffeomorphism preserves the orientation
of a bundle if its derivative preserves the orientation.

\begin{prop} \label{niloriented}
    If $f:\Heis/\Gamma \to \Heis/\Gamma$ is partially hyperbolic, then $\Eu$, $\Ec$,
    and $\Es$ are orientable.
\end{prop}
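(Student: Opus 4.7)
The plan is to prove orientability in two stages: first identify $E^c$ with the canonical fiber direction of the $S^1$-bundle structure on $M$, then use a finite-cover argument together with Proposition \ref{nilfolns} to handle $E^u$ and $E^s$.

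For $E^c$, write $V \subset TM$ for the line bundle tangent to the fibers of the bundle projection $\pi:M \to \TT^2$ coming from the quotient $\Heis \to \Heis/Z(\Heis) \cong \RR^2$. This $V$ is generated by a nowhere-zero left-invariant vector field and is therefore orientable. Take a finite cover $p:\hat M \to M$ of degree at most $2^3$ on which all three pullback bundles $p^*\Eu,p^*\Ec,p^*\Es$ become orientable, and an iterate $k$ so that $\hat f = f^k$ lifts and preserves all three orientations. The cover $\hat M = \Heis/\Gamma'$ is again a non-toral nilmanifold (finite-index subgroups of $\Gamma$ remain non-abelian nilpotent lattices). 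The orientable case treated in the previous subsection applies to $\hat f$ on $\hat M$; in particular $\hat f$ is dynamically coherent and by Theorem \ref{nildyn} the center bundle $p^*\Ec = \hat{E}^c$ is tangent to the circle fibers of $\hat M \to \TT^2$, which are themselves pullbacks of the fibers of $\pi$. Hence $p^*\Ec = p^*V$, and since pullback along a covering map is injective on continuous line subbundles of $TM$, $\Ec = V$ on $M$.

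For $\Eu$ and $\Es$: since $TM$ is orientable and $\Ec$ is orientable, $\Eu\oplus\Es$ is orientable and $w_1(\Eu) = w_1(\Es)$ in $H^1(M,\ZZ/2)$, so it suffices to orient $\Eu$. Suppose for contradiction $\Eu$ is not orientable, and let $p:\hat M \to M$ now denote the orientation double cover of $\Eu$, with deck transformation $\tau$. On $\hat M$, both $\hat{E}^u$ and $\hat{E}^s$ are orientable and (after passing to $f^k$) the lift $\hat f$ preserves all orientations. The orientable case yields, among other things, an $\hat f$-invariant branching foliation $\hat\FFcub$ tangent to $\hat{E}^{cu}$ (Theorem \ref{branfol}) and leaf conjugacy to the algebraic part $\hat\Phi$. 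The unstable direction $E^u_{\hat\Phi}$ is left-invariant, hence a trivial (orientable) line subbundle of $T\hat M$, descending to an orientable line subbundle $E^u_\Phi$ of $TM$. To obtain a contradiction, I plan to show that $\Eu_f \cong E^u_\Phi$ as line bundles on $M$; this forces $\Eu_f$ to be orientable, contrary to the construction of $\hat M$.

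The main obstacle is establishing this isomorphism. My plan is to use Proposition \ref{nilfolns} and Theorem \ref{nearfol}: there is a Reebless $C^{1,0+}$ foliation $\hat\cW$ almost aligned with $\hat\FFcub$, and by Proposition \ref{nilfolns} it is almost aligned with $\FF_\pi$ for some Lie group homomorphism $\pi:\Heis \to \RR$, which the calculations of Section 4 of \cite{HNil} (which we can invoke as in Lemmas \ref{cubdd}--\ref{csbdd}) identify with the algebraic $\FF^{cu}_{\hat\Phi}$. This uniform almost alignment, combined with the transversality to $\hat E^s$ given by the partially hyperbolic splitting, should imply that in the left-invariant trivialization of $T\hat M$ the section $\hat M \to \RR P^2$ defined by $\hat{E}^u_{\hat f}$ stays in a proper sub-cone around the constant section defined by $E^u_{\hat\Phi}$. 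Inside this cone bundle (whose fibers are contractible subsets of $\RR P^2$), one can linearly interpolate to produce a homotopy of continuous line subbundles from $\hat{E}^u_{\hat f}$ to $p^*E^u_\Phi$; homotopic line subbundles have equal $w_1$, so $\hat{E}^u_{\hat f} \cong p^*E^u_\Phi$ as line bundles, and descending to $M$ gives $\Eu_f \cong E^u_\Phi$. This contradicts the non-orientability assumption and completes the proof.
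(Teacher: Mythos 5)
The decisive step of your argument for $\Eu$ is not available: you need the (lifted) line field $\Eu_f$ to lie in a \emph{uniform pointwise} cone around the left-invariant direction $E^u_{\Phi}$, and you propose to extract this from almost alignment of the center-unstable branching foliation with some $\FF_\pi$ (Proposition \ref{nilfolns}, Theorem \ref{nearfol}, and the Section 4 estimates of \cite{HNil}). But almost alignment is a purely coarse statement: each lifted leaf stays within Hausdorff distance $R$ of a level set of $\pi$. This gives no control on tangent directions at individual points --- a leaf can remain in a slab of bounded width while its tangent planes, and the unstable line inside them, turn far away from $E^u_\Phi$ at small scales; likewise Lemmas \ref{nilbounds}--\ref{endsunbdd} only bound or force growth of $\pi^s,\pi^u$ \emph{along leaves} and say nothing about $\Eu(x)$ pointwise, and transversality to $\Es$ is an open, point-dependent condition, not a uniform cone about a fixed left-invariant direction. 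Hence the interpolation inside a cone bundle, and with it $w_1(\Eu)=w_1(E^u_\Phi)=0$, is not established (and even granting it on the double cover, descending the homotopy to $M$ would still require an equivariance check).

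The first stage is also wrong as stated: Theorem \ref{nildyn} says the bundle projection \emph{may be chosen} so that center leaves are fibers; it does not identify $\Ec$ with the left-invariant vertical field $V$. The center bundle is merely a continuous line field tangent to a foliation by circles, and tangency to circle fibers alone does not imply orientability (the circle fibration of the Klein bottle has non-orientable vertical bundle), so ``$p^*\Ec=p^*V$, hence $\Ec=V$'' is unjustified; since this step feeds both $w_1(\Eu)=w_1(\Es)$ and the orientability of the lifted bundles on your double cover, the second stage inherits the gap. The paper's proof uses exactly the coarse information that is available: after passing to the finite cover and iterate (as you set up), Lemma \ref{endsunbdd} holds on the universal cover, so along any oriented unstable leaf $\lim_{t\to+\infty}\pi^u\alpha(t)$ is $+\infty$ or $-\infty$; this sign is independent of the parameterization, locally constant in the base point, and invariant under every deck transformation in $\Gamma$, so it defines a $\Gamma$-invariant orientation of the lifted unstable bundle which descends to $\Heis/\Gamma$. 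The stable bundle is handled symmetrically, and $\Ec$ is then orientable because the full tangent bundle is. If you wish to keep your outline, replace the cone/homotopy step by an end-behaviour argument of this kind rather than seeking pointwise alignment.
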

\begin{proof}
    Lift the subbundles to the universal cover $\Heis$ and choose an
    orientation for each of them.         
    Regarding $\Gamma$ as the group of deck transformations,
    those $\gamma \in \Gamma$ which preserve all three of these orientations
    form a normal, finite-index subgroup $\Gamma_0$.
    Lift $f$ to $\tilde f:\Heis \to \Heis$.
    As $\tilde f^2$ preserves the orientations of the subbundles, it descends
    to a diffeomorphism $g$ on the nilmanifold $\Heis/\Gamma_0$.
    On this manifold, the subbundles are oriented and $g$ preserves these
    orientations.  Therefore, all of the previous analysis now applies.
    In particular, Lemma \ref{endsunbdd} holds for the unstable foliation
    on $\Heis$.

    Choose an orientation for $\bbR$.
    Suppose $x \in \Heis$ and that $\alpha:\bbR \to \tilde \cW^u(x)$ is an
    orientation-preserving
    parameterization of the unstable curve through $x$.
    Lemma \ref{endsunbdd} implies that
    $\lim_{t \to +\infty} \pi^u \alpha(t)$
    is either $+\infty$ or $-\infty$.
    Basic continuity arguments show that this limit does not depend on
    the choice of $\alpha$ or $x$.
    It is straightforward to show that
    \[
        \lim_{t \to +\infty} \pi^u \gamma \alpha(t) = \lim_{t \to +\infty} \pi^u \alpha(t)
    \]
    for any deck transformation $\gamma \in \Gamma$.
    From this, it follows that the orientation of the lifted unstable bundle is
    $\Gamma$-invariant and descends to an orientation of $\Eu$ on $\Heis/\Gamma$.
    A similar argument shows that $\Es$ is orientable.  As the full
    three-dimensional tangent bundle $\Eu \oplus \Ec \oplus \Es$ is orientable, the
    center bundle $\Ec$ is orientable as well.
\end{proof}
Though the subbundles are always orientable, there are examples of partially
hyperbolic systems on $\Heis/\Gamma$ which do not preserve these orientations.
(To construct such an example, take a hyperbolic toral automorphism on $\bbT^2$
which does not preserve the orientations of its stable and unstable
subbundles.  Then, build a bundle map over this.)
Fortunately, this obstruction does not pose a serious problem.

Suppose $f:\Heis/\Gamma \to \Heis/\Gamma$ is partially hyperbolic.
Then, $f^2$ preserves the orientations of the bundles $\Eu$, $\Ec$, and $\Es$,
and from the above proof of dynamical coherence, there is a unique
$f^2$-invariant foliation $\cW$ tangent to $\Ecs$.
Note that $f(\cW)$ is also an $f^2$-invariant foliation tangent to
$\Ecs$ and so $f(\cW)=\cW$.
Finding such an $f$-invariant foliation via Theorem \ref{branfol} was the only
reason for the added assumption on orientations, and therefore the results of
the last subsection now follow in the general (non-orientation-preserving)
case.

\subsection{Finite quotients} %{{{1

To prove Theorem \ref{alldyn} from Theorem \ref{nildyn} we must consider
finite quotients of nilmanifolds.

\begin{prop}
    If a compact manifold $M$ has a universal cover homeomorphic to $\bbR^3$
    and $\pi_1(M)$ is virtually nilpotent, then
    there is a regular finite cover $\hat M$ over $M$ such that
    $\hat M$ is diffeomorphic to a nilmanifold and every diffeomorphism
    $f:M \to M$ can be lifted to a diffeomorphism $\hat f:\hat M \to \hat M$.
\end{prop}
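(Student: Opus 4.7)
The plan is to find a characteristic, finite-index, nilpotent subgroup $N_0$ of $\Gamma := \pi_1(M)$. Since a characteristic subgroup is in particular normal, the associated cover $\hat M \to M$ will be regular (and finite), and every diffeomorphism $f : M \to M$ will lift to $\hat M$ because $f_*(N_0) = N_0$.

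To produce such an $N_0$, I would apply the standard intersection trick. Starting from any nilpotent subgroup $N \leq \Gamma$ of finite index $d$ (which exists by hypothesis), I note that $M$ compact implies $\Gamma$ is finitely generated, and a finitely generated group has only finitely many subgroups of each fixed index. Hence
\[
    N_0 := \bigcap \{\, H \leq \Gamma : [\Gamma : H] \leq d \,\}
\]
is a finite intersection of finite-index subgroups, and therefore has finite index in $\Gamma$. The collection being intersected is preserved by every automorphism of $\Gamma$, which makes $N_0$ characteristic, and the inclusion $N_0 \subseteq N$ makes $N_0$ nilpotent. Moreover, $M$ is aspherical (its universal cover $\mathbb{R}^3$ is contractible), so $\Gamma$ is torsion-free, and hence so is $N_0$.

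It remains to identify $\hat M$, the regular cover of $M$ corresponding to $N_0$, with a nilmanifold. By Malcev's theorem, the torsion-free finitely generated nilpotent group $N_0$ embeds as a cocompact lattice in a simply connected nilpotent Lie group $G$, so $G/N_0$ is a closed nilmanifold; a Hirsch-length computation forces $\dim G = 3$. Both $\hat M$ and $G/N_0$ are therefore closed aspherical 3-manifolds with isomorphic fundamental groups. I expect this last step to be the main obstacle: concluding that these two manifolds are diffeomorphic requires invoking the classification of closed 3-manifolds with $\mathrm{Nil}$ or Euclidean geometry (equivalently, a rigidity theorem for lattices in those Lie groups), rather than any purely covering-space or group-theoretic argument. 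Once this identification is established, a given diffeomorphism $f : M \to M$ induces an automorphism of $\Gamma$ (up to inner automorphism) that preserves $N_0$ by the characteristic property, and standard covering-space theory then supplies the desired lift $\hat f : \hat M \to \hat M$, which is a diffeomorphism because $f$ is.
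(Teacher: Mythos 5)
Your group-theoretic setup is sound and runs parallel to the paper's: the paper simply takes $N$ to be the Hirsch--Plotkin radical of $\pi_1(M)$ (the maximal normal nilpotent subgroup, automatically invariant under every automorphism and of finite index in a virtually nilpotent group), whereas you manufacture a characteristic finite-index nilpotent subgroup $N_0$ by intersecting all subgroups of index at most $d$; both choices yield a regular finite cover to which every diffeomorphism lifts, yours possibly a deeper cover than necessary. Torsion-freeness via asphericity is the same point the paper makes by quoting P.~A.~Smith, and the appeal to Malcev with a Hirsch-length count matches the paper. The real divergence is in the step you flagged as the obstacle, and there your proposed citation is not quite the right tool: a \emph{lattice rigidity} statement compares two coset spaces $G/\Gamma$, but here only $\hat K = G/N_0$ is known a priori to be a nilmanifold, while $\hat M$ is merely a closed aspherical $3$-manifold with the same fundamental group; similarly, the ``classification of closed $3$-manifolds with Nil or Euclidean geometry'' cannot be applied to $\hat M$ until you know $\hat M$ carries such a geometry, which itself requires geometrization or the Seifert fibered space theorem. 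The paper avoids this by using topological rigidity for sufficiently large (Haken) $3$-manifolds: one checks directly that $3$-dimensional nilmanifolds are sufficiently large, so Waldhausen's theorem upgrades the homotopy equivalence $\hat M \simeq \hat K$ to a homeomorphism, and Moise's theorem then gives a diffeomorphism. So your outline is completable, but the final step should either invoke geometrization-plus-geometric-rigidity explicitly or be replaced by the lighter Waldhausen--Moise argument the paper uses.
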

\begin{proof}
    As $\pi_1(M)$ is virtually nilpotent, the Hirsch-Plotkin radical $N$
    is the maximal normal nilpotent subgroup and is of finite index.
    It defines a regular finite covering $\hat M$.
    As any automorphism of $\pi_1(M)$ leaves $N$ invariant, any diffeomorphism
    of $M$ lifts to $\hat M$.
    By a classical result of P.~A.~Smith, any free action on $\bbR^n$ is
    torsion free (see \cite[page 43]{borel-transformation}), and then by a
    result of Malcev \cite{malcev}, $N$ can be identified with the fundamental
    group of a compact nilmanifold $\hat K$.
    As $\hat M$ and $\hat K$ are aspherical and have isomorphic fundamental
    groups, they are homotopy equivalent and $\hat K$ must be
    three-dimensional.
    One can verify
    directly that three-dimensional nilmanifolds are ``sufficiently large'' in
    the sense of Waldhausen, and therefore, $\hat M$ and $\hat K$ are in fact
    homeomorphic \cite{waldhausen}.
    By a result of Moise, they are diffeomorphic \cite{moise}.
\end{proof}

\begin{proof}[Proof of Theorem \ref{alldyn}]
If $f:M \to M$ is as in Theorem \ref{alldyn}, it lifts to
$\hat f:\hat M \to \hat M$ by the above proposition, and by Theorem \ref{nildyn}
there is a foliation
$\cW$ which for each $k \ge 1$ is the unique
$\hat f^k$-invariant foliation tangent to $\Ecs$.
For each deck transformation $\alpha$ of the finite covering of $\hat M$
over $M$, there is $k$ such that $\hat f^k \alpha = \alpha \hat f^k$, so that
$\alpha(\cW)$ is also $\hat f^k$-invariant, and by uniqueness
$\alpha(\cW) = \cW$.
As such, $\cW$ quotients to a foliation on $M$ and is the unique
$f$-invariant foliation tangent to $\Ecs$.
\end{proof}

%%%%%%%%%%%%%%%%%%%%%%%%%%%%%%%%%%%%%%%%%%%%%%%%%%

\section{Dynamical consequences}\label{Section-Consequences} %{{{1

\subsection{Entropy maximizing measures} 
In this subsection, we observe how our results allow one to re-obtain some
results about entropy maximizing measures previously proven under less
generality.
First, the main result of \cite{Ures} also holds in the pointwise case.

\begin{prop}\label{Prop-EMM} Let $f: \TT^3 \to \TT^3$ be a partially hyperbolic diffeomorphism such that its linear part $A_f$ is hyperbolic. Then, $f$ has a unique maximal entropy measure which is measurably equivalent to the volume measure for $A_f$ (i.e. it is \emph{intrinsically ergodic}). 
\end{prop}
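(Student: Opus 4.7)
The plan is to reduce the statement to the theorem of \cite{Ures}, proved there under absolute partial hyperbolicity, by observing that its proof rests only on the leaf conjugacy to $A_f$, which is now available in the pointwise setting via Corollary \ref{isoAnosov}.

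Since $A_f$ is hyperbolic, its eigenvalues are off the unit circle, so Corollary \ref{isoAnosov} applies: $f$ is dynamically coherent and leaf conjugate to $A_f$. Projecting along the center foliation $\Wc_f$, one obtains a continuous semi-conjugacy $H:\TT^3\to\TT^2$ from $f$ to the Anosov factor of $A_f$ (the quotient of $A_f$ by its center direction), with $H^{-1}(x)$ a single compact center leaf of $f$ for every $x\in\TT^2$. Pulling back the Haar measure on $\TT^2$ yields an $f$-invariant measure $\mu$; since the fibers are compact one-dimensional center leaves of controlled length (a direct consequence of leaf conjugacy and the compactness of $\TT^3$), an application of Bowen's theorem on entropy via semi-conjugacies gives $h_\mu(f)=h_{top}(A_f)=h_{top}(f)$, so $\mu$ is a measure of maximal entropy.

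For uniqueness, I would follow Ures's argument: any measure of maximal entropy $\nu$ projects under $H$ to a measure of maximal entropy on the hyperbolic factor, hence to Haar; then a disintegration along the fibers of $H$, together with the fact that these fibers carry no topological entropy for $f$, forces $\nu=\mu$. Measurable equivalence with the volume measure of $A_f$ is then automatic, since both systems factor through $H$ onto Haar on $\TT^2$ and the fiberwise disintegrations agree.

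The main obstacle, and essentially the only work to be done, is a careful audit of \cite{Ures} analogous to the one carried out in Section \ref{SectionTorusConj} for Theorem \ref{pwtd}: one must check that wherever the absolute constants $\lambda<\hat\gamma<\gamma<\mu$ enter the expansion/contraction estimates, they can be replaced by the pointwise bounds $\|Df^n v\|\geq \Cph^{-1}\mu^n\|v\|$ on $\Eu$ and $\|Df^n v\|\leq \Cph\lambda^n\|v\|$ on $\Es$ available for any pointwise partially hyperbolic system. Since Ures's estimates only use exponential-rate expansion together with the topological input from leaf conjugacy, this substitution is harmless and the proposition follows.
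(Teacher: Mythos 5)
Your high-level plan (reduce to \cite{Ures} via Corollary \ref{isoAnosov} and audit where absoluteness enters) is the right one, but the concrete mechanism you sketch contains a genuine error. When $A_f$ is hyperbolic, its center eigendirection is an irrational line in $\TT^3$: the center leaves of $A_f$, and hence (by leaf conjugacy) those of $f$, are non-compact lines, each dense in $\TT^3$, and the leaf space of $\Wc_f$ is not Hausdorff. So there is no continuous map $H:\TT^3\to\TT^2$ obtained by ``projecting along the center foliation,'' and the fibers are certainly not ``compact one-dimensional center leaves of controlled length.'' The picture you are using --- a fibration over a 2-torus with compact circle fibers, Bowen's theorem fiberwise, and a disintegration over Haar on $\TT^2$ --- belongs to the skew-product/compact-center setting of \cite{HHTU} (the nilmanifold proposition in the same section), not to the hyperbolic-linear-part case. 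The correct object in Ures's proof is the Franks--Walters semiconjugacy $h:\TT^3\to\TT^3$ with $h\circ f=A_f\circ h$, at bounded distance from the identity, whose fibers are compact \emph{arcs} inside center leaves; the measure of maximal entropy is related to Haar on $\TT^3$ (the unique MME of the Anosov automorphism $A_f$) through $h$, and uniqueness and the measurable conjugacy come from showing the set where $h$ fails to be injective is negligible for any MME. As written, your construction of $\mu$ and your uniqueness step both rest on the nonexistent $H$, so they do not go through.

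Your final ``audit'' paragraph is also not quite how the reduction works. One does not need to chase the absolute constants $\lambda<\hat\gamma<\gamma<\mu$ through Ures's expansion estimates and replace them by pointwise bounds: the only places where \cite{Ures} uses absolute partial hyperbolicity are his Lemmas 3.2 and 3.3 (the structural input: dynamical coherence and the comparison/leaf conjugacy with $A_f$), and these are supplied verbatim by Corollary \ref{isoAnosov} in the pointwise case. Once those two lemmas are granted, the rest of Ures's argument applies unchanged, which is exactly the (short) proof of the proposition.
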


\begin{proof}
The only dependence on absolute partial hyperbolicity in \cite{Ures} is in
establishing Lemmas 3.2 and 3.3.
Those lemmas can be proven directly from Corollary \ref{isoAnosov} of this
paper.
\end{proof}

In higher dimensions, under the assumption of being isotopic to Anosov along a path of partially hyperbolic systems it is possible to recover also the same result in the pointwise case (\cite{FPS}). 

\begin{prop}
Let $f$ be a $C^{1+\alpha}$ partially hyperbolic diffeomorphism of a
non-toral three-dimen\-sional nilmanifold.
Then, $f$ has at least one and at most a finite number of measures of maximal
entropy.
\end{prop}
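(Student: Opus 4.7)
The plan is to reduce the problem to the induced dynamics on the base $\bbT^2$ using our earlier classification. By Theorem \ref{nildyn}, $f$ is dynamically coherent and the center foliation coincides with the fibers of a circle bundle projection $\pi: M \to \bbT^2$; hence $f$ descends to a continuous map $\bar f: \bbT^2 \to \bbT^2$ with $\pi \circ f = \bar f \circ \pi$. By Theorem \ref{nilconj}, $f$ is leaf conjugate to its algebraic part $\Phi_f$, and since the leaf conjugacy sends center fibers to center fibers, it descends to a homeomorphism $\bar h$ of $\bbT^2$ conjugating $\bar f$ to the induced map $\bar \Phi_f$ of $\Phi_f$ on the base. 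The latter is a hyperbolic toral automorphism, so $\bar f$ is topologically conjugate to an Anosov diffeomorphism of $\bbT^2$ and in particular is expansive with a unique measure of maximal entropy $\bar \mu_0$ of entropy $\log|\lambda^u|$.

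For existence I would appeal to upper semi-continuity of the metric entropy. Since the center bundle is one-dimensional and the center leaves are compact, standard arguments (see \cite{HHTU}) show that $f$ is entropy-expansive; the entropy map is then upper semi-continuous on the weak-$*$ compact space of $f$-invariant probability measures, and at least one measure of maximal entropy exists.

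Finiteness I would obtain by combining the factor structure above with the Ledrappier--Walters formula
\[
h_\mu(f) = h_{\pi_*\mu}(\bar f) + h_\mu\bigl(f \,\bigm|\, \pi^{-1}\cB_{\bbT^2}\bigr).
\]
The conditional topological entropy of $f$ along the fibers of $\pi$ vanishes: each fiber is a circle of uniformly bounded length and $f$ permutes the fibers as diffeomorphisms with bounded bi-Lipschitz norm, so there is no exponential separation of orbits inside a single fiber. Consequently $h_{top}(f) = h_{top}(\bar f)$, and any ergodic maximal entropy measure $\mu$ of $f$ must satisfy $\pi_*\mu = \bar \mu_0$ and have zero fiberwise conditional entropy. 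At this point the main theorem of \cite{HHTU} applies: for a $C^{1+\alpha}$ partially hyperbolic diffeomorphism with compact one-dimensional center leaves forming a fiber bundle over an expansive topologically Anosov factor, there are only finitely many ergodic probability measures projecting to $\bar \mu_0$ with zero conditional entropy along the center.

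The main obstacle I foresee is verifying that \cite{HHTU} can be applied without modification to our setup. All the required ingredients are provided by the previous sections -- the center leaves are genuine circles (Theorem \ref{nildyn}), the base factor is topologically Anosov (Theorem \ref{nilconj}), and the center contribution to entropy vanishes by the bounded geometry of fibers -- so once this matching is carried out, both existence and finiteness follow from the combination of Ledrappier--Walters and \cite{HHTU}.
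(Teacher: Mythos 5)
There is a genuine gap, and it sits exactly where you invoke \cite{HHTU} for finiteness. The statement you attribute to that paper --- that any $C^{1+\alpha}$ partially hyperbolic diffeomorphism with compact one-dimensional center leaves fibering over an expansive, topologically Anosov factor has only finitely many ergodic measures projecting to $\bar\mu_0$ with zero center conditional entropy --- is not their theorem, and it is false as stated. The main result of \cite{HHTU} requires \emph{accessibility}: under that hypothesis they run an invariance-principle dichotomy (either the center exponents vanish and one gets a unique, rotation-type maximal measure, or they do not and one gets finitely many hyperbolic ones). Without accessibility the conclusion fails: the product of a linear Anosov automorphism of $\bbT^2$ with the identity on $S^1$ satisfies every hypothesis in your formulation (compact circle center leaves, zero fiber entropy, Anosov base), yet $\mu_0\times\delta_\theta$ is an ergodic measure of maximal entropy for every $\theta\in S^1$. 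So your reduction via the bundle projection and the Ledrappier--Walters formula, while correct as far as it goes (it reproduces part of the internal mechanism of \cite{HHTU}), still leaves all the real work --- ruling out a continuum of zero-center-entropy lifts of $\bar\mu_0$ --- to a citation whose key hypothesis you have neither stated nor verified.

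The missing ingredient is supplied in this paper: Proposition \ref{nilacc} shows that \emph{every} partially hyperbolic diffeomorphism of a non-toral $3$-nilmanifold is accessible, and the paper's proof of the proposition is precisely the two-line combination ``compact center leaves by Theorem \ref{nildyn}, accessibility by Proposition \ref{nilacc}, now apply the main theorem of \cite{HHTU}.'' Your existence argument via entropy expansiveness is fine (and is consistent with how \cite{HHTU} obtain existence), but for finiteness you should drop the reformulated black box, add the accessibility input from Proposition \ref{nilacc}, and quote the theorem of \cite{HHTU} with its actual hypotheses.
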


\begin{proof}
By Theorem \ref{nildyn}, $f$ has a center foliation of compact leaves,
and is also accessible (see Proposition \ref{nilacc} below).
The proposition then follows immediately from the main result of \cite{HHTU}.
See that paper for more details.
\end{proof}

Under additional assumptions, one can prove similar results for diffeomorphisms
on $\TT^3$ with non-hyperbolic linear part.  We again refer the interested
reader to \cite{HHTU}.

\subsection{Uniqueness of attractors on nilmanifolds} %{{{1

In this section we deduce some dynamical consequences from the
main results to the study of dynamics of partially hyperbolic
diffeomorphisms in 3 dimensional non-toral nilmanifolds.

We recall that a \emph{quasi-attractor} is a chain-recurrence
class\footnote{See \cite{BDV} Chapter 10.} such that it admits a
basis of neighborhoods $\{U_n\}$ such that $f(\overline U_n) \en
U_n$. It is well known (and simple to show) that a quasi-attractor
must be saturated by unstable sets. Thus, in the partially
hyperbolic setting saturated by the unstable foliation.

As a consequence of an argument very similar to the one in
\cite{BCLJ} (Proposition 4.2) we can deduce that for a partially
hyperbolic diffeomorphism of $\cH/\Gamma$ these foliation have a unique
minimal set, thus concluding:

\begin{prop} Let $f: \cH/\Gamma \to \cH/\Gamma$ be a strong partially hyperbolic
diffeomorphism, then, $f$ has a unique quasi-attractor. Moreover,
there exists a leaf of $\cW^u$ which is dense in $\cH/\Gamma$ and a unique minimal set of the foliation.
\end{prop}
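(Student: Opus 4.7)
The plan is to combine the structural results of this paper (Theorems \ref{nildyn} and \ref{nilconj}) with an adaptation of the argument of \cite{BCLJ} (Proposition 4.2). By Theorem \ref{nildyn}, the center foliation of $f$ consists of the compact fibers of a bundle projection $\pi:\cH/\Gamma \to \TT^2$, and by Theorem \ref{nilconj}, $f$ is leaf conjugate to its algebraic part $\Phi_f$, whose restriction to the base descends to a hyperbolic toral automorphism $\bar\Phi$ of $\TT^2$. In particular, $\pi$ semiconjugates $f$ to a homeomorphism $\bar f:\TT^2\to\TT^2$ isotopic to $\bar\Phi$.

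The first step is to show that $\pi(\cW^u(p))$ is dense in $\TT^2$ for every $p\in\cH/\Gamma$. From the analysis carried out on the way to Theorem \ref{nilconj} (in particular Lemma \ref{nilbounds} and the properties of the homomorphisms $\pi^s,\pi^u$), the lifted foliation $\tilde\cW^u$ on $\cH$ lies within bounded distance of the lifted unstable foliation of $\Phi_f$; projecting through the abelianization $\cH \to \cH/[\cH,\cH]\cong\RR^2$, the leaves of $\tilde\cW^u$ sit in a bounded neighbourhood of straight lines in the unstable direction of $\bar\Phi$, which has irrational slope. Since such lines project densely to $\TT^2$, so do the images of the $\cW^u$-leaves.

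The second step is the uniqueness of the $\cW^u$-minimal set. Any minimal set $\Lambda$ is closed and $\cW^u$-saturated, so by the previous paragraph $\pi(\Lambda)=\TT^2$. Following \cite{BCLJ} (Proposition 4.2), the crux is then to verify that two distinct such $\Lambda_1,\Lambda_2$ cannot coexist: the argument proceeds by studying the (non-empty, compact) intersections $\Lambda_i\cap\pi^{-1}(x)$ along the center circle fibers and using the $f$-equivariance of $\pi$ together with the hyperbolicity of $\bar f$ (so that arbitrarily large amounts of expansion in the unstable direction are available) to force $\Lambda_1$ and $\Lambda_2$ to meet, contradicting the disjointness of distinct minimal sets. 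Once the unique minimal set $\Lambda$ is in hand, the uniqueness of the quasi-attractor is immediate: every quasi-attractor $Q$ is closed and $\cW^u$-saturated, hence contains a $\cW^u$-minimal set, which must be $\Lambda$; distinct quasi-attractors being disjoint, there is only one. The existence of a dense $\cW^u$-leaf then follows by a short Baire-type argument, using the density of $\pi(\cW^u(p))$ in $\TT^2$ for every $p$ and the uniqueness of $\Lambda$.

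The main obstacle will be the second step, namely the adaptation of the \cite{BCLJ} argument to the Heisenberg bundle setting: in the toral case one exploits an ambient product structure, whereas here the role of ``transverse direction'' is played by the compact center circles of the bundle $\pi:\cH/\Gamma\to\TT^2$, and one has to use the non-triviality of the bundle and the holonomy of $\cW^u$ along these fibers in place of the product structure. This is the most delicate point and where the specific geometry of $\cH/\Gamma$ enters essentially.
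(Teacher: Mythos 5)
Your proposal assembles the right ingredients (the center circle fibration from Theorem \ref{nildyn}, the leaf conjugacy from Theorem \ref{nilconj}, and the BCLJ-style uniqueness argument), but the two places you flag as delicate are exactly where the content of the proof lies, and your sketches of them do not work as stated. In step 1 the inference ``the projected leaf lies in a bounded neighbourhood of a line of irrational slope, such lines are dense in $\TT^2$, hence the projected leaf is dense'' is false: a set contained in the $R$-neighbourhood of a dense line is only $R$-dense, not dense. Density of unstable leaves is genuinely dynamical; in the paper it is obtained only after reducing to a return map and proving that map transitive, and the mechanism is the non-triviality of the bundle: iterating a suitable loop under a map homotopic to a Dehn twist forces unbounded spreading in the fiber direction. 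Nothing in your step 1 uses the non-trivial Euler class, and it must be used, since for the trivial bundle the analogous statement fails (skew products over an irrational translation that are coboundaries have invariant tori and no dense unstable leaf).

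Second, the reduction you leave open (``adapting BCLJ to the Heisenberg bundle setting'') is solved in the paper by a specific device you do not supply: using the leaf conjugacy $h$ to $\Phi_f$ to produce a topological $2$-torus $h^{-1}(T)$ made of center leaves and transverse to both strong foliations of $f$, and taking the return map of the \emph{unstable holonomy} on it. After conjugacy this return map has the form $F(x,y)=(x+\alpha,\varphi_x(y))$ with each $\varphi_x$ order preserving and $\varphi_{x+1}(y)=\varphi_x(y)+k$, i.e.\ a fibered homeomorphism over an irrational rotation homotopic to a Dehn twist. Transitivity of $F$ is proved by the loop/length-growth argument, and uniqueness of its minimal set by the BCLJ argument: the two invariant families of ordered arcs between two putative disjoint minimal sets $K,K'$ are disjoint, meet every fiber, and have nonempty interior by semicontinuity of $x\mapsto K\cap(\{x\}\times S^1)$, contradicting transitivity. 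Your step 2 replaces this by ``hyperbolicity of the base map provides arbitrarily large unstable expansion forcing $\Lambda_1$ and $\Lambda_2$ to meet,'' which is a different, unsubstantiated mechanism: expansion in the base does not preclude two disjoint $\cW^u$-saturated sets projecting onto all of $\TT^2$ but separated in the circle-fiber direction, which is precisely the configuration BCLJ rules out via order preservation and transitivity of the fibered return map. Likewise, your closing Baire-type argument for the dense leaf does not follow from ``dense projections plus a unique minimal set''; in the paper the dense leaf is a direct consequence of the transitivity claim. So the plan is aligned with the paper in spirit, but the key reduction and both core claims are missing or incorrectly sketched.
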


One could wonder if such a result does not imply that every strong partially hyperbolic diffeomorphism on $\Heis/\Gamma$ is transitive, however, this result is sharp: Y. Shi has constructed examples of partially hyperbolic diffeomorphisms in nilmanifold which are Axiom A (in particular they have an attractor and a repeller). The construction is in the spirit of \cite{BG}.

\proof The key point is that this problem can be reduced to a
problem about dynamics of homeomorphisms of $\TT^2$ which are
skew-products over the irrational rotation. In fact, we can
consider a two-dimensional torus $T$ consisting of center leaves
and transverse to both the strong stable and strong unstable
foliations of $\Phi_f$, the algebraic part of $f$.

We consider the leaf conjugacy $h: \cH/\Gamma \to \cH/\Gamma$ from $f$ to $\Phi_f$. 

We obtain that $h^{-1}(T)$ is a topological torus, foliated by
center leaves and such that it is transverse to both strong
foliations for $f$. Now, the return map of the unstable holonomy
defines a homeomorphism of $T$ which after conjugacy can be
written in the following form:

$$ F(x,y) = (x + \alpha, \varphi_x(y)) \ \mod \ZZ^2  $$

\noindent and satisfying that $\varphi_x(y+1)=\varphi_x(y) + 1$
and that $\varphi_{x+1}(y)= \varphi_{x}(y)+k$ (this is to say that
$F$ is homotopic to a Dehn twist).

We will divide the proof in two claims about such homeomorphisms.
Notice that the existence of a unique minimal set for the unstable
foliation implies immediately that $f$ has a unique
quasi-attractor since quasi-attractors are compact disjoint and
saturated by the unstable foliation.

The following claim\footnote{We thank Tobias J\"ager who
communicated to us this argument.} implies that $f$ has a dense
unstable leaf:

\begin{af} $F$ is transitive.
\end{af}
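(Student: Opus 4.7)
\emph{Proof plan.} The plan is to prove transitivity by exploiting the Dehn-twist structure in the universal cover. Lift $F$ to $\tilde F:\RR^2\to\RR^2$ and write $\tilde F^n(x,y)=(x+n\alpha,\tilde\Phi_n(x,y))$. A short induction on $n$ using the two relations $\tilde\varphi_x(y+1)=\tilde\varphi_x(y)+1$ and $\tilde\varphi_{x+1}(y)=\tilde\varphi_x(y)+k$ yields the monodromy identity
\[
    \tilde\Phi_n(x+1,y)=\tilde\Phi_n(x,y)+nk.
\]
In particular, for any fixed $y_0$ the graph $x\mapsto\tilde\Phi_n(x,y_0)$ has $y$-oscillation at least $|nk|$ on every $x$-interval of length one, so iterates of horizontal arcs ``tilt'' at a rate growing linearly in $n$.

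As a warm-up consequence, I would rule out any $F$-invariant circle graph $\{(x,\psi(x))\}$: writing the invariance relation $\tilde\varphi_x(\tilde\psi(x))=\tilde\psi(x+\alpha)$ at $x$ and at $x+1$, and subtracting using $\tilde\varphi_{x+1}=\tilde\varphi_x+k$ together with the fact that $\tilde\varphi_x$ commutes with integer translations, the two expressions for $\tilde\psi(x+\alpha)+m$ differ by $k$, forcing $k=0$, a contradiction to the Dehn-twist hypothesis. This exactly removes the obvious obstruction to transitivity.

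For the main argument, it suffices to show that every closed $F$-invariant set $K\subset\TT^2$ with nonempty interior equals $\TT^2$. Pick a short horizontal arc $\gamma\subset\mathrm{int}(K)$ of positive $x$-length. By irrationality of $\alpha$, finitely many iterates $F^{m_1}(\gamma),\dots,F^{m_J}(\gamma)$ have $x$-projections that together cover $\TT$; let $\Gamma$ be their union, so $\Gamma\subset K$ and $\pi_1(\Gamma)=\TT$. Applying the monodromy identity to lifts of the pieces of $\Gamma$, the iterate $F^n(\Gamma)\subset K$ has $y$-oscillation at least $|nk|$ over each $x$-period, so for $n$ large it meets every horizontal slab of $\TT^2$; combined with a second use of $\alpha$-density in the $x$-direction, this shows $F^n(\Gamma)$ meets every open ball of $\TT^2$. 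Hence $K$ is dense, and being closed, $K=\TT^2$.

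The main obstacle is that the $|nk|$-oscillation of $F^n(\Gamma)$ is only guaranteed over a full $x$-period and could be concentrated on a small sub-interval, so a single iterate need not visit every open ball. I expect to handle this by using many overlapping $\alpha$-translates of $\Gamma$ (via additional iterates $F^{m_j'}$) so that a pigeonhole argument spreads the oscillation uniformly in $x$; alternatively, by arguing by contradiction with the boundary $\partial K$, which is closed, $F$-invariant with empty interior, and whose components must, by the analysis in the preliminary step, be graph-like and hence yield $k=0$.
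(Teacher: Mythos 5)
Your monodromy identity $\tilde\Phi_n(x+1,y)=\tilde\Phi_n(x,y)+nk$ is just the quantitative form of the mechanism the paper uses (there, a loop built from finitely many iterates of a horizontal arc joined by short vertical segments is shown to wrap roughly $nk$ times around the fibre direction), and your reduction of transitivity to ``every closed invariant set with nonempty interior is all of $\TT^2$'' is fine. But the proof does not close, and the gap is exactly the one you flag at the end. From the identity you only get that the total vertical climb of $F^n(\Gamma)$ over one $x$-period is of order $n|k|$ (up to a constant coming from comparing the pieces at their overlaps). Since $\Gamma$ is a disconnected union of graphs, this does not even imply that $F^n(\Gamma)$ meets every horizontal slab: the $y$-ranges of the pieces may leave gaps, which is precisely why the paper inserts the vertical joining arcs to get a connected loop. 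More seriously, it says nothing about where in the $x$-direction the oscillation occurs, which is what hitting a prescribed ball requires. Neither of your proposed repairs supplies this. The oscillation of $F^n(\Gamma)$ genuinely can concentrate on a small, $n$-dependent $x$-interval, so no pigeonhole by itself ``spreads it uniformly in $x$''; and the components of $\partial K$, a closed invariant set with empty interior, need not be circle graphs, so your warm-up computation (which only excludes invariant graphs) does not apply to them --- excluding such general invariant objects is essentially the statement being proved, so that route is close to circular.

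The missing step can be closed, and this is what the paper's terse appeal to minimality of the base rotation stands for. First, the loop/monodromy argument gives, for every $L$, an iterate $F^{n}(J_1)$ of the original horizontal arc whose graph has vertical oscillation at least $L$ over some subinterval $I''$ of its $x$-projection; cutting $I''$ into at most $P=\lceil 2/|I_2|\rceil$ pieces, one may assume $|I''|\le |I_2|/2$ at the cost of replacing $L$ by $L/P$. Second, one application of $F$ decreases the oscillation of a graph over an interval of length at most $1$ by no more than a uniform constant $C_1$ (monotonicity of $\tilde\varphi_x$ in $y$, the relation $\tilde\varphi_x(y+1)=\tilde\varphi_x(y)+1$, and compactness). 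Third, by uniform recurrence of the irrational rotation there is $N=N(|I_2|)$ such that $I''+m\alpha\subset I_2$ for some $m\le N$. Choosing $L\ge P(1+NC_1)$, the curve $F^{n+m}(J_1)$ is a graph over an interval contained in $I_2$ with oscillation at least $1$, hence by the intermediate value theorem it meets $I_2\times\{y_2\}\subset U_2$. With this relocation argument your outline (like the paper's) becomes a complete proof; without it, your proposal stops exactly where the paper's one-line use of minimality begins.
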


{ % Start of non-standard QED
\renewcommand{\qedsymbol}{$\diamondsuit$}
\begin{proof}
Consider two open sets $U_1, U_2$ in $\TT^2$. We will show
that there is an iterate of $U$ that intersects $V$. To do this,
we consider arcs $J_i \en U_i$ of the form $J_i= (x_i^-,x_i^+)
\times \{y_i\}$.

We will work in the universal cover $\RR^2$ of $\TT^2$. We claim
that that if we denote as $\tilde F$ to the lift of $F$ and we
consider the iterates $\tilde J_1^k=\tilde F^k(\tilde J_k)$ and we
define $\ell_k$ to be the length of $\tilde J_1^k$ when projected
in the second coordinate, then $\ell_k$ goes to infinity with $k$.
Now, since the dynamics on the base is minimal this implies that
there is an iterate of $F$ such that $F(J_1)\cap J_2 \neq
\emptyset$ which will conclude the proof.

To prove the claim, consider a sequence of iterates $J_1,
f^{i_1}(J_1), \ldots f^{i_m}$ such that the projection of their
union in the first coordinate is surjective. It is clear that this
can be done since the dynamics in the first coordinate is minimal.
Now, join these arcs by arcs contained in the second coordinate in
order to make a non-trivial loop $\gamma$ homotopic to the first
coordinate circle. The iterates of $\gamma$ by $F$ start to turn
around the second coordinate infinitely many times and thus the
length of $F^n(\gamma)$ goes to infinity. since the vertical arcs
remain of bounded length, we conclude the claim.
\end{proof}
} % End of non-standard QED

The following claim concludes the proof of the Proposition.

\begin{af} $F$ has a unique minimal set.
\end{af}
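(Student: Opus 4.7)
I would proceed by contradiction, supposing $F$ admits two disjoint minimal sets $M_1, M_2 \subset \TT^2$. Since the base rotation $R_\alpha$ is minimal and $\pi(M_i)$ is a closed $R_\alpha$-invariant subset of $\TT^1$, both $M_1$ and $M_2$ project onto the full base circle. Consequently the open $F$-invariant set $U = \TT^2 \setminus (M_1 \cup M_2)$ meets every vertical fiber $\{x\} \times \TT^1$ (in the gaps between $M_1(x)$ and $M_2(x)$), but no fiber is contained in $U$, since both $M_i(x)$ are nonempty.

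The plan is to adapt the length-growth argument from the previous claim, applied now to a small horizontal arc $J \subset U$ chosen inside a gap. Since $U$ is $F$-invariant, all iterates $F^n(J)$ remain in $U$. Mimicking the construction in the transitivity claim, I would concatenate finitely many iterates $F^{i_j}(J)$ with short vertical connectors to form an essential loop $\gamma$ projecting surjectively onto the base circle; the Dehn twist condition then forces $F^n(\gamma)$ to wind around the vertical circle on the order of $kn$ times, so the total vertical length of $F^n(\gamma)$ tends to infinity. Because the vertical connectors have bounded length, this growth must be carried by the horizontal iterates of $J$, yielding iterates of $J$ inside $U$ of arbitrarily large vertical extent. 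Once such an iterate has vertical extent exceeding $1$, it must cover a full vertical circle inside $U$, producing a fiber $\{x\} \times \TT^1 \subset U$ and contradicting $M_i(x) \neq \emptyset$.

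The main obstacle is placing the connecting vertical arcs inside $U$: the fiberwise gaps $\TT^1 \setminus (M_1(x) \cup M_2(x))$ depend only upper semi-continuously on $x$, so the naive connectors may cross $M_1 \cup M_2$. To handle this I would refine $J$ so that the finite collection of iterates $F^{i_j}(J)$ needed to cover the base all lie inside the same gap component (in a chosen local trivialization at each relevant horizontal position), using a pigeonhole/Baire-type argument on the finitely many gap labels visited, and invoking upper semi-continuity to keep these gaps open over a neighborhood of each $R_\alpha^{i_j}(x_0)$. Once $\gamma$ is built inside $U$, the Dehn-twist length estimate of the previous claim transfers verbatim and closes the argument.
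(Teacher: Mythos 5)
Your final step does not work, and it is the crux of the argument. The map $F$ preserves the vertical fibration and acts on the base as the rotation $x \mapsto x+\alpha$, so the image $F^n(J)$ of a horizontal arc $J$ is a \emph{graph} over an arc of the base circle: it meets each vertical fiber $\{x\}\times S^1$ in at most one point. Hence ``vertical extent exceeding $1$'' only means that the projection of $F^n(J)$ to the second coordinate is onto; it does not produce a vertical circle contained in $F^n(J)$, and no iterate of $J$ can ever contain a fiber. So the claimed contradiction with $M_i(x)\neq\emptyset$ never materializes. The deeper problem is strategic: the single open invariant set $U=\TT^2\setminus(M_1\cup M_2)$ cannot by itself contradict transitivity --- transitivity only forces a nonempty open invariant set to be dense, which is perfectly compatible with $U$ meeting every fiber while containing none (and with curves in $U$ having arbitrarily large vertical extent, since the fiberwise position of $M_1,M_2$ is only semicontinuous and can move fast in $x$). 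Incidentally, the ``main obstacle'' you focus on (keeping the vertical connectors inside $U$) is not where the difficulty lies: the iterates of $J$ stay in $U$ by invariance and the connectors only need bounded length for the growth estimate, yet even granting all of that the conclusion does not follow.

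What is missing is any use of the order structure of the fiber maps, which is exactly what the paper exploits. There one takes the two disjoint minimal sets $K,K'$ and forms \emph{two} sets: $U_1$, the union of the open fiber arcs $\{x\}\times(y_1,y_2)$ with $y_1\in K$, $y_2\in K'$, and $U_2$, defined with the roles reversed. Because each $\varphi_x$ is an orientation (order) preserving circle homeomorphism, $U_1$ and $U_2$ are each $F$-invariant, they are disjoint, and they meet every fiber since the base rotation is minimal. Semicontinuity of $x\mapsto K\cap(\{x\}\times S^1)$ and $x\mapsto K'\cap(\{x\}\times S^1)$ gives a common continuity point, at which both $U_1$ and $U_2$ have nonempty interior; two disjoint invariant sets with nonempty interior contradict the transitivity established in the first claim. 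If you want to keep your length-growth machinery, you would still need some such splitting of the complement into two invariant pieces (or another use of order preservation); as written, the proposal has a genuine gap.
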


{ % Start of non-standard QED
\renewcommand{\qedsymbol}{$\diamondsuit\,\Box$}
\begin{proof}
We follow the argument in Proposition 4.2 of \cite{BCLJ}
which with the use of the previous proposition allows to conclude.

Assume there are two disjoint minimal sets $K, K'$ in $\TT^2$. We
know that $K \cap K' = \emptyset$ and since they are compact, they
are at bounded distance from below. Consider the set $U_1$ as the
set of (ordered) arcs $\{x\} \times (y_1,y_2)$ with $y_1 \in K$
and $y_2 \in K'$. Similarly one can consider the set of arcs $U_2$
of the form $\{x\} \times (y_1,y_2)$ with $y_1 \in K' $ and $y_2
\in K$. Clearly, $U_1 \cap U_2 = \emptyset$, and since the maps
$\varphi_x$ are order preserving they are invariant. Both sets
intersect every fiber of the type $\{x\} \times S^1$ since the
base dynamics is minimal.

We will prove that both have non-empty interior which will
contradict transitivity. To do this, notice that compactness of
$K$ and $K'$ implies that if we consider the mappings $x \mapsto
(K \cap \{x\} \times S^1)$ and $x \mapsto (K' \cap \{x\} \times
S^1)$ which are both semi-continuous and thus share a residual set
of continuity points. Take $x$ a common continuity point, then any
point of the form $(x,y)$ in $U_i$ is an interior point of $U_i$
concluding the proof.
\end{proof}
} % End of non-standard QED

\subsection{Accessibility} % {{{1

A partially hyperbolic diffeomorphism $f:M \to M$ is \emph{accessible} if
any two points $x,y \in M$ can be
connected by a concatenation of paths, each path tangent either to $\Eu$ or
$\Es$.
As with dynamical coherence, we can now show there are manifolds where every
pointwise partially hyperbolic system is accessible.

\begin{prop} \label{nilacc}
    Suppose $M$ is a non-toral 3-dimensional nilmanifold.
    Every $C^1$ partially hyperbolic diffeomorphism on $M$ (measure-preserving
    or not) is accessible.
\end{prop}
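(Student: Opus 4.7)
The plan is to suppose $f$ is not accessible and derive a contradiction from the structure established in Theorem \ref{nildyn} and Proposition \ref{dynamicalCons1}. By the standard dichotomy for one-dimensional-center partially hyperbolic systems (Didier, and the Rodriguez Hertz--Rodriguez Hertz--Ures framework), the set of points whose accessibility class is not open is a nonempty, closed, $f$-invariant topological lamination $\Lambda$, each leaf of which is a complete $C^1$ surface tangent to $\Es\oplus\Eu$ and saturated by both $\Wu$ and $\Ws$.

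By Theorem \ref{nildyn}, the center foliation $\Wc$ is the fiber foliation of a non-trivial circle bundle $\pi:M\to\TT^2$, so every leaf of $\Lambda$ is transverse to the fibers of $\pi$. Since $f$ permutes center leaves, it descends to a diffeomorphism $\bar f:\TT^2\to\TT^2$, and applying $\pi_{*}$ to the partially hyperbolic splitting shows that $\bar f$ is Anosov with stable and unstable foliations $\pi_{*}\Ws$, $\pi_{*}\Wu$. Accessibility is automatic for two-dimensional Anosov diffeomorphisms (the stable and unstable directions span $T\TT^2$), so $\pi$ sends every accessibility class of $f$ onto $\TT^2$; in particular $\pi(L)=\TT^2$ for each leaf $L$ of $\Lambda$, and $\pi|_L$ is a surjective local diffeomorphism from a complete $C^1$ surface onto the torus.

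The contradiction is then topological, using the non-vanishing Euler class $e$ of the bundle $\pi$. If some leaf $L$ of $\Lambda$ is compact, $\pi|_L:L\to\TT^2$ is a finite covering of degree $d\ge 1$, $L$ is a $2$-torus, and the pullback of $\pi$ along $\pi|_L$ would admit the tautological section $L\hookrightarrow M$; but this pullback has Euler class $d\cdot e\neq 0$, contradicting the existence of a section. When no leaf of $\Lambda$ is compact, I would combine Proposition \ref{dynamicalCons1} (unique minimal set of $\Wu$, with a dense $\Wu$-leaf) and the $\Wu$-saturation of the leaves of $\Lambda$ to show that some leaf $L$ is dense in $M$; lifting a generator of $\pi_1(\TT^2)$ to a path in $L$ should then detect the non-zero Euler class as a non-trivial center-holonomy, which is incompatible with the path staying inside a single leaf of the $su$-lamination.

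The main expected obstacle is this last step: extracting the Euler class obstruction from a lamination that is only $C^{1,0+}$ regular and ruling out pathological non-compact leaves of $\Lambda$. A cleaner alternative I would try is to bypass the non-compact case entirely by pushing the analysis to the branching $su$-foliation (analogous to $\FFcub$, $\FFcsb$ in Section \ref{SectionNilm}) and invoking the classification of Reebless foliations on $\Heis/\Gamma$ given in Proposition \ref{nilfolns}: a foliation of the form $\FF_\pi$ tangent to $\Es\oplus\Eu$ is ruled out by the same Euler-class argument applied directly to the leaf spaces, which would give the contradiction without having to pass to a dense leaf.
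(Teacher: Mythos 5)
Your compact-leaf case is fine (indeed, a closed leaf tangent to $\Es\oplus\Eu$ transverse to the fibers would be a covering of $\TT^2$ and the pulled-back bundle would have non-zero Euler class yet admit a section), but that is not where the content of the proposition lies: nothing forces the non-accessibility lamination $\Lambda$ to have a compact leaf, and your treatment of the non-compact case is not a proof. ``Lifting a generator of $\pi_1(\TT^2)$ to a path in $L$ should then detect the non-zero Euler class as a non-trivial center-holonomy'' is precisely the step that needs an argument, and density of a leaf (via Proposition \ref{dynamicalCons1}) does not by itself produce one; you acknowledge this yourself. Your fallback route has a more basic flaw: there is no ``branching $su$-foliation'' to pass to --- Burago--Ivanov's Theorem \ref{branfol} only yields branching foliations tangent to $\Ecs$ and $\Ecu$, and $\Es\oplus\Eu$ is exactly the bundle whose non-integrability is at stake here. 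Proposition \ref{nilfolns} applies to Reebless foliations of $\Heis/\Gamma$, not to laminations, and even granting an extension of $\Lambda$ to such a foliation, its conclusion (almost aligned with some $\FF_\pi$) is not ruled out by any Euler-class argument: the model foliations $\FF_\pi$ \emph{contain} the fiber direction rather than being transverse to it, so the contradiction would instead have to come from the quantitative divergence estimates along stable/unstable curves (in the spirit of Lemma \ref{endsunbdd}), which you do not invoke.

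For comparison, the paper's proof avoids any compact/non-compact dichotomy and any Euler-class computation. It lifts $\Lambda$ to the universal cover $\Heis$, uses Global Product Structure (extended to the pointwise case in Section \ref{SectionNilm}) to see that a lifted leaf $S$ meets each center leaf exactly once, notes that deck translates $\gamma(S)$ are pairwise disjoint or equal, and uses the $\Gamma$-invariant orientation of $\Ec$ (Proposition \ref{niloriented}) to order these translates. Proposition \ref{nilorder} forces this order to be trivial, i.e.\ $\gamma(S)=S$ for all $\gamma$; but central deck transformations fix every center leaf while moving points along it, so they would create a second intersection of $S$ with some center leaf, a contradiction. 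If you want to salvage your approach, the missing ingredient in the non-compact case is essentially this global-product-structure/ordering argument (or, equivalently, a Milnor--Wood type obstruction applied to a genuine foliation transverse to the fibers, which would first require you to prove that $\Lambda$ extends to one).
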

This was previously proven in the measure-preserving case in
\cite{HHUPHdim3}.

\begin{proof}
    If $f$ is not accessible, there is a non-empty lamination $\Lambda \subset
    M$ whose leaves are complete and tangent to $\Eu \oplus \Es$
    \cite[\S3]{RHRHU-accessibility}.
    Lift $\Lambda$ to the universal cover $\Heis$ and choose a leaf $S$ of the
    lifted lamination.
    By Global Product Structure, as proved in \cite{HNil} and now extended
    to the pointwise case, such a surface $S$ intersects each center leaf
    exactly once.
    For any deck transformation $\gamma \in \Gamma$, the surface $\gamma(S)$
    also intersects each center leaf exactly once.
    Further $\gamma(S)$ and $S$ are either disjoint or coincide.
    By Proposition \ref{niloriented}, $\Ec$ is orientable on $\Heis/\Gamma$,
    and so there is a $\Gamma$-invariant orientation of $\Ec$ on $\Heis$.
    This orientation allows us to define an ordering on the surfaces
    $\gamma(S)$ for $\gamma \in \Gamma$, saying whether one such surface is
    ``above'' or ``below'' another.
    From Proposition \ref{nilorder} in Appendix \ref{sectionLattices}, this
    ordering must be trivial.
    However, as shown in \cite{HNil} there are elements $\gamma \in \Gamma$
    which fix each center leaf, but do not fix any point of $\Heis$.
    This gives a contradiction.
\end{proof}

\appendix
\section{Classification of Reebless foliations in nilmanifolds}\label{sectionLattices} % {{{1

In this section, a \emph{leaf system} is a tuple $(M, \hat M, \FF, S)$
where
\begin{itemize}
    \item $M$ is a compact manifold without boundary,
    \item $\hat M$ is a normal (or regular) covering space of $M$,
    \item $\FF$ is a transversely oriented foliation on $M$,
    \item $S$ is a leaf of $\hat \FF$, the foliation obtained by lifting $\FF$ to
    $\hat M$, and
    \item there is no closed loop topologically transverse to $\hat \FF$ passing
    through $S$.
\end{itemize}
While the last condition implies that $S$ is properly embedded in $\hat M$,
other leaves of $\hat \FF$ may not be.  Also note that $\hat M$ is not
necessarily the universal cover of $M$.
Assume such a leaf system is fixed, and let $L$ denote the group of deck
transformations on $\hat M$.  If $\gamma \in L$, let $x \gamma$ denote the
action of $\gamma$ on $x \in \hat M$.
A right action was chosen to keep the notation as close as possible to that
used in \cite{BBI2}.
$S$ splits $\hat M$ into two open subspaces,
$S_+$ and $S_-$ where the sign is given by the transverse orientation on $\FF$.
Define
\begin{align*}
    \Gamma_+ &= \{ \gamma \in L : S_+ \gamma \subset S_+\}, \\
    \Gamma_- &= \{ \gamma \in L : S_+ \gamma \supset S_+\}, \\
    \Gamma &= \Gamma_+ \cup \Gamma_-.
\end{align*}
\begin{lemma}
    [confer Lemma 3.9 of \cite{BBI2}]
    If $A$ is an abelian subgroup of $L$, then $A \cap \Gamma$ is a subgroup of
    $A$.
\end{lemma}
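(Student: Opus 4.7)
My plan is to verify directly that $A \cap \Gamma$ contains the identity and is closed under inversion and multiplication. The easy parts come first: since $S_+ \cdot \id = S_+$, we have $\id \in \Gamma_+ \cap \Gamma_- \subseteq \Gamma$; and if $\gamma \in \Gamma_+$, applying the bijection $\gamma^{-1}$ to $S_+\gamma \subseteq S_+$ yields $S_+ \subseteq S_+\gamma^{-1}$, so $\gamma^{-1} \in \Gamma_-$ and $\Gamma$ is closed under inversion. Both $\Gamma_+$ and $\Gamma_-$ are sub-monoids of $L$: for $\gamma_1, \gamma_2 \in \Gamma_+$, one has $S_+ \gamma_1 \gamma_2 = (S_+ \gamma_1) \gamma_2 \subseteq S_+ \gamma_2 \subseteq S_+$, and similarly for $\Gamma_-$. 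The only case in the subgroup verification that requires genuine argument is the product of $\alpha \in A \cap \Gamma_+$ with $\beta \in A \cap \Gamma_-$, where abelianness supplies $\alpha\beta = \beta\alpha$.

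To handle this case, I would analyze the leaf $S\alpha\beta$ of $\hat \FF$ directly. Two distinct leaves of a foliation are disjoint as subsets of the ambient manifold, so either $S\alpha\beta = S$ or $S\alpha\beta \cap S = \emptyset$. In the first case $\alpha\beta$ stabilizes $S$; since each deck transformation preserves the transverse orientation of $\hat \FF$ (inherited from the transversely oriented $\FF$ on $M$), $\alpha\beta$ preserves the side $S_+$ as well, and $\alpha\beta \in \Gamma_+ \cap \Gamma_-$. In the second case, $S\alpha\beta$ is connected and disjoint from $S$, so it lies entirely in one of the two components of $\hat M \setminus S$, namely $S_+$ or $S_-$. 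Preservation of the transverse orientation identifies $S_+\alpha\beta$ as the positive side of $S\alpha\beta$, and a direct comparison of the two components of $\hat M \setminus S\alpha\beta$ with the decomposition $\hat M = S \cup S_+ \cup S_-$ yields $S_+\alpha\beta \subseteq S_+$ when $S\alpha\beta \subset S_+$, and $S_+\alpha\beta \supseteq S_+$ when $S\alpha\beta \subset S_-$. Either way $\alpha\beta \in \Gamma$, so $A \cap \Gamma$ is closed under multiplication.

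The main technical obstacle lies in justifying the last step: one must know not only that $S$ is properly embedded in $\hat M$ (which the leaf-system axiom on transverse loops through $S$ provides) but also that $S\alpha\beta$ inherits enough regularity to separate $\hat M$ into exactly two components whose positive/negative labels are consistent with those of $S_+$ and $S_-$. The natural route is to exploit that $\alpha\beta$ is a homeomorphism of $\hat M$ sending $S$ to $S\alpha\beta$ and intertwining transverse orientations, so that separating behaviour and the side labels are transported along. The abelian hypothesis on $A$, while not strictly needed in the outline above, is what justifies treating $\{S\alpha^n\beta^m : (n,m) \in \ZZ^2\}$ as a well-defined $\ZZ^2$-indexed family of leaves in the style of Lemma 3.9 of \cite{BBI2}, where the relative positions of these leaves with respect to $S$ encode membership in $\Gamma$; I would expect the authors' proof to proceed along these quantitative lines rather than the softer dichotomy sketched here.
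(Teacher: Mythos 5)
Your reduction to the mixed case and the routine verifications (identity, inverses, closure of $\Gamma_+$ and of $\Gamma_-$ under products) are fine, but the decisive step is unjustified, and it is exactly the step that carries the content of the lemma. When $S\alpha\beta$ is disjoint from $S$ and lies in $S_+$, preservation of the transverse orientation tells you only that $S_+\alpha\beta$ is the positive side of the leaf $S\alpha\beta$; it does not tell you that this positive side is the component of $\hat M\setminus S\alpha\beta$ which does not contain $S$. A priori there are two possibilities: either $S\subset S_-\alpha\beta$, in which case indeed $S_+\alpha\beta\subseteq S_+$, or $S\subset S_+\alpha\beta$, i.e.\ the two leaves ``face'' each other, each lying on the positive side of the other; in that second configuration $S_+\alpha\beta$ contains $S\cup S_-$ but misses $S\alpha\beta\subset S_+$, so it is comparable with $S_+$ in neither direction and $\alpha\beta\notin\Gamma$. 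Nothing in your ``direct comparison of components'' excludes the facing configuration: it is perfectly compatible with separation, connectedness and a globally preserved co-orientation (think of the two boundary leaves of a Reeb band, along both of which the transverse orientation may point into the band, together with an orientation-preserving foliation homeomorphism exchanging their neighborhoods). Ruling out this configuration for the element $\gamma\delta$, using the commutativity of $\gamma\in A\cap\Gamma_+$ and $\delta\in A\cap\Gamma_-$ and the properties of the leaf system, is precisely what the proof of Lemma 3.9 of \cite{BBI2} does; the paper gives no written proof of the present lemma beyond that reference, so this is the argument you would need to reproduce or replace.

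A sanity check shows your argument proves too much: as you yourself observe, it never uses that $A$ is abelian, and in fact it never uses that $\alpha\beta$ is a product of elements of $\Gamma$ at all, so if the side-comparison were valid it would show that \emph{every} deck transformation lies in $\Gamma$, i.e.\ $\Gamma=L$ for every leaf system. That would make the hypothesis $S_+A=S_-A=\hat M$ in Lemma \ref{abelsub} and the hypothesis $S_+L=S_-L=\hat M$ in Lemma \ref{lzd} redundant, as well as the effort spent in the appendix's main theorem to establish $\Gamma=L$ under additional assumptions. So the missing ingredient is not the regularity point you flag at the end (that $S\alpha\beta$ is a properly embedded separating leaf is indeed unproblematic, being a deck translate of $S$); it is an actual argument, exploiting the commutation of $\gamma$ and $\delta$, that the facing configuration cannot occur for $\gamma\delta$.
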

For $X \subset \hat M$ and $H \subset L$, define the notation
$
    X H := \{ x \gamma : x \in \hat M,\ \gamma \in H\}.
$
\begin{lemma}
    [confer Lemma 3.11 of \cite{BBI2}] \label{abelsub}
    If $A$ is an abelian subgroup of $L$
    and $S_+ A = S_- A = \hat M$
    then $A \subset \Gamma$.
\end{lemma}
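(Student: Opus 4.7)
The plan is to prove directly the stronger statement $L \subseteq \Gamma$, which makes the hypotheses on $A$ unused for this particular conclusion.

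First I would fix an arbitrary $\gamma \in L$ and inspect the leaf $S\gamma$. Two basic observations drive the argument: since $\hat\FF$ is a (non-branching) foliation, any two of its leaves are either equal or disjoint; and since $\gamma$ covers the identity on $M$, it preserves the lifted transverse orientation of $\hat\FF$. If $S\gamma = S$, these give $S_+\gamma = S_+$ at once, placing $\gamma$ in $\Gamma_+ \cap \Gamma_-$.

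Next I would handle the case $S\gamma \ne S$. Here $S\gamma \cap S = \emptyset$, and connectedness of the leaf forces $S\gamma \subseteq S_+$ or $S\gamma \subseteq S_-$; I would treat only the first, the other being symmetric. Since $\gamma$ is a homeomorphism of $\hat M$ and $S$ separates $\hat M$ into the two connected halves $S_+$ and $S_-$, the translate $S\gamma$ separates $\hat M$ into the two connected halves $S_+\gamma$ and $S_-\gamma$. The closed set $\overline{S_-} = S \cup S_-$ is connected, disjoint from $S\gamma$ (as $S\gamma \subseteq S_+$), and sits on the negative transverse side of $S\gamma$ because $S$ itself is below $S\gamma$ in the co-orientation; hence $S \cup S_- \subseteq S_-\gamma$, and therefore $S_+\gamma \subseteq \hat M \setminus (S \cup S_-) = S_+$, i.e.\ $\gamma \in \Gamma_+$.

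The delicate step is identifying which of the two components of $\hat M \setminus S\gamma$ contains $S \cup S_-$; this is where I would invoke preservation of the transverse orientation by the deck transformation $\gamma$, which follows from $\gamma$ lifting the identity on $M$ together with the assumption that $\FF$ is transversely oriented on $M$. The argument never uses that $A$ is abelian nor that $S_+ A = S_- A = \hat M$, and I expect these hypotheses appear in the statement only for parallelism with Lemma 3.11 of \cite{BBI2}, whose branching-foliation setting (where two distinct ``leaves'' may share points) genuinely forces the more delicate argument through the abelian-group structure.
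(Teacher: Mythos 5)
Your key step is exactly the one that cannot be justified, and it is where the hypotheses you discard are actually needed. Knowing that $S\gamma\subset S_+$ tells you that $S\gamma$ lies on the positive side of $S$; it does \emph{not} tell you that $S$ lies on the negative side of $S\gamma$. A transverse orientation is only a coherent local choice of normal along each leaf; it induces no global ``above/below'' relation between disjoint leaves, and it is perfectly possible for two disjoint leaves of a transversely oriented foliation to lie each on the \emph{positive} side of the other (think of the two boundary leaves of a Reeb-type slab in the cover, whose co-orientations point towards each other), or each on the negative side of the other. Preservation of the lifted co-orientation by $\gamma$ (which is correct, and does settle the case $S\gamma=S$) is compatible with all four configurations: writing them out, one has $\gamma\in\Gamma_+$ when $S\gamma\subset S_+$ and $S\subset S_-\gamma$, $\gamma\in\Gamma_-$ when $S\gamma\subset S_-$ and $S\subset S_+\gamma$, while the two remaining ``facing'' and ``back-to-back'' configurations are exactly characterized by $S_-\cap S_-\gamma=\varnothing$ or $S_+\cap S_+\gamma=\varnothing$ and give $\gamma\notin\Gamma$. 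Nothing in your argument excludes these last two cases; the phrase ``because $S$ itself is below $S\gamma$ in the co-orientation'' is an assertion, not a proof.

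This is also visible from the structure of the appendix: if $L\subset\Gamma$ held unconditionally, the covering hypothesis in Lemma \ref{lzd}, the construction of the abelian subgroups $A_\gamma$ containing the center $Z$, the verification that $S_+A_\gamma=S_-A_\gamma=\tilde M$ before invoking this lemma, and the separate treatment of the case $S_+Z\neq\tilde M$ (where one passes to a quotient leaf system rather than concluding $\Gamma=L$) would all be superfluous. In the argument of \cite{BBI2} that the paper cites, the hypotheses are precisely what rule out the bad configurations: commutativity of $A$ together with $S_+A=S_-A=\hat M$ is used to show that a relation such as $S_+\cap S_+\gamma=\varnothing$ propagates to the translates and is incompatible with the translates of $S_+$ and $S_-$ covering $\hat M$. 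Your closing remark that the hypotheses matter only in a branching setting is a misreading: Lemma 3.11 of \cite{BBI2} concerns honest leaves as here, and the difficulty is the one above, not branching. To repair the proof you must genuinely handle the cases $S_+\cap S_+\gamma=\varnothing$ and $S_-\cap S_-\gamma=\varnothing$ using the abelian group and the covering hypothesis, rather than dismiss them via the transverse orientation.
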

\begin{lemma} \label{quotient}
    Suppose $(M, \tilde M, \FF, S)$ is a leaf system, where $\tilde M$ is the
    universal cover of $M$, $N$ is a normal subgroup of the group of deck
    transformations and
    $
        \tilde U := S_+ N  \ne  \tilde M.
    $
    Then, for $\hat M = \tilde M / N$ and $\hat U = \tilde U / N$, and any
    connected component $C$ of the (non-empty) boundary of $\hat U$, the tuple
    $(M, \hat M, \FF, C)$ is a leaf system.
          \end{lemma}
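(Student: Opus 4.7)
The first three clauses of the leaf-system definition hold for free: $M$ is compact without boundary and $\FF$ is transversely oriented by assumption, while $\hat M = \tilde M/N$ is a normal cover of $M$ since $N$ is normal in the deck group. What needs work is identifying $C$ as a leaf of $\hat \FF$ and ruling out closed loops topologically transverse to $\hat \FF$ that pass through $C$.

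The first structural step is to show $\partial \tilde U = SN := \bigcup_{\gamma \in N} S\gamma$. The inclusion $\partial \tilde U \subset SN$ is immediate, since $\tilde U$ is a union of open half-spaces $S_+\gamma$ whose boundaries are the leaves $S\gamma$. For the reverse, if $S$ met $\tilde U$ then some point of $S$ would lie in some $S_+\gamma$ with $\gamma \in N$, and connectedness of $S$ together with disjointness of distinct leaves would force $S \subset S_+\gamma \subset \tilde U$; by $N$-invariance of $\tilde U$ this would give $SN \subset \tilde U$ and hence $\partial \tilde U = \emptyset$, contradicting the non-emptiness hypothesis. So $S \cap \tilde U = \emptyset$, and by $N$-invariance $SN \cap \tilde U = \emptyset$, yielding $SN \subset \partial \tilde U$. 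Since $N$ acts transitively on $\{S\gamma : \gamma \in N\}$, the projection $\pi:\tilde M \to \hat M$ collapses $\partial \tilde U$ to the single image $\pi(S)$, and because leaves of $\tilde \FF$ project onto leaves of $\hat \FF$, this image is a leaf. Thus $C = \pi(S)$ is a leaf of $\hat \FF$, verifying (iv).

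For the transverse-loop condition, suppose for contradiction that $\sigma$ is a closed loop in $\hat M$, topologically transverse to $\hat \FF$, meeting $C$ at $q$. Pick $p \in S \cap \pi^{-1}(q)$ and lift $\sigma$ to $\tilde \sigma:[0,1]\to \tilde M$ with $\tilde \sigma(0)=p$; then $\tilde \sigma(1) = p\cdot n$ for some $n \in N = \pi_1(\hat M)$. If $n = e$, the lift is already a closed transverse loop through $p \in S$, contradicting the leaf-system hypothesis for $(M,\tilde M,\FF,S)$. If $n \ne e$ and $S\cdot n \ne S$, then $p\cdot n \notin S$ so the last intersection $t_*$ of $\tilde \sigma$ with $S$ satisfies $t_* < 1$; the segment $\tilde \sigma|_{[t_*,1]}$ lies on a single side of $S$ and lands on $S\cdot n$, forcing the entire connected leaf $S\cdot n$ into $S_+$ or into $S_-$. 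In the first case $S\cdot n \subset S_+ \subset \tilde U$, contradicting $S\cdot n \subset \partial \tilde U$; in the second $S \subset (S\cdot n)_+ = S_+\cdot n \subset \tilde U$, contradicting $S \subset \partial \tilde U$.

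The main obstacle is the remaining case $n \ne e$ and $S\cdot n = S$, where $\tilde \sigma$ is not closed yet both endpoints lie in $S$. Here I close up the lift by adjoining a path $\alpha \subset S$ from $p\cdot n$ back to $p$ (possible by path-connectedness of $S$); under the convention from \cite{BBI2} that a topologically transverse curve may contain sub-arcs lying tangentially within single leaves, the concatenation $\tilde \sigma \cdot \alpha$ is a closed topologically transverse loop in $\tilde M$ passing through $p \in S$, once more contradicting the leaf-system hypothesis. This exhausts all cases and verifies (v).
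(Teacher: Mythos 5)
Your argument rests on the structural claim that $\partial \tilde U = \bigcup_{\gamma\in N} S\gamma$ and hence that $C = \pi(S)$, and you call the inclusion $\partial\tilde U \subset SN$ ``immediate.'' This is the gap: for an \emph{infinite} union (and $N$ is infinite in every relevant case, e.g.\ the center of the Heisenberg lattice), the boundary of $\bigcup_\gamma S_+\gamma$ is not contained in $\bigcup_\gamma \partial(S_+\gamma)$ --- compare $\bigcup_k (1/k,\infty)=(0,\infty)$, whose boundary point $0$ lies on none of the sets $\{1/k\}$. Concretely, $\partial\tilde U$ may consist of \emph{limit leaves} that are not translates of $S$; worse, if some translate satisfies $S\gamma\subset S_-$ (nothing in the leaf-system axioms forbids this), then $S\subset S_+\gamma\subset\tilde U$, so $S$ lies in the \emph{interior} of $\tilde U$ and $\pi(S)$ is not in $\partial\hat U$ at all, while $\tilde U\ne\tilde M$ can still hold (the translates $S\gamma^k$ may accumulate on a leaf fixed by the induced action, or on an exceptional-type lamination). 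The lemma itself anticipates this: it speaks of ``any connected component $C$'' of a boundary that may have several components. Since your entire verification of the no-closed-transversal condition (lifting through $p\in S$, the case analysis on $S\cdot n$, and both ``contradictions'' with $S\cdot n\subset\partial\tilde U$ and $S\subset\partial\tilde U$) presupposes $C=\pi(S)$ and $SN\subset\partial\tilde U$, it does not apply to a genuine boundary component. Two secondary problems: the step ``$S n\subset S_-$ forces $S\subset(Sn)_+$'' fails when $S$ and $Sn$ are non-separated leaves (both can lie in the closed negative side of the other), and closing up the lift by a tangential arc inside $S$ uses a convention on ``topologically transverse'' loops that is not the paper's definition (this one is repairable by the standard construction of a genuine closed transversal from a returning transverse arc, but it is not what you wrote).

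The paper's proof avoids identifying the boundary leaves altogether. It only uses that $\hat U$ is open, saturated, nonempty and proper, and that --- because $\tilde U$ is a union of \emph{positive} half-spaces and each properly embedded leaf $S\gamma$ separates $\tilde M$ --- a coherently oriented transverse curve can cross $\partial\hat U$ in only one direction: once such a curve enters $\hat U$ it can never leave (equivalently, all boundary leaves carry the same co-orientation relative to $\hat U$). A closed loop transverse to $\hat\FF$ through a point of $C\subset\partial\hat U$ would have to enter $\hat U$ immediately after the crossing and then return to a point outside $\hat U$, which is impossible. This one-way-crossing argument is what you need in place of the identification $C=\pi(S)$; it works for an arbitrary boundary component, including limit leaves.
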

\begin{proof}
    As $\tilde U$ is $N$-invariant, it is easy to see that
    $\varnothing  \ne  \hat U  \ne  \hat M$ 
    and that $\hat U$ is open and is a union of leaves of the lift $\hat \FF$
    of the foliation $\FF$.  Further, while the boundary of $\hat U$ many have
    several components, each has the same orientation, and so any curve
    transverse to $\hat \FF$ which exits $\hat U$, cannot later re-enter $\hat
    U$.  This shows the last item in the definition of ``leaf system'' and the
    others are immediate.
\end{proof}
\begin{lemma}
    [confer Lemma 3.12 of \cite{BBI2}] \label{lzd}
    Suppose the leaf system is such that there is a group isomorphism
    $h:L \to \Zd$ and $S_+ L = S_- L = \hat M.$
    Then, $\Gamma = L$ and there is a hyperplane $P$ separating $\Rd$ into
    closed half-spaces $H_+$ and $H_-$ such that $h(\Gamma_+) \subset H_+$ and
    $h(\Gamma_-) \subset H_-$.
\end{lemma}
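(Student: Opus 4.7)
The plan is to first dispose of the claim $\Gamma = L$ and then produce the separating hyperplane via an algebraic/convex-geometric argument on the semigroup $\Gamma_+$. Since $L \cong \Z^d$ is abelian and we are given $S_+L = S_-L = \hat M$, Lemma \ref{abelsub} applied to $A=L$ yields $L \subset \Gamma$, so $\Gamma = L$. It is then straightforward from the definitions to check that $\Gamma_+$ is closed under composition (a sub-semigroup containing the identity) and that $\gamma \in \Gamma_+$ iff $\gamma^{-1} \in \Gamma_-$; in additive notation, $h(\Gamma_-) = -h(\Gamma_+)$.

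The core step is the following key lemma, which uses in an essential way that $L$ is abelian: if $\gamma_1,\dots,\gamma_k \in \Gamma_+$ and $\gamma_1\gamma_2\cdots\gamma_k = \mathrm{id}$, then every $\gamma_i$ actually lies in $\Gamma_0 := \Gamma_+ \cap \Gamma_-$. I would prove this by considering the chain
\[
    S_+ \;\supset\; S_+\gamma_1 \;\supset\; S_+\gamma_1\gamma_2 \;\supset\; \cdots \;\supset\; S_+\gamma_1\cdots\gamma_k = S_+,
\]
where each inclusion uses both $\gamma_{i+1}\in\Gamma_+$ and commutativity in $L$ to rewrite $S_+\gamma_1\cdots\gamma_{i+1} = (S_+\gamma_{i+1})\gamma_1\cdots\gamma_i \subset S_+\gamma_1\cdots\gamma_i$. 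Since the first and last terms coincide, every inclusion is an equality, which forces $S_+\gamma_i = S_+$ and hence $\gamma_i \in \Gamma_0$ for every $i$.

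Next, transport this to $\R^d$ via $h$. Let $C \subset \R^d$ be the convex cone generated by $h(\Gamma_+)$ and let $W = C \cap (-C)$ be its largest linear subspace. Any relation expressing $w \in W \cap \Q^d$ as a positive rational combination of elements of $h(\Gamma_+)$, combined with a similar expression for $-w$, clears denominators to an integral relation $\sum v_i = 0$ with $v_i \in h(\Gamma_+)$. The key lemma forces every $v_i$ to lie in $h(\Gamma_0)$, and hence $W \subset \operatorname{span}_\R h(\Gamma_0)$; the reverse inclusion is immediate, so $W = \operatorname{span}_\R h(\Gamma_0)$.

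Finally, the quotient cone $C/W$ inside $(C-C)/W$ is pointed, so by a standard separation argument (Hahn--Banach, or just finite-dimensional convex geometry) there is a linear functional $\ell$ on $(C-C)/W$ that is nonnegative on $C/W$ and vanishes only on the origin; pulling it back and extending by zero to a complement of $C-C$ yields a nontrivial $\ell : \R^d \to \R$ with $\ell \geq 0$ on $h(\Gamma_+)$ and $\ell \leq 0$ on $h(\Gamma_-)$. Then $P = \ker\ell$ is the desired hyperplane, with $H_\pm = \{\pm\ell \geq 0\}$. (One should check that the degenerate case $C-C = \{0\}$ cannot occur: it would give $\Gamma_+ = \Gamma_- = L$, so every deck transformation would preserve $S_+$, contradicting $S_+L = \hat M \neq S_+$.) The main obstacle is the chain-of-inclusions argument for the key lemma, where the abelianness of $L$ must be used at each step; once that is in hand, the passage from the semigroup property to the hyperplane is routine convex geometry.
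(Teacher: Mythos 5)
Your first steps are fine: $\Gamma=L$ via Lemma \ref{abelsub}, the semigroup property of $\Gamma_+$, the identity $\Gamma_-=\Gamma_+^{-1}$, and the key lemma (which is in fact immediate from the semigroup property: if $\gamma_1\cdots\gamma_k=\mathrm{id}$ with all $\gamma_i\in\Gamma_+$, then $\gamma_1^{-1}=\gamma_2\cdots\gamma_k\in\Gamma_+$, so $\gamma_1\in\Gamma_-$). Note the paper itself offers no proof of this lemma, deferring to Lemma 3.12 of \cite{BBI2}, so I judge your argument only on its own completeness. The genuine gap is in the final separation step: your construction of $\ell$ silently assumes that the cone $C$ generated by $h(\Gamma_+)$ is not all of $\Rd$. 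If $C=\Rd$ (equivalently $W=\Rd$), then $(C-C)/W=\{0\}$ and your recipe returns $\ell\equiv 0$, i.e.\ no hyperplane at all. The degenerate case you do address, $C-C=\{0\}$, is the wrong one and is handled incorrectly: it would give $\Gamma_+=\{\mathrm{id}\}$ and hence $L=\Gamma_+\cup\Gamma_-=\{\mathrm{id}\}$, not ``$\Gamma_+=\Gamma_-=L$'' as you assert. Nothing abstract rules out $C=\Rd$: the semigroup $P=\Zd$ satisfies all the order-theoretic properties you establish, so excluding this case must use the hypothesis $S_+L=\hat M$ a second time, and that argument is missing.

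The fix is short and uses your own tools. If $C=\Rd$, then every $v\in\Zd$ is a nonnegative real combination of elements of $h(\Gamma_+)$; by Carath\'eodory one may take these linearly independent, so the coefficients are rational, and clearing denominators shows some positive multiple $Nv$ lies in $h(\Gamma_+)$. Applying this to $v$ and $-v$ puts a positive multiple of $v$ in $h(\Gamma_0)$, so $\Gamma_0$ has full rank, hence finite index $m$ in $L$. Then for $\gamma\in\Gamma_+$ the relation $\gamma\cdots\gamma\cdot\gamma^{-m}=\mathrm{id}$ (with $m$ copies of $\gamma$ and $\gamma^{-m}\in\Gamma_0\subset\Gamma_+$) and your key lemma force $\gamma\in\Gamma_0$; thus $\Gamma_+=\Gamma_-=\Gamma_0=L$ and $S_+L=S_+\neq\hat M$, contradicting the hypothesis. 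Two smaller points: a pointed but non-closed cone need not admit a functional that is nonnegative on it and vanishes only at the origin (e.g.\ the open upper half-plane together with one boundary ray), but you only need a nonzero functional nonnegative on $C$, which exists whenever $C\neq\Rd$ since then $\overline C\neq\Rd$ and one separates a point from $\overline C$; and your identification $W=\operatorname{span}_{\R}h(\Gamma_0)$ is argued only for rational points of $W$ --- for general $w\in W$ one must first replace the real positive vanishing combination by an integral one (the solution space of the relevant linear system is rational, so strictly positive rational solutions exist near a strictly positive real one). With these repairs your approach goes through and is in the spirit of the argument of \cite{BBI2}.
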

All of the results listed so far hold for any $C^0$ foliation $\FF$.  To
proceed further, we need to add an assumption of ``uniform structure.''

\begin{assumption}
    There are constants $r,R>0$ such that if $x \in S_+$ then there is $y \in
    \hat M$ such that
    $
        B_r(y) \subset B_R(x) \cap S_+.
    $
    Similarly, if $x \in S_-$, there is $y \in \hat M$ such that
    $
        B_r(y) \subset B_R(x) \cap S_-.
    $  \end{assumption}
This property holds for any foliation tangent to a $C^0$ subbundle
of the tangent bundle, so long as the base space $M$ is compact.

\begin{lemma} \label{nets}
    If $X = \{x_1, \ldots, x_n\}$ is an $r$-net of a fundamental domain of
    $\hat M$, then
    $
        X L \cap S_+
    $
    is an $R$-net for $S_+$.
    Further, if $\Gamma = L$ and $X \cap S_+ = \varnothing$, then
    $
        X \Gamma_+
    $
    is an $R$-net for $S_+$.
\end{lemma}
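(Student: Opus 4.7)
The plan is to use the uniform structure assumption to upgrade an $r$-net of $\hat M$ obtained from $X$ to an $R$-net of the open halfspace $S_+$.

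For the first assertion, fix $p \in S_+$. By the uniform structure assumption there is $y \in \hat M$ with $B_r(y) \subset B_R(p) \cap S_+$; in particular $y \in S_+$. Since $X$ is an $r$-net of a fundamental domain $F$ and the deck transformations act isometrically on $\hat M$ with fundamental domain $F$, the set $XL$ is an $r$-net of all of $\hat M$: for any $w \in \hat M$, choose $\gamma \in L$ with $w\gamma\inv \in F$ and then $x \in X$ with $d(w\gamma\inv, x) < r$, giving $d(w, x\gamma) < r$. Applied to $y$, this produces $x \in X$ and $\gamma \in L$ with $x\gamma \in B_r(y) \subset B_R(p) \cap S_+$. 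Hence every point of $S_+$ lies within distance $R$ of a point of $XL \cap S_+$, as required.

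For the second assertion, repeat the same construction, obtaining $x \in X$ and $\gamma \in L$ with $x\gamma \in B_R(p) \cap S_+$. It remains to verify that $\gamma \in \Gamma_+$, since then $x\gamma \in X\Gamma_+$ and we are done. Since $X \cap S_+ = \varnothing$, we have $x \in \hat M \setminus S_+ = S \cup S_-$. The hypothesis $\Gamma = L$ forces $\gamma \in \Gamma_+ \cup \Gamma_-$. If $\gamma$ were in $\Gamma_-$, then $S_+\gamma \supset S_+$, and taking complements in $\hat M$ yields
\[
    (S \cup S_-)\gamma = \hat M \setminus (S_+\gamma) \subset \hat M \setminus S_+ = S \cup S_-,
\]
so $x \in S \cup S_-$ would force $x\gamma \in S \cup S_-$, contradicting $x\gamma \in S_+$. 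Therefore $\gamma \in \Gamma_+$.

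The only genuinely non-formal input is the uniform structure assumption: without it one could imagine $S_+$ pinching to arbitrarily small transverse thickness near some point, so that no ball of radius $r$ fits inside $S_+$ near $p$, and the argument would collapse. I do not expect any other step to present difficulty: the first part is a direct transfer of the $r$-net property across the covering, and the second is the elementary observation that any element of $\Gamma_-$ preserves the closed halfspace $S \cup S_-$ setwise and therefore cannot carry a point of $S \cup S_-$ into $S_+$.
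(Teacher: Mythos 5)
Your proof is correct and takes essentially the same route as the paper's: the first assertion follows from the uniform-structure assumption together with the fact that $XL$ is an $r$-net of $\hat M$, and the second from the observation that $x \notin S_+$ while $x\gamma \in S_+$, which together with $L = \Gamma = \Gamma_+ \cup \Gamma_-$ forces $\gamma \in \Gamma_+$. The only cosmetic difference is that you rule out $\gamma \in \Gamma_-$ by noting that such a $\gamma$ preserves the closed complement of $S_+$, whereas the paper phrases the same step as ``$S_+\gamma^{-1}$ is not a subset of $S_+$.''
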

\begin{proof}
    The first part follows immediately from the above assumption and the fact
    that $X L$ is an $r$-net for all of $\hat M$.
    To prove the second part, suppose $y \in S_+$.
    By the first part, there is $x_i \in X$ and $\gamma \in L$
    such that $y \in B_R(x_i \gamma)$ and $x_i \gamma \in S_+$.
    Then, $x_i$ demonstrates that $S_+ \gamma \inv$ is not a subset of $S_+$,
    which, in the special case of
    $L = \Gamma = \Gamma_+ \cup \Gamma_-$
    implies that $\gamma \in \Gamma_+$.
      \end{proof}
We say that a codimension one manifold $P$ has the same ``ordering'' as
$S$ if $P$ splits $\hat M$ into two subspaces $P_+$ and $P_-$ where
$P_+ \gamma \subset P_+$
for all $\gamma \in \Gamma_+$ and 
$P_+ \gamma \supset P_+$
for all $\gamma \in \Gamma_-$.

\begin{prop} \label{sameorder}
    Suppose $\Gamma = L$ and $S_+ \Gamma = S_- \Gamma = \hat M$ and that there
    is a codimension one manifold $P$ with the same ``ordering'' as $S$.  Then
    $S$ lies a finite distance from $P$.
\end{prop}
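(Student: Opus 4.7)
\emph{Proof plan.}
The strategy is to argue by contradiction, combining the cocompactness of the $L=\Gamma$ action with the ``same ordering'' hypothesis. Suppose $S$ does not lie at finite Hausdorff distance from $P$. Up to exchanging the roles of $S\leftrightarrow P$ and $+\leftrightarrow -$, we may find a sequence $p_n \in P$ with $d_n := d(p_n, S)\to\infty$ and, after extraction, $p_n \in S_+$ for all $n$, so $B_{d_n}(p_n) \subset S_+$. Choose deck transformations $\gamma_n \in \Gamma$ with $q_n := p_n\gamma_n$ in a fixed compact fundamental domain $F$, and extract further so that $q_n \to q \in F$ and either all $\gamma_n$ lie in $\Gamma_+$ or all lie in $\Gamma_-$. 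Since $\gamma_n$ is an isometry, $B_{d_n}(q_n) \subset S_+\gamma_n$.

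If $\gamma_n \in \Gamma_+$, then by definition $S_+\gamma_n \subset S_+$, hence $B_{d_n}(q_n) \subset S_+$. Since $d_n \to \infty$ while $q_n$ remains bounded, this forces $S_+ = \hat M$, contradicting the existence of the leaf $S$. The idea is to reduce the harder case $\gamma_n \in \Gamma_-$ to this easy case by replacing the sequence $p_n$ with a companion sequence $y_n$ in the ``discrepant'' quadrant $S_+ \cap P_-$ that escapes to infinity from \emph{both} surfaces. For such $y_n$, the ball $B_{d'_n}(y_n)$ of radius $d'_n := \min(d(y_n,S), d(y_n,P))$ is simultaneously contained in $S_+$ and in $P_-$; re-running the cocompactness reduction on $y_n$, either the new $\gamma'_n$ lies in $\Gamma_+$ and $S_+\gamma'_n \subset S_+$ forces $S_+ = \hat M$, or $\gamma'_n \in \Gamma_-$ and the ``same ordering'' hypothesis $P_-\gamma'_n \subset P_-$ forces $P_- = \hat M$---either way a contradiction.

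The main obstacle is therefore the construction of the runaway sequence $y_n \in S_+ \cap P_-$ with both $d(y_n, S)\to\infty$ and $d(y_n, P)\to\infty$. The natural starting point is to apply the uniform structure assumption at $p_n$ (for $P$, which will have the same form of uniform structure in applications) to obtain a nearby point $p''_n \in P_-$ within distance $R$ of $p_n$; this $p''_n$ automatically lies in $S_+ \cap P_-$, but it is only guaranteed to be a bounded distance from $P$. To push $d(y_n, P)\to\infty$ as well, I expect one must exploit the geometry of $P$ inside the large ball $B_{d_n}(p_n) \subset S_+$: the complete codimension-one submanifold $P$ passes through the center $p_n$ of this ball and separates it into $P_+$- and $P_-$-regions, at least one of which should contain a point at distance comparable to $d_n$ from $P$. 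Choosing $y_n$ on the $P_-$-side yields the sought-after runaway sequence. Making this separation argument and the accompanying depth estimate rigorous, with no regularity assumption on $P$ beyond being a codimension-one submanifold, is the subtle technical step, and is where one genuinely uses that $P$ separates $\hat M$ into the two sides $P_+$ and $P_-$ in a globally coherent way.
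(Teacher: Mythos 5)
Your skeleton (translate large one-sided balls back to a fundamental domain and use $S_+\gamma\subset S_+$ or $P_-\gamma\subset \overline{P_-}$ to force one complementary component to be everything) is fine, but the whole weight of the argument rests on the step you leave open, and that step is genuinely problematic. You need a sequence $y_n\in S_+\cap P_-$ with \emph{both} $d(y_n,S)\to\infty$ and $d(y_n,P)\to\infty$, and your proposed justification --- that the ball $B_{d_n}(p_n)\subset S_+$ centered at $p_n\in P$ must contain a point at distance comparable to $d_n$ from $P$ --- is false for a general codimension-one manifold $P$. A properly embedded separating surface can be coarsely dense in $\hat M$ (already in $\bbR^3$, the graph of a steep large-amplitude sawtooth function separates and yet every point of $\bbR^3$ lies within uniformly bounded distance of it), and the Proposition imposes no ``uniform structure'' on $P$: the Assumption preceding Lemma \ref{nets}, the no-closed-transversal condition, and the hypothesis $S_+\Gamma=S_-\Gamma=\hat M$ all concern $S$ only. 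For the same reason the opening reduction ``up to exchanging the roles of $S\leftrightarrow P$'' is not available: the statement and the hypotheses are asymmetric in $S$ and $P$, so you cannot trade a sequence of points of $S$ escaping from $P$ (which is what the negation of the conclusion actually gives you) for a sequence of points of $P$ escaping from $S$. Either way your scheme runs into the same ``bad case,'' and filling it would require exactly the kind of uniform control that you never invoke.

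By contrast, the paper's proof is direct and short, and it is worth seeing why it avoids your obstacle. Using $S_-\Gamma=\hat M$, choose a fundamental domain inside $S_-$ and an $r$-net $X=\{x_1,\dots,x_n\}$ of it; by Lemma \ref{nets} (which is where the uniform structure Assumption enters), $X\Gamma_+$ is an $R$-net of $S_+$. Fixing $y\in P_+$ and $D=\max_i d(x_i,y)$, each point $x_i\gamma$ with $\gamma\in\Gamma_+$ lies within $D$ of $y\gamma\in P_+\gamma\subset P_+$, so every point of $S_+$ is within $R+D$ of $P_+$; symmetrically every point of $S_-$ is within bounded distance of $P_-$, and since points of $S$ are limits of points from both sides, $S$ stays a bounded distance from $P$. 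Note that this argument needs no information about how deep points sit on either side of $P$; it only pushes a fixed finite configuration around by $\Gamma_+$ and $\Gamma_-$. Your proposal, which never uses Lemma \ref{nets}, the Assumption, or the hypothesis $S_\pm\Gamma=\hat M$, would have to recover some substitute for this control, and as it stands it does not.
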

\begin{proof}
    Since $S_- \Gamma = \hat M$, we can find a fundamental domain for $\hat
    M$ inside of $S_-$.  Let $X = \{x_1, \ldots, x_n\}$ be an $r$-net for this
    fundamental domain, where $r$ is as in Lemma \ref{nets}.
    Fix a point $y \in P_+$ and set $D = \sup_i d(x_i, y)$ where $d$ is distance
    measured on $\hat M$.  Then, for $\gamma \in \Gamma_+$,
    $d(x_i \gamma, y \gamma)  \le  D$, and $x_i \gamma \in P_+ \gamma \subset P_+$.
    This means that
    every point of $X \Gamma_+$, is at distance at most $D$ from $P_+$.  By
    Lemma \ref{nets}, $X \Gamma_+$ is an $R$-net for $S_+$ and so every point in
    $S_+$ is at distance at most $R+D$ from $P_+$.
    A similar argument shows that every point in $S_-$ is a bounded distance
    from $P_-$ and completes the proof.
\end{proof}
Now consider the specific case of the Heisenberg group.

\begin{prop} \label{nilorder}
    If a relation ``$ \le $'' on the Heisenberg lattice
    \[    
        \Gamma = \langle x,y,z : x y = y x z, x z=z x, y z=z y \rangle
    \]
    satisfies all of the following properties
    \begin{itemize}
        \item reflexivity: $u  \le  u$,
        \item totality: either $u  \le  v$ or $v  \le  u$ or both,
        \item transitivity: $u  \le  v$ and $v  \le  w$ implies $u  \le  w$,
        \item right-invariance: $u  \le  v$ implies $u w  \le  v w$, and
        \item an ``Archimedean'' property for $z$:
        for all $u \in \Gamma$, there is $k(u) \in \bbZ$ such that
        \[
            z ^ {k(u)}  \le  u  \le  z ^ {k(u)+1},
        \]  \end{itemize}
    then $ \le $ is trivial: $u  \le  v  \le  u$ for all $u$ and $v$.
\end{prop}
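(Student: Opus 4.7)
The plan is to encode the Archimedean hypothesis as an almost-additive integer ``height'' function $k\colon \Gamma \to \bbZ$, and then exploit the Heisenberg commutator identity $[x^n, y^n] = z^{n^2}$ to produce arbitrarily large powers of $z$ via products whose heights cancel. This forces a contradiction unless the relation is trivial.

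Fix, for each $u \in \Gamma$, an integer $k(u)$ with $z^{k(u)} \leq u \leq z^{k(u)+1}$. Centrality of $z$ combined with right-invariance of $\leq$ shows that left-multiplication by any power of $z$ also preserves $\leq$: indeed $z^m u = u z^m$, and right-invariance then gives $z^m u_1 \leq z^m u_2$ whenever $u_1 \leq u_2$. Combining this observation with the Archimedean brackets for $u$ and $v$ yields
\[
    z^{k(u)+k(v)} \;\leq\; z^{k(u)} v \;\leq\; uv \;\leq\; z^{k(u)+1} v \;\leq\; z^{k(u)+k(v)+2}.
\]
Provided the powers of $z$ are strictly ordered, comparison with the Archimedean bracket for $uv$ forces $|k(uv) - k(u) - k(v)| \leq C$ for a universal constant $C$.

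Write $u \simeq v$ when $u \leq v$ and $v \leq u$. The set $\{m \in \bbZ : z^m \simeq e\}$ is a subgroup of $\bbZ$, so it is either all of $\bbZ$, of finite index $\geq 2$, or trivial. If it is all of $\bbZ$, then every $z^m \simeq e$, and the Archimedean bracket immediately gives $u \simeq e$ for every $u \in \Gamma$, which is the desired conclusion. Finite index $\geq 2$ is impossible, since $\langle z \rangle/\!\simeq$ would then be a nontrivial finite totally ordered group, and any element $> e$ in a finite group cycles via right-invariance to give a contradiction with transitivity. Hence the powers of $z$ are strictly ordered. Totality and the Archimedean bracket applied to $u = e$ now rule out the orientation $z < e$ (this would require $k(e)$ to be simultaneously $\geq 0$ and $\leq -1$), forcing $e < z$ and $z^a < z^b$ iff $a < b$. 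In particular $k(z^m) \in \{m-1, m\}$ for every $m$.

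A direct induction on $xy = yxz$ gives $x^a y^b = y^b x^a z^{ab}$ for all $a, b \in \bbZ$, hence the commutator identity $[x^n, y^n] = x^n y^n x^{-n} y^{-n} = z^{n^2}$. Iterating the almost-additivity bound from the second paragraph across the four factors of $x^n \cdot y^n \cdot x^{-n} \cdot y^{-n}$ gives
\[
    k(z^{n^2}) \;=\; k(x^n) + k(y^n) + k(x^{-n}) + k(y^{-n}) + O(1).
\]
Since $k(e)$ is bounded and $x^n \cdot x^{-n} = e = y^n \cdot y^{-n}$, almost-additivity also forces $k(x^n) + k(x^{-n}) = O(1)$ and $k(y^n) + k(y^{-n}) = O(1)$, so $k(z^{n^2}) = O(1)$ uniformly in $n$. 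This contradicts $k(z^{n^2}) = n^2 + O(1) \to \infty$ obtained in the third paragraph. Therefore the nontrivial branch of the dichotomy cannot occur, and $\leq$ is trivial. The main technical difficulty is the quantitative step of the second paragraph, turning the qualitative Archimedean hypothesis into a genuine almost-additive quasi-morphism on all of $\Gamma$; once that is in place, the commutator identity does the rest.
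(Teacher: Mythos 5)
Your proof is correct, and its engine is the same as the paper's: the Heisenberg identity $x^n y^n = y^n x^n z^{n^2}$ played against the Archimedean brackets for $x$ and $y$, so that a quadratic power of $z$ gets trapped by linearly growing ones. The paper reaches the conclusion more directly---from $z^i \le x \le z^{i+1}$ and $z^j \le y \le z^{j+1}$ it derives $z^{n^2} \le z^{2n}$ for all $n\in\bbZ$, which already forces all powers of $z$ to be equivalent and hence the relation trivial---so your dichotomy on $\{m : z^m \simeq e\}$, the orientation lemma, and the almost-additivity of the height $k$ are sound but avoidable scaffolding.
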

\begin{proof}
    Using the Archimedean property, let $i,j \in \bbZ$ be such that
    $z ^ i  \le  x  \le  z ^ {i+1}$
    and
    $z ^ j  \le  y  \le  z ^ {j+1}$.
    Since $z$ commutes with both $x$ and $y$, for any $n \in \bbZ$
    \[
        z^{(i+j)n}  \le  x^n y^n  \le  z^{(i+j+2)n}
    \]
    and
    \begin{align*}
        z^{(i+j)n}  &\le  y^n x^n \phantom{z^{n^2}}  \le  z^{(i+j+2)n}  \quad \Rightarrow \\  
        z^{(i+j)n+n^2}  &\le  y^n x^n z^{n^2}  \le  z^{(i+j+2)n+n^2}.
    \end{align*}
    Notice that $x^n y^n = y^n x^n z^{n^2}$ and therefore
    \[
        z^{(i+j)n+n^2}  \le  z^{(i+j+2)n}  \quad \Rightarrow \quad  z^{n^2}  \le  z^{2n}
          \]
    for all $n \in \bbZ$.
    This is enough to deduce $z^{k_1}  \le  z^{k_2}$ for any two integers
    $k_1, k_2$, and then by the Archimedean property
    \[
        u  \le  z ^ {k(u) + 1}  \le  z ^ {k(v)}  \le  v
    \]
    for any $u$ and $v$, showing that the relation is trivial.
\end{proof}

%NOTE: The next two paragraphs are probably too repetitive given the material in
%the other sections of the paper and should be cut down.

%Response by Rafael: I think it is a little repetitive, but since it is an appendix and the result itself can be interesting independently of the rest of the paper I guess it does not harm to make a little redundant. 

%    
Any three-dimensional nilmanifold can be thought of as a circle bundle over a
2-torus.
If the bundle is trivial, the manifold is the 3-torus.  If it is
non-trivial, then it can be finitely covered by a manifold $M$ whose
fundamental group is exactly as in Property \ref{nilorder}.  Therefore, we will
only consider this specific case $M$.
The base space $\bbT^2$ has universal cover $\bbR^2$.  Therefore, the bundle $M$
is covered by a circle bundle $\hat M$ whose base space is $\bbR^2$.
Both $M$ and $\hat M$ have as universal cover the Heisenberg group, here
denoted by $\tilde M$.
For a Lie group homomorphism $\pi: \tilde M \to \bbR$, recall the definition of
the foliation $\FF_\pi$ where each leaf of $\tFF_\pi$ is a level-set of
$\pi$.
%The covering map $\tilde M \to \hat M$ composed with the projection $\hat M \to
%\bbR^2$ defines a projection $\pi:\tilde M \to \bbR^2$.  A \emph{geometric plane} in
%$\tilde M$ is a set of the form $\pi \inv(L)$ where $L$ is a geometric line in
%$\bbR^2$.
%
%If the Heisenberg group is thought of as the set of all matrices of the form
%\[
%    \left(
%       \begin{matrix}
%       1 & x & z \\
%       0 & 1 & y \\
%       0 & 0 & 1  \end{matrix}
%    \right)
%\]
%then a geometric plane is a subset of matrices where the entries $x$ and $y$
%satisfy a linear equation of the form $a x + b y = c$.

\begin{thm}
    If $M$ is a non-toral three-dimensional nilmanifold, and $\FF$ is a $C^0$
    Reebless foliation on $M$ with uniform structure (as explained above),
    then $\FF$ is almost aligned with $\FF_\pi$ for some $\pi$.
\end{thm}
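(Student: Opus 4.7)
The plan is to analyze the leaf system $(M, \Heis, \FF, S)$ obtained by fixing a leaf $S$ of the lifted foliation $\tilde \FF$ on the universal cover $\tilde M = \Heis$, whose deck transformation group $L$ is the Heisenberg lattice $\Gamma$. After passing to a double cover if needed to ensure transverse orientability, Reeblessness of $\FF$ guarantees no closed transversal passes through $S$, so the last clause of the definition of a leaf system is satisfied. Totality of the induced ordering, namely $\Gamma_+ \cup \Gamma_- = L$, follows because any translate $\gamma S$ is a leaf of $\tilde \FF$ that either coincides with $S$ or, by Reeblessness, is disjoint from $S$, in which case it lies entirely in one of the half-spaces $S_\pm$.

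The heart of the argument is to show that the central generator $z$ lies in $\Gamma_+ \cap \Gamma_-$, so that $z$ preserves each side of $S$. I would argue by contradiction: if $z \notin \Gamma_+ \cap \Gamma_-$, then the relation $u \le v \iff v u^{-1} \in \Gamma_+$ on the Heisenberg lattice is a nontrivial total, reflexive, transitive, right-invariant ordering. Using the uniform structure hypothesis and cocompactness of $\Gamma$, one shows that the $z$-translates of $S$ sweep $\tilde M$ in the center direction, which yields the Archimedean property for $z$ required by Proposition \ref{nilorder}. That proposition then forces the ordering to be trivial, but triviality forces $z \in \Gamma_+ \cap \Gamma_-$, a contradiction. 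Verifying this Archimedean property is the main obstacle: it is where the specific geometry of the Heisenberg group, rather than just the lattice structure, enters, and the uniform structure hypothesis must be invoked with care to link the abstract ordering on $\Gamma$ to the metric geometry on $\Heis$.

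With $\langle z \rangle \subset \Gamma_+ \cap \Gamma_-$ established, I would apply Lemma \ref{quotient} with $N = \langle z \rangle$ to descend to a leaf system $(M, \hat M, \FF, C)$ on $\hat M = \Heis/\langle z \rangle$, whose deck group is $L/\langle z \rangle \cong \ZZ^2$. A short verification (again via the uniform structure and Lemma \ref{abelsub}) gives $C_+ L = C_- L = \hat M$, so Lemma \ref{lzd} supplies a hyperplane $P \subset \RR^2$ separating the images of $\Gamma_+$ and $\Gamma_-$. Identifying $\RR^2$ with the abelianization $\Heis/Z(\Heis)$, the hyperplane $P$ lifts to the kernel of a Lie group homomorphism $\pi : \Heis \to \RR$ whose level sets define $\FF_\pi$. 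The leaves of $\tilde \FF_\pi$ have the same ordering as $S$ by construction, so Proposition \ref{sameorder} shows $S$ lies within a bounded distance of some leaf of $\tilde \FF_\pi$. To promote this to almost alignment of the full foliations, I would use $\Gamma$-equivariance together with compactness of $M$ and the uniform structure hypothesis: $\Gamma$-translates of $S$ sit at the same bounded distance from corresponding translates of the chosen $\FF_\pi$-leaf, and any leaf of $\tilde \FF$ is approximated by a $\Gamma$-translate of $S$ up to a uniformly controlled error, yielding a global alignment constant $R$.
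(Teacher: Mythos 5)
Your overall scaffolding (leaf systems, Proposition \ref{nilorder}, Lemma \ref{quotient}, Lemma \ref{lzd}, Proposition \ref{sameorder}) is the right toolkit, but the central step of your plan is not only unproved, it is false, and the logic runs in the opposite direction from what actually works. You propose to show that the central generator satisfies $z \in \Gamma_+ \cap \Gamma_-$, i.e.\ $S_+z = S_+$ (hence $Sz = S$). There is no reason for any lifted leaf to be invariant under the center: for a foliation that is merely almost aligned with some $\FF_\pi$ (which is exactly the situation the theorem must cover), the translates $Sz^n$ typically form a strictly nested family confined to a bounded corridor around a plane $\{\pi = c\}$, so $Sz \ne S$ and at the same time $S_+\langle z\rangle \ne \Heis$. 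This also destroys your proposed contradiction: from $z \notin \Gamma_+\cap\Gamma_-$ you want to derive an Archimedean property for $z$, but the $z$-translates of $S$ do \emph{not} sweep out $\Heis$ in this (generic) case --- uniform structure and cocompactness give no such thing --- so Proposition \ref{nilorder} cannot be invoked. The correct dichotomy, and the one the paper uses, is on the saturation: \emph{either} $S_+Z = S_-Z = \tilde M$, in which case Lemma \ref{abelsub} gives $\Gamma = L$, the sweeping hypothesis yields the Archimedean property, and Proposition \ref{nilorder} forces the ordering to be trivial, contradicting $S_+Z=\tilde M$; \emph{or} (the case that actually occurs) $S_+Z \ne \tilde M$, and one applies Lemma \ref{quotient} to the saturation $S_+Z$, working thereafter with a boundary leaf $C$ of the quotient $\hat M = \tilde M/Z$, not with $S$ itself. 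Your first claim, that totality $\Gamma_+\cup\Gamma_- = L$ follows merely because leaves are disjoint or equal, has the same defect: a disjoint translate $\gamma S$ can face the ``wrong way,'' giving $S_+\gamma \supset S_-$ and hence neither $S_+\gamma \subset S_+$ nor $S_+\gamma \supset S_+$; in the paper $\Gamma = L$ is only ever obtained under the extra saturation hypotheses of Lemma \ref{abelsub} or Lemma \ref{lzd}.

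Two further steps are glossed over in a way that matters. In the quotient, $C_+\hat L = C_-\hat L = \hat M$ is not a ``short verification'': it can fail for the chosen boundary leaf, and the paper handles this by showing that the boundary of $C_+\hat L$ projects to compact leaves, ruling out spheres by Reeb stability, and replacing $C$ by a lift of a $\pi_1$-injective torus leaf, for which the saturation condition does hold. Finally, your promotion from ``$S$ is at bounded distance from a leaf of $\tFF_\pi$'' to almost alignment of the whole foliation is not achieved by approximating an arbitrary leaf by $\Gamma$-translates of $S$ (an arbitrary leaf need not be close to any translate of $S$); the paper instead traps every leaf meeting a fixed fundamental domain between two translates $C_+\alpha$ and $C_-\beta$, using the absence of topological crossings, and then spreads the resulting bound over $\hat L$-translates. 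As written, your argument would prove leaf-invariance and a sweeping property that generic Reebless foliations on $\Heis/\Gamma$ simply do not have, so the proof does not go through without restructuring it along the saturation dichotomy.
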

\begin{proof}
    Without loss of generality, we may replace $M$ by a finite cover,
    such that the fundamental group of $M$ is exactly the group $\Gamma$ given
    in Proposition \ref{nilorder}.
    As $\FF$ is Reebless, there are no closed cycles transverse to
    the lifted foliation $\tilde \FF$.
    Fix a lifted leaf $S$.
    This defines a leaf system $(M, \tilde M, \FF, S)$.
    As before, let $L$ denote the set of deck transformations on $\tilde M$.
    Let $Z \subset L$ be the center of this group.

    We first assume that $S_+ Z = S_- Z = \tilde M$ and show this leads to a
    contradiction.  Under this assumption, pick any $\gamma \in L$ and define
    $A_\gamma$ as the smallest subgroup of $L$ containing $Z$ and $\gamma$.  As
    this group is abelian and $S_+ A_\gamma = S_- A_\gamma = \tilde M$,
    Lemma \ref{abelsub} implies that $A_\gamma \subset \Gamma$.  As $\gamma$ was
    chosen arbitrarily, this shows that $\Gamma=L$.
    
    The center $Z$ is cyclic, and we may take a generator $z$ such that $S_+
    \subset S_+ z$.
    Consider $\gamma \in L$ and take points $x \in
    S_+ \gamma$ and $y \in \tilde M \setminus S_+ \gamma$.
    By the assumption $S_+ Z = S_- Z = \tilde M$,
    there are integers $i < j$ such that $x \in \tilde M \setminus S_+ z^i$
    and $y \in S_+ z^j$.  Using that $L = \Gamma = \Gamma_+ \cup \Gamma_-$,
    \[
        S_+ z^i \subset S_+ \gamma \subset S_+ z^j.
    \]
    Hence, there is $k \in \bbZ$, $i  \le  k < j$ such that
    \[
        S_+ z^k \subset S_+ \gamma \subset S_+ z^{k+1}.
    \]
    Define a relation ``$ \le $'' on $\Gamma = L$ by $\alpha  \le  \beta$ if
    $S_+ \alpha \subset S_+ \beta$.
    This relation satisfies the hypotheses of Proposition \ref{nilorder} and is
    therefore trivial: $S_+ \gamma = S_+$ for all $\gamma \in \Gamma$.  This
    contradicts the assumption $S_+ Z = \tilde M$.

    We have reduced to the case where either $S_+ Z  \ne  \tilde M$ or
    $S_- Z  \ne  \tilde M$.  Without loss of generality, assume the first.
    Consider now a new leaf system $(M, \hat M, \FF, C)$
    given by Lemma \ref{quotient}.  The group of deck transformations
    $\hat L = L / Z$ is isomorphic to $\bbZ^2$.

    First consider the case $C_+ \hat L  \ne  \hat M$.  The boundary of $C_+ \hat L$
    quotients to a union of compact leaves of $M$.  Let $T$ be such one
    such leaf.
    As $\FF$ is Reebless, the embedding $T \hookrightarrow M$ is
    $\pi_1$-injective, and this implies that $T$ is either
    a 2-sphere, or a 2-torus.
    The Reeb stability theorem rules out the
    possibility of a sphere.
    Then, as a $\pi_1$-injective torus, $T$ lifts to a cylinder
    $\hat T \subset \hat L$ and one can verify
    $\hat T_+ \hat L =  \hat T_- \hat L = \hat M$.
    Therefore, up to replacing the leaf $C$, we may freely assume that
    $C_+ \hat L =  C_- \hat L = \hat M$.

    %If $C_+ \hat L  \ne  \hat M$, then its boundary would quotients down to a union
    %of compact leaves on $M$, a contradiction.
    %Therefore, $C_+ \hat L = \hat M$ and, by
    %the same logic, $C_- \hat L = \hat M$.
    Applying Lemma \ref{lzd} and Proposition \ref{sameorder}, the leaf $C$ is a
    finite distance away from a surface $\hat P$ which is the
    pre-image by the projection $\hat M \to \bbR^2$ of a geometric line on
    $\bbR^2$.

    Fix a fundamental domain $K$ of the covering $\hat M \to M$.
    Since $C_+ \hat L = C_- \hat L = \hat M$, $K$ must lie inside a set
    $U := C_+ \alpha \cap C_- \beta$ for some $\alpha, \beta \in
    \hat L$.  Further, there is $D > 0$ such that every point of $U$ is at
    distance at most $D$ from $\hat P$.
    Since there are no topological crossings, any leaf $C'$ of $\hat \FF$ passing
    though $K$ must lie in the closure of $U$ and so
    $C'$ lies at most a distance $D$ away from $\hat P$.
    By considering translates $C' \gamma$ for $\gamma \in \hat L$, we
    can show that every leaf of $\hat \FF$ lies at most a distance $D$ away from
    some translate $\hat P \gamma$ of $\hat P$.
    There is a Lie group homomorphism $\pi:\tilde M\to\bbR$ such that each
    translated surface $\hat P \gamma$ lifts to a leaf of $\tFF_\pi$.
    Then, lifting leaves of $\hat \FF$ to leaves of $\tilde \FF$, the result is
    proved.
\end{proof}

\section{Classification of foliations in torus bundles over the circle} %{{{1
\label{Section-ClassificationofFoliations}

In this section we give a classification result of foliations of 3-dimensional manifolds which are torus bundles over the circle $S^1$. This result applied to mapping torus of Anosov automorphisms will be also used in \cite{HP}. 

We will classify Reebless foliations of such manifolds under the relation of being almost aligned with some model foliation. In certain cases, namely, when there are no torus leaves, we will be able to obtain a stronger relation.  

Two branching foliations $\FF_1$ and $\FF_2$ are \emph{almost parallel} if there exists $R>0$ such that:

\begin{itemize}
\item[-] For every leaf $L_1 \in \tFF_1$ there exists a leaf $L_2 \in \tFF_2$ such that $L_1 \en B_R(L_2)$ and $L_2 \en B_R(L_1)$ (i.e. the Hausdorff distance between $L_1$ and $L_2$ is smaller than $R$).
\item[-] For every leaf $L_2 \in \tFF_2$ there exists a leaf $L_1 \in \tFF_1$ such that $L_1 \en B_R(L_2)$ and $L_2 \en B_R(L_1)$.
\end{itemize}

Foliations $\cF_1,\cF_2$ and $\cF_4$ in Figure 1 are almost parallel to each other, however, $\cF_3$ is not almost parallel to them. 

We state the definition for branching foliations since in fact, the results of \cite{BI} give that the branching foliations they construct are almost parallel to some Reebless foliation. Moreover, we have the following:

\begin{prop}\label{Prop-PropertiesAlmostParallel} The following properties are verified:
\begin{itemize}
\item[(i)] Being almost parallel is an equivalence relation among branching foliations. 
\item[(ii)] If $\cW$ is almost aligned with $\cW'$
a foliation in $M$ and $\varphi$ a diffeomorphism of $M$ isotopic
to the identity, then $\varphi(\cW)$ is almost parallel to $\cW$ and almost aligned to $\cW'$.
\end{itemize}
\end{prop}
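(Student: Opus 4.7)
\textbf{Proof plan for Proposition \ref{Prop-PropertiesAlmostParallel}.}

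The plan is to verify the three axioms of an equivalence relation directly from the definition for item (i), and then to deduce item (ii) from a standard bounded-displacement lift of an isotopy together with transitivity of the almost-aligned relation that was noted just after its definition in Section \ref{SectionBranching}.

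For item (i), I would first observe that reflexivity is immediate (take $R=0$) and that symmetry is built into the definition, since the two bullet points become each other under swapping the roles of $\FF_1$ and $\FF_2$. The only actual content is transitivity. Given constants $R$ and $R'$ witnessing that $\FF_1,\FF_2$ and $\FF_2,\FF_3$ are almost parallel, and any leaf $L_1\in\tFF_1$, I would pick $L_2\in\tFF_2$ with Hausdorff distance at most $R$ from $L_1$, and then pick $L_3\in\tFF_3$ with Hausdorff distance at most $R'$ from $L_2$. Because the Hausdorff pseudometric on closed subsets of $\tM$ satisfies the triangle inequality, $L_3$ then sits at Hausdorff distance at most $R+R'$ from $L_1$. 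The symmetric argument starting from a leaf of $\tFF_3$ yields the same constant in the other direction, so $\FF_1$ and $\FF_3$ are almost parallel with constant $R+R'$.

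For item (ii), the essential input is the standard fact that any diffeomorphism $\varphi$ isotopic to the identity admits a lift $\tilde\varphi:\tM\to\tM$ at uniformly bounded distance from $\id_{\tM}$. To produce this, I would fix an isotopy $\{\varphi_t\}_{t\in[0,1]}$ from $\id$ to $\varphi$ and lift it starting from $\id_{\tM}$ to obtain $\tilde\varphi_t$; for each $\tilde x\in\tM$, the path $t\mapsto \tilde\varphi_t(\tilde x)$ projects isometrically to $t\mapsto\varphi_t(x)$ in $M$, and compactness of $M\times[0,1]$ bounds the length of all such paths by some uniform $K>0$. This gives $d_{\tM}(\tilde x,\tilde\varphi(\tilde x))\le K$ for every $\tilde x$. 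The lift $\tilde\varphi$ carries leaves of $\tilde\cW$ bijectively to leaves of the lift of $\varphi(\cW)$, and the displacement bound immediately gives $d_H(L,\tilde\varphi(L))\le K$ for each leaf $L$ of $\tilde\cW$. Both bullets in the definition of almost parallel then hold with constant $K$, showing $\varphi(\cW)$ is almost parallel to $\cW$. Since almost-parallelism implies almost-alignment in either direction, and the almost-aligned relation is transitive, combining this with the hypothesis that $\cW$ is almost aligned to $\cW'$ yields that $\varphi(\cW)$ is almost aligned to $\cW'$.

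The main (minor) obstacle is the bounded-displacement property of the lifted isotopy; it is standard but genuinely uses compactness of $M$ and the fact that covering projections are local isometries. Everything else is a formal manipulation of Hausdorff distance, the triangle inequality, and the previously established transitivity of the almost-aligned relation.
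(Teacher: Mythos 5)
Your proposal is correct and follows essentially the same route as the paper, whose proof is simply the remark that (i) is the triangle inequality (for Hausdorff distance between lifted leaves) and (ii) follows because a lift of a diffeomorphism isotopic to the identity stays at bounded distance from the identity on the universal cover; you have merely supplied the standard details (lifting the isotopy, compactness giving the uniform bound, and transitivity of the almost-aligned relation).
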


\dem Property (i) follows from the triangle inequality. Properties
(ii) follows from the fact that the map in the universal
cover is at bounded distance from the identity.
\lqqd

For $C^2$-foliations, Plante (see \cite{Plante}) gave a
classification of foliations without torus leaves in
$3$-dimensional manifolds with almost solvable fundamental group.
His proof relies on the application of a result from
\cite{Roussarie} which uses the $C^2$-hypothesis in an important
way (other results which used the $C^2$-hypothesis such as
Novikov's Theorem are now well known to work for $C^0$-foliations
thanks to \cite{Solodov}). We shall use a recent result of Gabai
\cite{Gabai} which plays the role of Roussarie's result and allows
the argument  of Plante to be recovered.

We state now a consequence of Theorem 2.7 of \cite{Gabai} which
will serve our purposes\footnote{Notice that a foliation of a
$3$-dimensional manifold without closed leaves is \emph{taut}, see
\cite{Calegari} Chapter 4 for definitions and these results.}:

\begin{thm}\label{Teo-Gabai} Let $\cF$ be a foliation of a 3-dimensional manifold
$M$ without closed leaves and let $T$ be an embedded
two-dimensional torus whose fundamental group injects in the one
of $M$, then, $T$ is isotopic to a torus which is transverse to
$\cF$.
\end{thm}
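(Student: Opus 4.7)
The plan is to follow the classical Roussarie--Thurston strategy for putting $\pi_1$-injective surfaces in normal form with respect to a foliation, adapted to the $C^0$ setting. First I would perturb the inclusion $T \hookrightarrow M$ into general position with respect to $\cF$. In the $C^2$ category this is standard Haefliger-type transversality: the trace $\cF \cap T$ becomes a singular foliation on $T$ with only Morse tangencies (centers and saddles). For $C^0$ foliations this step is the main technical issue; one would appeal to branched surface / essential lamination machinery (à la Gabai--Oertel) or, alternatively, smoothly approximate $\cF$ away from a thin set and handle the remainder via the tools Solodov uses to extend Novikov's theorem to $C^0$ foliations.

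Next I would rule out center tangencies. At a center, the trace foliation on $T$ contains a family of concentric circles bounding small disks in $T$. Each such circle lies in a leaf of $\cF$ and is null-homotopic in $T$, hence in $M$; by Novikov's theorem (valid in $C^0$ by Solodov), a null-homotopic loop in a leaf bounds a disk in that leaf. The disk in $T$ together with the disk in the leaf would produce an $S^2$-leaf or enclose a Reeb component, contradicting the absence of closed leaves. So all center tangencies can be removed by local isotopy, and by $\chi(T)=0$ the number of remaining saddles is also forced to be zero.

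Then I would eliminate the saddle tangencies. After the center cancellations, any remaining closed trace curve that is null-homotopic in $T$ would bound a disk in a leaf of $\cF$ by $\pi_1$-injectivity of $T$ combined with Novikov, and such a disk can be pushed off by an innermost-disk isotopy. What remains are trace curves that are essential in $T$; a holonomy / Reeb stability argument along the transverse direction of $\cF$ then lets one isotope $T$ across a neighborhood of each remaining saddle, eliminating it. Iterating this finitely many times produces a surface whose characteristic foliation has no singularities and no inessential closed leaves, i.e.\ one which is transverse to $\cF$.

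The main obstacle is precisely the $C^0$ hypothesis: all the standard tools (transversality, Morse normal form on $T$, cancellation of tangencies by local isotopy, integration of gradient-like vector fields) are smooth-category results, and adapting them is the whole content of Gabai's Theorem 2.7. I would expect the hardest single step to be ensuring that each cancellation move can be realized as an ambient isotopy of $T$ in $M$ rather than merely a homotopy, since in $C^0$ regularity one cannot flow along a gradient of $\cF$; this forces one to work with combinatorial / branched-surface models throughout and to verify carefully that the resulting piecewise isotopies assemble into a genuine ambient isotopy of $T$.
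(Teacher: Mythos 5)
Your outline is essentially a plan to reprove Gabai's theorem rather than a proof of the statement, and the crucial step is left as an acknowledged gap. The paper does not attempt the Roussarie--Thurston general-position argument in the $C^0$ category at all: it observes that a foliation of a closed $3$-manifold without closed leaves is taut, quotes Gabai's Theorem 2.7 as a black box (an incompressible closed surface can be homotoped to be a leaf or to meet $\cF$ only in isolated saddle tangencies), uses $\chi(T)=0$ to rule out saddles --- exactly the index count you also invoke --- excludes the leaf case by the no-closed-leaves hypothesis, and then remarks that since $T$ is \emph{embedded}, Lemma 2.6 of Gabai (cf.\ Calegari, Lemma 5.11) can be carried out by isotopies rather than homotopies. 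Your proposal identifies the same index argument and correctly locates the difficulty in the $C^0$ regularity, but at that point it only gestures at ``branched surface / essential lamination machinery'' or ``smoothly approximating $\cF$ away from a thin set,'' without carrying either out. The second alternative is not viable: $C^0$ foliations on $3$-manifolds cannot in general be approximated by $C^2$ ones in a way that preserves the needed structure, which is precisely why the paper replaces Roussarie's theorem (used by Plante in the $C^2$ setting) by Gabai's result. So the core of the statement is neither proved nor reduced to a precise citation in your write-up.

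There are also smaller inaccuracies in the smooth-category part of your sketch. Centers cannot in general be removed by a \emph{local} isotopy; they cancel against saddles along separatrices (Thurston's general position), and your conclusion that the sphere formed from the disk in $T$ and the disk in the leaf ``produces an $S^2$-leaf or encloses a Reeb component'' does not follow as stated --- the standard argument isotopes across the ball it bounds. Moreover, once all centers are gone the index count already forces zero saddles, so your subsequent paragraph on eliminating saddles by holonomy is superfluous and, taken literally (removing a lone saddle), would violate the Poincar\'e--Hopf count. None of this is fatal to the strategy, but combined with the unproved $C^0$ step it means the argument as written does not establish the theorem; the efficient route is the paper's: reduce to Gabai's Theorem 2.7 plus the Euler characteristic observation and the embedded-surface upgrade from homotopy to isotopy.
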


On the one hand, Gabai proves that a closed incompressible surface
is homotopic to a surface which is either a leaf of $\cF$ or
intersects $\cF$ only in isolated saddle tangencies. Since the
torus has zero Euler characteristic, this implies that it must be
transverse to $\cF$. We remark that Gabai's result is
stated by the existence of a homotopy, and this must be so since
Gabai starts with an \emph{immersed} surface, however, it can be
seen that Lemma 2.6 of \cite{Gabai} can be done by isotopies if
the initial surface is embedded. The rest of the proof uses only
isotopies. See also \cite{Calegari} Lemma 5.11 and the Remark
after Corollary 5.13.

In view of this result and in order to classify foliations in torus bundles over the circle it is natural to look at foliations of $\TT^2 \times [0,1]$. By considering a gluing of the boundaries by the identity map, we get a foliation of $\TT^3$. These foliations (in the $C^0$-case) were classified in \cite{Pot}, and the result can be restated in the terms used in this paper as follows: 

\begin{thm}[Theorem 5.4 and Proposition 5.7 of \cite{Pot}]\label{FoliationT3} Let $\cW$ be a Reebless foliation of $\TT^3$, then, $\cW$ is almost aligned with a linear foliation of $\TT^3$. Moreover, if the linear foliation is not a foliation by tori, then $\cW$ is almost parallel to the linear foliation and if it is a foliation by tori then there is at least one torus leaf.
\end{thm}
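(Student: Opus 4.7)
The plan is to lift $\cW$ to $\tilde\cW$ on $\RR^3$ and use the commutativity of the $\ZZ^3$-deck action to produce a $2$-plane that records each leaf's asymptotic direction. Since $\cW$ is Reebless, the $C^0$ Novikov theorem (Solodov) ensures each leaf of $\tilde\cW$ is a properly embedded plane separating $\RR^3$. A fixed lifted leaf $S$ therefore defines a leaf system $(\TT^3,\RR^3,\cW,S)$ in the sense of Appendix \ref{sectionLattices}, with $L=\ZZ^3$ and with $\Gamma_\pm,\Gamma$ determined by the transverse orientation of the two half-spaces $S_\pm$ of $\RR^3\setminus S$.

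Since $L=\ZZ^3$ is abelian, one has the dichotomy $S_+\ZZ^3=S_-\ZZ^3=\RR^3$ or not. In the first case, Lemma \ref{abelsub} gives $\Gamma=L$ and Lemma \ref{lzd} produces a linear $2$-plane $P\subset\RR^3$ through the origin whose positive and negative closed half-spaces contain $\Gamma_+$ and $\Gamma_-$ respectively; Proposition \ref{sameorder} then bounds the Hausdorff distance from $S$ to $P$. The same bound propagates from $S$ to every leaf of $\tilde\cW$ by applying the deck action, yielding almost alignment of $\cW$ with the linear foliation $\FF_V$ for $V=P^\perp$. If instead $S_+\ZZ^3$ is a proper open $\ZZ^3$-invariant subset, its boundary is $\ZZ^3$-invariant and descends to a non-empty compact sublamination of $\cW$; by Reeblessness these leaves are $\pi_1$-injective closed surfaces, Reeb stability excludes spheres, so each is an incompressible torus. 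By Theorem \ref{Teo-Gabai} each such torus is isotopic to an affine rational $2$-torus in $\TT^3$, providing both the linear model (foliation by parallel tori) and a torus leaf of $\cW$.

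For the refinement, when $\FF_V$ is not a foliation by tori the direction $V^\perp$ is irrational and the $\ZZ^3$-orbit of any affine translate of $P$ is dense in $\TT^3$; combined with the uniform alignment bound, this forces every leaf of $\tilde\FF_V$ to lie in a bounded neighborhood of some deck translate of $S$, giving almost parallelism. When $\FF_V$ is by tori, the second subcase above already produces a torus leaf; otherwise (first subcase with rational $V^\perp$), the alignment bound together with rationality of $V^\perp$ makes $P\cap\ZZ^3$ a rank-$2$ sublattice acting co-compactly on a strip around $S$, and a further compactness argument forces some leaf of $\cW$ to be closed, again yielding a torus leaf. The main obstacle is extracting a genuine embedded incompressible torus from $\partial(S_+\ZZ^3)$ when leaves are only $C^{1,0+}$; this is exactly where Theorem \ref{Teo-Gabai} and the $3$-manifold-topological arguments of \cite{Pot} are required.
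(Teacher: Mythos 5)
Your overall strategy is to re-run the Brin--Burago--Ivanov leaf-system machinery of Appendix \ref{sectionLattices} with $L=\ZZ^3$; note that the paper itself offers no proof of Theorem \ref{FoliationT3} --- it is quoted from Theorem 5.4 and Proposition 5.7 of \cite{Pot} and is even used as an input in Appendix \ref{Section-ClassificationofFoliations} --- so the only in-paper analogue of your route is the nilmanifold argument of Appendix \ref{sectionLattices}. In the case $S_+\ZZ^3=S_-\ZZ^3=\RR^3$ your outline is essentially sound: Lemmas \ref{abelsub}, \ref{lzd} and Proposition \ref{sameorder} do put the chosen leaf $S$ at finite distance from a linear plane. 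But ``propagating the bound by the deck action'' only controls the translates $S\gamma$, not arbitrary leaves; you need the trapping step used at the end of Appendix \ref{sectionLattices} (a leaf meeting a fixed fundamental domain lies in the closure of $S_+\alpha\cap S_-\beta$ for fixed $\alpha,\beta$, hence in a bounded slab). That is routine, but it should be said.

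The genuine gaps are in the other case and in the ``moreover'' clause. First, when $S_+\ZZ^3\neq\RR^3$ you assert that the boundary of the saturation descends to \emph{closed} leaves: Reeblessness gives $\pi_1$-injectivity of leaves, not compactness, and a compact $\ZZ^3$-invariant sublamination of $\TT^3$ can consist entirely of non-compact (dense or exceptional) leaves; no argument for compactness is given, and this is exactly the delicate point. Moreover, Theorem \ref{Teo-Gabai} as stated isotopes an incompressible torus to be \emph{transverse} to a foliation without closed leaves; it does not straighten a torus leaf to a linear torus (that is a separate, standard $3$-manifold fact), so it cannot play the role you assign to it. Second, even granting a torus leaf, you give no argument that the whole foliation is then almost aligned with the foliation by parallel tori; one must cut along the (linearizable) torus leaf and trap all remaining lifted leaves in the resulting slabs. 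Third, the ``moreover'' statement is where the substance of Proposition 5.7 of \cite{Pot} lies, and your sketch does not reach it: density of the planes $P+\gamma$ together with the alignment bound yields only the one-sided inclusion (leaves lie in bounded neighborhoods of planes), whereas almost parallelism requires the two-sided Hausdorff estimate; you must rule out, for \emph{every} leaf, the ``half-slab'' configuration in which one complementary component of a lifted leaf is entirely contained in a bounded neighborhood of the plane. Likewise, in the rational (toral) direction, the claim that ``a further compactness argument forces some leaf to be closed'' is precisely the nontrivial assertion to be proved, not a remark. As written, the proposal establishes (modulo the trapping step) only the almost-alignment statement in the full-saturation case.
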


A \emph{linear foliation} of $\TT^3$ is the projection by the natural projection $p: \RR^3 \to \RR^3/\ZZ^3 \cong \TT^3$ of a linear foliation of $\RR^3$. It is a foliation by tori if the linear foliation is given by a plane generated by two vectors in $\ZZ^3$. The same classification can be done for one-dimensional foliations of $\TT^2$ for which the proof is easier (see for example section 4.A of \cite{Pot2}). This allows us to classify foliations of $\TT^2 \times [0,1]$ transverse to the boundary: 

\begin{prop}\label{Prop-FolT2int} Let $\cW$ be a foliation of $\TT^2 \times [0,1]$ which is transverse to the boundary and has no torus leaves. Then, the foliation $\cW$ is almost aligned with a foliation of the form $\cL \times [0,1]$ where $\cL$ is a linear foliation of $\TT^2$. If $\cL$ is not a foliation by circles, then $\cW$ is almost parallel to $\cL \times [0,1]$. 
\end{prop}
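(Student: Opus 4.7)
The plan is to reduce Proposition~\ref{Prop-FolT2int} to Theorem~\ref{FoliationT3} via a doubling construction. Let $DM = M \cup_{\partial M} M'$ be the double of $M = \TT^2 \times [0,1]$, where $M'$ is a reflected copy of $M$ glued along $\partial M$ by the identity. Then $DM \cong \TT^2 \times S^1 \cong \TT^3$ carries a natural involution $\sigma$ that swaps the two copies of $M$ and fixes $\partial M$ pointwise; since $\cW$ is transverse to $\partial M$, the two reflected copies of $\cW$ glue along $\partial M$ to a $\sigma$-invariant $C^{1,0+}$ foliation $D\cW$ on $DM$ which is transverse to the image of $\partial M$.

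First I would verify that $D\cW$ is Reebless. By Novikov's theorem, the boundary of any Reeb component of $D\cW$ is a torus leaf $T$. If $T$ lies entirely in the interior of one of the two copies of $M$, then $T$ is a torus leaf of $\cW$, contradicting the hypothesis. If $T$ meets the gluing locus $\partial M$, then the intersection $T \cap M$ is a union of compact annulus leaves of $\cW$ whose boundary circles are closed orbits of the induced one-dimensional foliations on $\partial M$; the $\sigma$-invariant structure together with transversality of $D\cW$ to $\partial M$ forces a Reeb solid torus bounded by $T$ to intersect $\partial M$ in a configuration incompatible with the foliated product structure of a Reeb component. Hence $D\cW$ is Reebless, and Theorem~\ref{FoliationT3} produces a linear foliation $\cL_{DM}$ of $\TT^3$, associated to a plane $P \subset \RR^3$, with which $D\cW$ is almost aligned.

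Next, I would use the $\sigma$-symmetry to pin down $P$. Because $D\cW$ is $\sigma$-invariant, it is also almost aligned with $\sigma_* \cL_{DM}$; and for any Reebless foliation of $\TT^3$ the asymptotic plane is unique (two non-parallel planes in $\RR^3$ yield families of parallel translates that diverge linearly, so a single simply connected leaf cannot lie in bounded neighborhoods of both). Hence $\sigma_* \cL_{DM} = \cL_{DM}$, and $P$ is invariant under the reflection $(x,y,z) \mapsto (x,y,-z)$. The only such planes are horizontal, $P = \{z=0\}$, or vertical, $P = \{ax + by = 0\}$. The horizontal case is excluded by the no-torus-leaves hypothesis: Theorem~\ref{FoliationT3} would then produce a torus leaf of $D\cW$ at bounded Hausdorff distance from some $\TT^2 \times \{z_0\}$; for $z_0$ interior to a copy of $M$ this leaf lies in $M$ and is a torus leaf of $\cW$, a contradiction, while the values $z_0 \in \{0,1\}$ are incompatible with transversality of $D\cW$ to $\partial M$. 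Thus $\cL_{DM} = \cL \times S^1$ for a linear foliation $\cL$ of $\TT^2$.

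Restricting the almost alignment from $DM$ to $M \subset DM$ yields that $\cW$ is almost aligned with $\cL \times [0,1]$. For the last assertion, if $\cL$ is not a foliation by circles, then $\cL_{DM}$ is not a foliation by tori, and the second part of Theorem~\ref{FoliationT3} upgrades almost alignment to almost parallelism for the pair $(D\cW, \cL_{DM})$; restriction to $M$ gives that $\cW$ is almost parallel to $\cL \times [0,1]$. The main obstacle I anticipate is the Reeblessness check in the first step, where ruling out Reeb components of $D\cW$ whose boundary tori straddle the gluing locus $\partial M$ requires a careful local analysis near $\partial M$ using the transverse and $\sigma$-equivariant structure; once this is handled, the remainder of the argument is a direct combination of Theorem~\ref{FoliationT3} with the symmetry constraint.
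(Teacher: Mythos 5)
Your strategy is the paper's own: mirror-double $(\TT^2\times[0,1],\cW)$ to a foliation of $\TT^3$ and apply Theorem \ref{FoliationT3}; the paper phrases the double as gluing $\cW$ to $\varphi_1(\cW)$ on $\TT^2\times[0,2]$ and identifying the ends, which is exactly your $D\cW$ with your involution $\sigma$. One step you add is not needed: the paper never argues that the asymptotic plane $P$ is reflection-invariant, because inside the slab $\RR^2\times[0,1]$ (bounded width) any linear foliation transverse to the horizontal tori is at bounded Hausdorff distance from the product foliation $\cL\times[0,1]$, where $\cL$ is the linear foliation of $\TT^2$ in the direction $P\cap\{z=0\}$; so ``$P$ not horizontal'' already gives the stated conclusion, without pinning $P$ to be vertical. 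Your symmetry/uniqueness argument is a valid alternative, just longer.

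The two places where you go beyond the paper's (very terse) write-up are, however, exactly where your text is not yet a proof. First, Reeblessness of $D\cW$: you rightly flag that Theorem \ref{FoliationT3} requires it, but the sketch (``incompatible with the foliated product structure of a Reeb component'') is not an argument; note that a Reeb component does admit properly embedded compact annuli transverse to its foliation with boundary on the boundary leaf (in the standard model on $\{x^2+y^2\le 1\}\times S^1$ with leaves $z=f(r)+c$, the annulus $\{y=0\}$ is transverse and meets the boundary leaf in two longitudes), so no purely local analysis near $\partial M$ can rule out a Reeb solid torus straddling the gluing tori; a genuine argument must use the hypotheses on $\cW$ (e.g.\ it must exclude compact annular leaves of $\cW$ whose double is a compressible torus, since by Novikov a compressible leaf forces a Reeb component). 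Second, the exclusion of the horizontal plane: the inference ``the torus leaf is at bounded Hausdorff distance from $\TT^2\times\{z_0\}$ with $z_0$ interior, hence lies in $M$'' is a non sequitur — the constant from almost alignment may well exceed the width of the slab, and a torus leaf of $D\cW$ that crosses the gluing tori decomposes into compact leaves of $\cW$ with boundary (annuli), which the hypothesis ``no torus leaves'' does not directly forbid; so the straddling case still needs to be handled. To be fair, the paper asserts both of these steps without detail as well, but since you single them out, they are the points to actually nail down before the argument is complete.
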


\begin{proof} The proof can be done directly (see \cite{Roussarie,Plante} for the $C^2$-case). We will use Theorem \ref{FoliationT3} instead. Consider the foliation $\cW'$ of $\TT^2 \times [0,2]$ obtained by gluing $\TT^2 \times [0,1]$ with the foliation $\cW$ with $\TT^2 \times [1,2]$ with the foliation $\varphi(\cW)$ where $\varphi_1: \TT^2 \times [0,1] \to \TT^2 \times [1,2]$ is given by $\varphi_1(x,t)=(x,2-t)$. It is not hard to check that this gives rise to a well defined foliation $\cW'$ of $\TT^2 \times [0,2]$ (is like putting a mirror in the torus $\TT^2 \times \{1\}$). 

We can now construct a foliation of $\TT^3$ as follows: we glue $\TT^2 \times \{0\}$ with $\TT^2 \times \{2\}$ by the diffeomorphism $\varphi_2: \TT^2 \times \{0\} \to \TT^2 \times \{2\}$ given by $\varphi_2(x,0) = (x,2)$. Again, it is easy to show that the foliation can be defined in $\TT^3 = \TT^2 \times [0,2]/_{\varphi_2}$.

By the previous Theorem, we know that the resulting foliation is almost aligned with a linear foliation of $\TT^3$. Since we have assumed that there is no torus leaves of $\cW$ we know that this linear foliation cannot be the one given by the planes $\RR^2 \times \{t\}$ so, it must be transverse to the boundaries of $\TT^2 \times [0,1]$. This concludes the proof. 
\end{proof}

\begin{remark} As a consequence of the previous result we get the following: The foliations $\cW\cap (\TT^2 \times \{0\})$ and $\cW \cap (\TT^2 \times \{1\})$ are almost aligned with each other. In particular, one can prove that if in one of the boundary components is almost parallel to a linear foliation, then the whole foliation $\cW$ is almost parallel to a linear foliation of $\TT^2$ times $[0,1]$. 
\end{remark}

Consider the manifold $M_\psi$ obtained by $\TT^2 \times [0,1]$ by identifying $\TT^2 \times \{0\}$ with $\TT^2 \times \{1\}$ by a diffeomorphism $\psi$. Let $p: M_\psi \to S_1=[0,1]/_\sim$  given by the projection in the second coordinate. Any torus bundle over the circle can be constructed this way, naturally, if $\psi$ and $\psi'$ are isotopic then $M_\psi$ and $M_{\psi'}$ are diffeomorphic. 

The construction of $M_\psi$ determines a incompressible torus in $M_\psi$ which we will assume remains fixed. Under this choice of incompressible torus we can consider a family of foliations of $M_\psi$ transverse to such torus. 

We are now able to classify foliations in torus bundles over $S^1$ depending on the isotopy class of $\psi$. 

Any manifold of the form $\cH/\Gamma$ can be constructed as a torus bundle over $S^1$: The monodromy being given by (something isotopic to) the diffeomorphism $\psi_k: \RR^2/\ZZ^2 \to \RR^2/\ZZ^2$ given by:

$$  \psi_k( x) = \left(%
\begin{array}{cc}
  1 & k \\
  0 & 1 \\
\end{array}%
\right) x  \qquad  \mod \ZZ^2  $$

In the case that $\psi: \TT^2 \cong S^1 \times S^1 \to \TT^2$ is a Dehn-twist of the form:

$\psi(t,s)= (t, s+kt) (\mod \ZZ^2)$

\noindent $M_\psi$ is homeomorphic to a nilmanifold of the form $\cH/\Gamma$. We define the foliations $\cF_\theta$ on $M_\psi$ given by starting with the linear foliation $\cL$ of $\TT^2$ by circles of the form $\{t\} \times S^1$ and we consider the foliation $\cL \times [0,1]$ of $\TT^2 \times [0,1]$. The foliation $\cF_\theta$ will be the foliation obtained by gluing $\TT^2 \times \{0\}$ with $\TT^2 \times \{1\}$ by the diffeomorphism 

$$ \psi_\theta: \TT^2 \times \{0\} \to \TT^2 \times \{1\} \qquad \psi_\theta (t,s,0) = (t + \theta, s+ kt, 1) $$ 

\begin{remark} Notice that if $\cW$ is a foliation of $M_\psi$ which is transverse to $T$ the torus obtained by projection of $\TT^2 \times \{0\} \sim \TT^2 \times \{1\}$ we know that it must be invariant by a map of $T$ which is isotopic to $\psi$. 
\end{remark}

The foliation $\cF_\infty$ is the foliation by the fibers of the torus bundle. These foliations correspond to the foliations of the form $\cF_\pi$ in $\cH/\Gamma$ defined in section \ref{SectionNilm}. In particular, we know that they are pairwise not almost parallel.

\begin{thm}\label{Teo-ClassificationNilmanifolds} Let $\cW$ be a
codimension one Reebless foliation of $M_\psi$ where $\psi$ is a Dehn twist as above.
Then, $\cW$ is almost aligned with $\cF_\theta$ for some $\theta
\in \RR \cup \{\infty\}$. Moreover, if $\theta$ is irrational then $\cW$ is almost parallel to $\cF_\theta$. 
\end{thm}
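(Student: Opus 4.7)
The plan is to reduce to a foliation of $\TT^2\times[0,1]$ by cutting $M_\psi$ along the fiber torus, apply Proposition~\ref{Prop-FolT2int}, and use compatibility with $\psi$ to identify the model $\cF_\theta$. I split according to whether $\cW$ has a compact leaf. In the ``no compact leaf'' case, Theorem~\ref{Teo-Gabai} isotopes the fiber torus $T$ to be transverse to $\cW$, and cutting $M_\psi$ along $T$ produces $N\cong\TT^2\times[0,1]$ with $\cW':=\cW|_N$ transverse to $\partial N$ and without torus leaves. If $\cW$ has a compact leaf, Reeblessness makes it $\pi_1$-injective and hence a torus isotopic to the fiber (as $M_\psi$ has a unique isotopy class of incompressible torus for $\psi$ a Dehn twist); one can then replace $T$ by such a torus leaf, cut, and work pocket-by-pocket---if every leaf of $\cW$ is a fiber torus then $\cW=\cF_\infty$ and the theorem holds, and otherwise each pocket is handled as in the no-compact-leaf case after a small perturbation to render boundary-as-leaf transverse.

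Proposition~\ref{Prop-FolT2int} applied to $\cW'$ gives that $\cW'$ is almost aligned with $\cL\times[0,1]$ for some linear foliation $\cL$ of $\TT^2$. For the two boundary traces of $\cW'$ to reassemble under $\psi(t,s)=(t,s+kt)$ into a well-defined foliation of $M_\psi$, we need $\psi_*\cL=\cL$ up to almost-parallel equivalence. A direct computation shows that $\psi$ sends the linear foliation of slope $\alpha$ to the one of slope $\alpha+k$, so (since $k\neq 0$) the only linear foliation fixed by $\psi$ is the vertical one by circles $\{t\}\times S^1$. Hence $\cL$ is vertical, and following a leaf of $\cW'$ close to $\{t_0\}\times S^1\times[0,1]$ across $N$ arrives at the top boundary close to $\{t_1\}\times S^1$ for some $t_1$; the value $\theta:=(t_1-t_0)\bmod \ZZ$ is independent of the chosen leaf and identifies $\cW$ as almost aligned with $\cF_\theta$.

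For the almost-parallel upgrade when $\theta$ is irrational, the plan is to invoke the remark after Proposition~\ref{Prop-FolT2int}: one shows that the trace of $\cW$ on $\partial N$ is almost parallel (not just almost aligned) to the vertical foliation of $\TT^2$, after which the remark promotes the alignment on all of $N$ to almost-parallel, and reassembly via $\psi$ yields that $\cW$ is almost parallel to $\cF_\theta$. The boundary upgrade uses irrational-rotation dynamics: for irrational $\theta$, a single leaf of $\cF_\theta$ meets $\partial N$ in a dense orbit of $t\mapsto t+\theta$, forcing the trace of $\cW$ to distribute densely and ruling out any vertical circle of $\TT^2$ being far from all leaves of the trace (compare the classification of one-dimensional foliations of $\TT^2$ referenced in Section~4.A of \cite{Pot2}).

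The main obstacle I anticipate is the coherent treatment of the torus-leaf case, where $M_\psi$ may decompose into several pockets bounded by torus leaves of $\cW$ and one must verify that a single $\theta$ describes the whole foliation globally rather than piece-by-piece; a secondary technicality is the rigorous extraction of the ``dense trace'' property at $\partial N$ from almost-alignment with $\cF_\theta$ for irrational $\theta$, which requires translating the bounded-Hausdorff-distance hypothesis in $\Heis$ into an asymptotic density statement on $T$.
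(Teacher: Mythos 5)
Your transverse case follows the paper's route (isotope the fiber torus to be transverse via Theorem \ref{Teo-Gabai}, cut, apply Proposition \ref{Prop-FolT2int}, and use $\psi$-invariance to force the vertical linear foliation), but two of your steps have genuine gaps. The first is the compact-leaf case: it is false that $M_\psi$ has a unique isotopy class of incompressible torus when $\psi$ is a Dehn twist. Besides the fiber $\TT^2\times\{0\}$, every vertical torus (for instance the image of $\{t_0\}\times S^1\times[0,1]$, which is $\psi$-invariant since $\psi$ preserves the first coordinate) is incompressible, and such tori arise precisely as leaves of $\cF_\theta$ for rational $\theta$. So a torus leaf of $\cW$ need not be isotopic to the fiber; your dichotomy ``every leaf is a fiber torus, hence $\cW=\cF_\infty$'' misses, e.g., foliations whose compact leaves are all vertical tori (almost parallel to $\cF_0$); and one cannot ``perturb slightly'' to make a torus leaf transverse to the cutting torus without changing the foliation being classified --- Theorem \ref{Teo-Gabai} is only available in the absence of closed leaves. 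The paper sidesteps all of this by cutting along the torus leaf itself, doubling as in Proposition \ref{Prop-FolT2int} to obtain a foliation of $\TT^3$ tangent to the boundary tori, and invoking Theorem \ref{FoliationT3}.

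The second gap is the identification of $\theta$. The displacement $t_1-t_0$ read off from a single crossing of $N=\TT^2\times[0,1]$ is \emph{not} independent of the chosen leaf: the trace of $\cW$ on the fiber torus is only almost parallel (at best) to the vertical foliation, and the induced return map is an arbitrary circle homeomorphism isotopic to a rotation, whose pointwise displacement varies from leaf to leaf unless it is rigid. The invariant that determines the almost-alignment class is the \emph{rotation number} of this return dynamics, obtained asymptotically; this is exactly what the paper does by taking a circle leaf $L$ of the boundary trace and examining the relative order of the iterates $\psi^n(L)$. Relatedly, before any such argument you must rule out Reeb annuli in the induced foliation of the boundary torus (otherwise there need not be a circle leaf and the trace is only almost aligned with the vertical foliation); the paper excludes them by observing that the gluing would permute finitely many Reeb annuli, producing a periodic circle and hence a torus leaf, a contradiction in the transverse case. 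Your density argument for irrational $\theta$ both presupposes that $\theta$ has already been identified and does not address the rational case, which is needed even for the first assertion of the theorem.
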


\begin{proof} If $\cW$ has a torus leaf, this torus must be incompressible by Novikov's Theorem (\cite{Solodov, CandelConlon}). We can cut the foliation along this torus. By doing the same doubling procedure as in Proposition \ref{Prop-FolT2int} we obtain a foliation of $\TT^3$ and using Theorem \ref{FoliationT3} we deduce that $\cW$ is almost aligned with a foliation of the form $\cF_\theta$ with $\theta$ being irrational. 

If $\cW$ has no torus leaves, we can consider the torus $\TT^2 \times \{0\} \en M_\psi$ which is incompressible. Using Theorem \ref{Teo-Gabai} we can make an isotopy and assume that the foliation $\cW$ is transverse to this torus (recall from Proposition \ref{Prop-PropertiesAlmostParallel} that the isotopy does not affect the equivalence class of the foliation under the relation of being almost parallel). Here we are using the fact that the isotopy of the torus can be extended to a global isotopy of $M$ (see for example Theorem 8.1.3 of \cite{Hirsch}). 

We can cut $M_\psi$ by this torus and apply Proposition \ref{Prop-FolT2int} to obtain that $\cW$ in $\TT^2 \times [0,1]$ is almost aligned to a linear foliation of $\TT^2$ times $[0,1]$. In fact, if the foliation is not almost parallel to the linear foliation we deduce that the foliation in $\TT^2 \times \{0\}$ must have Reeb annuli (see section 4.A of \cite{Pot2}). Since the foliation in $\TT^2 \times \{0\}$ must be glued by $\psi$ with the foliation in $\TT^2 \times \{1\}$ we deduce that it must permute these Reeb annuli which are finitely many. So, we get that there is a periodic circle of the foliation of $\TT^2 \times \{0\}$ by $\psi$ which implies the existence of a torus leaf for $\cW$. We deduce that $\cW$ in $\TT^2 \times [0,1]$ is almost parallel to a linear foliation of $\TT^2$ times $[0,1]$.

Now, we must show that this linear foliation corresponds to the linear foliation $\cL$ by circles of the form $\{t\} \times S^1$ but this follows from the fact that the foliation is invariant by $\psi$. 

Now we must see that after gluing the foliation is almost parallel to some $\cF_\theta$. This follows from the following fact, since in the boundary it is almost parallel to the foliation $\cL$, we know that it has at least one circle leaf $L$. Now we obtain the value of $\theta$ by regarding the relative order of the images of $\psi^n(L)$ and performing a classical rotation number argument as in the circle. 
\end{proof}

When $\psi$ is isotopic to Anosov, the classification gives only three possibilities. 

We consider then $A$ a hyperbolic matrix in $SL(2,\ZZ)$ and in $M_A$ we consider the following linear foliations: $\cF^{cs}_A$ is given by the linear foliation which is the projection of $\cL^s \times [0,1]$ where $\cL^s$ is the linear foliation corresponding to the strong stable foliation of $A$ and similarly we obtain $\cF^{cu}_A$ as the projection of $\cL^u \times [0,1]$ where $\cL^u$ is the linear foliation which corresponds to the strong unstable foliation. 

Finally, we consider the foliation $\cF_T$ which is the projection of foliation by tori $\TT^2 \times \{t\}$ to $M_A$. Clearly, these 3 foliations are pairwise not almost parallel to each other. 

\begin{thm} Let $\cW$ be a Reebless foliation of $M_A$, then, $\cW$ is almost aligned to one of the foliations $\cF^{cs}_A, \cF^{cu}_A$ or $\cF_T$. Moreover, if $\cW$ has no torus leaves, then $\cW$ is almost parallel to either $\cF^{cs}_A, \cF^{cu}_A$ and if it is almost aligned with $\cF_T$ it has a torus leaf. 
\end{thm}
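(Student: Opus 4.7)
The plan is to closely mirror the strategy of Theorem \ref{Teo-ClassificationNilmanifolds}, using that $A$ is Anosov instead of a Dehn twist so that the only linear foliations of $\TT^2$ invariant under $A$ (up to almost alignment) are the stable and unstable foliations $\cL^s$ and $\cL^u$. The argument splits according to whether $\cW$ has a torus leaf.

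Suppose first that $\cW$ has no torus leaves. Theorem \ref{Teo-Gabai}, together with isotopy extension, allows one to deform $\cW$ within its almost-parallelism class (Proposition \ref{Prop-PropertiesAlmostParallel}) so that the fiber torus $T_0 = \TT^2 \times \{0\}$ becomes transverse to $\cW$. Cutting $M_A$ along $T_0$ yields a boundary-transverse foliation on $\TT^2 \times [0,1]$, and Proposition \ref{Prop-FolT2int}, together with the Reeb-annulus argument from Theorem \ref{Teo-ClassificationNilmanifolds} (applicable because $\cW$ has no compact leaves), gives that $\cW$ is almost parallel to $\cL \times [0,1]$ for some linear foliation $\cL$ of $\TT^2$. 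Since $M_A$ is recovered by gluing the boundary tori by $A$, the one-dimensional boundary foliation is simultaneously almost aligned with $\cL$ and with $A(\cL)$; a direct argument on $\RR^2$ comparing asymptotic directions of leaves then forces $A(\cL) = \cL$. Hyperbolicity of $A$ leaves only $\cL \in \{\cL^s, \cL^u\}$, so $\cW$ is almost parallel to $\cF^{cs}_A$ or to $\cF^{cu}_A$.

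Suppose next that $\cW$ has a torus leaf $T$. Novikov's theorem (in its $C^{1,0+}$ form due to Solodov) gives that $T$ is incompressible. Because $M_A$ is a Sol manifold, every incompressible torus is isotopic to a fiber of $M_A \to S^1$, so $T$ is isotopic to $T_0$; applying an ambient isotopy to $\cW$ (which preserves its almost-alignment class by Proposition \ref{Prop-PropertiesAlmostParallel}) I may assume $T = T_0$ is a leaf of $\cW$. Lifting to the infinite cyclic cover $\TT^2 \times \RR$, every copy of $\TT^2 \times \ZZ$ lies in the lifted foliation, so by the no-crossings axiom every other leaf of the lifted foliation is confined to a single slab $\TT^2 \times [n,n+1]$. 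Lifting once more to the universal cover yields a uniform bound on the projection of each leaf of the lifted $\cW$ onto the $\RR$-factor, which is exactly the definition of almost alignment with $\cF_T$.

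The second assertion in the \emph{moreover} clause is the contrapositive of Case 1: a foliation almost aligned with $\cF_T$ but having no torus leaves would, by Case 1, be almost parallel to $\cF^{cs}_A$ or $\cF^{cu}_A$, whose leaves are unbounded in the fiber direction and hence cannot be almost aligned with $\cF_T$, a contradiction. The main obstacle I expect is the rigidity step in Case 1: showing that almost alignment of the boundary foliation with both $\cL$ and $A(\cL)$ forces $\cL = A(\cL)$. Handling this requires a careful analysis, lifting to $\RR^2$ and using hyperbolicity of $A$, to conclude that the asymptotic direction of the leaves of the one-dimensional boundary foliation is uniquely determined and $A$-invariant.
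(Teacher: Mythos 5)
Your overall architecture is the same as the paper's (dichotomy on torus leaves; Gabai transversality, cutting along the fiber torus and applying Proposition \ref{Prop-FolT2int} in the no-torus case; uniqueness up to isotopy of the incompressible torus plus a trapping argument in the torus-leaf case), and both your torus-leaf case and the identification $A(\cL)=\cL$, hence $\cL\in\{\cL^s,\cL^u\}$, are sound. However, there is a genuine gap at the end of the no-torus-leaf case: from ``$\cW$ restricted to the cut-open $\TT^2\times[0,1]$ is almost parallel to $\cL\times[0,1]$'' you pass directly to ``$\cW$ is almost parallel to $\cF^{cs}_A$ or $\cF^{cu}_A$,'' and this does not follow without a further argument. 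In the universal cover a single leaf of $\tilde\cW$ traverses infinitely many fundamental slabs $\RR^2\times[n,n+1]$; the cut-open statement only says that each slab piece lies within a uniform distance of \emph{some} plane $\ell_n\times[n,n+1]$ with $\ell_n$ a stable line, and matching consecutive pieces along the common boundary torus only bounds the distance between $\ell_n$ and $\ell_{n+1}$. A priori these jumps can accumulate, so the leaf could drift linearly in the unstable direction and fail to stay within bounded distance of any single leaf of $\tilde\cF^{cs}_A$ --- note this threatens not only ``almost parallel'' but already the ``almost aligned'' conclusion of the main statement, and hence also your contrapositive argument for the last clause. This globalization is precisely what the paper's final sentence addresses: the gluing map is isotopic to $A$, hence semiconjugate to it, and this semiconjugacy (at bounded distance from the identity on the universal cover) is what controls the drift and shows the foliations remain almost parallel after gluing. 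Your proposal never confronts this step.

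By contrast, the step you single out as the main obstacle --- that the boundary foliation being almost aligned with both $\cL$ and $A(\cL)$ forces $A(\cL)=\cL$ --- is comparatively easy: lifted leaves of a one-dimensional foliation of $\TT^2$ are properly embedded, hence unbounded, curves in $\RR^2$, and the intersection of an $R$-neighborhood of a line in the direction of $\cL$ with an $R$-neighborhood of a line in a transverse direction $A(\cL)$ is a bounded set, so no leaf could lie in both. So the real work lies not where you placed it, but in the gluing/propagation argument above.
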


\begin{proof} The first part of the proof is as in the previous Theorem: If $\cW$ has a torus leaf, it must be isotopic to $T$ the projection of $\TT^2 \times \{0\}$ since it is the only incompressible embedded torus in $M_A$ and we get that we get that $\cW$ is almost parallel to $\cF_T$. 

Otherwise, we can assume that $\cW$ is transverse to $T$ and we obtain a foliation of $\TT^2 \times [0,1]$ which is almost aligned with a foliation of the form $\cL \times [0,1]$ and which in $T$ is invariant under a diffeomorphism $f$ isotopic to $A$.

This implies that the linear foliation $\cL$ is either the strong stable or the strong unstable foliation for $A$, and in particular, since it has no circle leaves, we get that $\cW$ in  $\TT^2 \times [0,1]$ is almost parallel to $\cL \times [0,1]$. 

Now, since the gluing map $f$ is isotopic to $A$, we know it is semiconjugated to it, so, we get that after gluing, the foliations remain almost parallel. 
\end{proof}

%TAMANHO DE LETRA

%\tiny  5   5
%\scriptsize    7   7
%\footnotesize  8   8
%\small 9   9
%\normalsize    10  10
%\large 12  12
%\Large 14  14.40
%\LARGE 18  17.28
%\huge  20  20.74
%\Huge  24  24.88

\end{document}